\g@addto@macro{\endabstract}{\@setabstract}
\newcommand{\authorfootnotes}{\renewcommand\thefootnote{\@fnsymbol\c@footnote}}%
\theoremstyle{plain}
\newtheorem{theorem}{Theorem}
\newtheorem{thm}{Theorem}[subsection]
\newtheorem{lem}[thm]{Lemma}
\newtheorem{prop}[thm]{Proposition}
\newtheorem{cor}[thm]{Corollary}
\newcommand\scalemath[2]{\scalebox{#1}{\mbox{\ensuremath{\displaystyle #2}}}}
\theoremstyle{plain}
\newtheorem{defn}[thm]{Definition}
\theoremstyle{remark}
\newtheorem{rem}[thm]{Remark}
\newcommand{\mycomment}[1]{}
\renewcommand{\tocsection}[3]{%
  \indentlabel{\@ifnotempty{#2}{\bfseries\ignorespaces#1 #2\quad}}\bfseries#3}
\renewcommand{\tocsubsection}[3]{%
  \indentlabel{\@ifnotempty{#2}{\ignorespaces#1 #2\quad}}#3}
\renewcommand{\tocsubsubsection}[3]{%
  \indentlabel{\@ifnotempty{#2}{\ignorespaces#1 #2\quad}}#3}
\newcommand\@dotsep{4.5}
\def\@tocline#1#2#3#4#5#6#7{\relax
  \ifnum #1>\c@tocdepth % then omit
  \else
    \par \addpenalty\@secpenalty\addvspace{#2}%
    \begingroup \hyphenpenalty\@M
    \@ifempty{#4}{%
      \@tempdima\csname r@tocindent\number#1\endcsname\relax
    }{%
      \@tempdima#4\relax
    }%
    \parindent\z@ \leftskip#3\relax \advance\leftskip\@tempdima\relax
    \rightskip\@pnumwidth plus1em \parfillskip-\@pnumwidth
    #5\leavevmode\hskip-\@tempdima{#6}\nobreak
    \leaders\hbox{$\m@th\mkern \@dotsep mu\hbox{.}\mkern \@dotsep mu$}\hfill
    \nobreak
    \hbox to\@pnumwidth{\@tocpagenum{\ifnum#1=1\bfseries\fi#7}}\par% <-- \bfseries for \section page
    \nobreak
    \endgroup
  \fi}
\renewcommand\csname r@tocindent0\endcsname{0pt}
\def\l@subsection{\@tocline{2}{0pt}{2.5pc}{5pc}{}}
\def\l@subsubsection{\@tocline{2}{0pt}{4pc}{5pc}{}}
\DeclareMathOperator{\GL}{GL}
\DeclareMathOperator{\ch}{ch}
\DeclareMathOperator{\vol}{vol}
\DeclareMathOperator{\GSp}{GSp}
\newcommand\blfootnote[1]{%
  \begingroup
  \renewcommand\thefootnote{}\footnote{#1}%
  \addtocounter{footnote}{-1}%
  \endgroup
}
\date{} % clear date
\begin{document}
\hypersetup{citecolor=blue}
\hypersetup{linkcolor=red}

\begin{center}
  \normalsize
 EULER SYSTEMS FOR $\mathrm{GSp}_4$ OVER IMAGINARY 
 QUADRATIC FIELDS
  \normalsize
  \\
  \bigskip
  Alexandros Groutides \par 
  
 Mathematics Institute, University of Warwick\par
\end{center}
\begin{abstract}
     We construct an Euler system attached to general-type cohomological cuspidal automorphic representations of $\GSp_4/\mathbf{Q}$ twisted by a Groessencharacter of an imaginary quadratic field. We then use this to bound strict Selmer groups under standard hypotheses. In addition, our approach gives a way of extending the $\GSp_4\times\GL_2/\mathbf{Q}$ Euler system of Hsu-Jin-Sakamoto to a motivic statement which also covers certain small weights omitted in \textit{op.cit.}. 
\end{abstract}
\blfootnote{We gratefully acknowledge support from the following research grant: ERC Grant No. 101001051—Shimura varieties and the Birch–Swinnerton-Dyer conjecture.}
\vspace{-2em}
\section{Introduction}
The theory of Euler systems is one of the most effective methods for studying the arithmetic of global Galois representations. However, constructing Euler systems is a difficult problem. Despite this, there has been significant progress within the last decade, including but not limited to \cite{Lei_2014}, \cite{Lei_Loeffler_Zerbes_2015}, \cite{hsu2020eulersystemsmathrmgsp4times}, \cite{Loeffler_2021}. In this paper, we construct a new example of an Euler system. More specifically, we construct an Euler system for the four-dimensional Galois representation associated to a general-type cohomological cuspidal automorphic representation of $\GSp_4/\mathbf{Q}$, twisted by a Groessencharacter of an imaginary quadratic field $K$. 

This is motivated by work of Lei-Loeffler-Zerbes. More specifically, in \cite{Lei_Loeffler_Zerbes_2015} the authors modify the classes attached to the Rankin-Selberg convolution of two modular forms (which were previously constructed in \cite{Lei_2014}) to obtain an Euler system for a single modular form twisted by a Groessencharacter of an imaginary quadratic field. In this paper we identify and apply an analogous modification to the $\GSp_4\times \GL_2$ classes of Hsu-Jin-Sakamoto \cite{hsu2020eulersystemsmathrmgsp4times}, which we then use to construct an Euler system for $\GSp_4$ over an imaginary quadratic field. Naturally, due to the more complicated nature of the group, there are several obstacles that arise along the way. Thus, our techniques differ in several places from the ones in \cite{Lei_Loeffler_Zerbes_2015}. Key ingredients to our approach include work of Gejima \cite[\S $3$]{Gejima2018AnEF} on unramified Shintani functions for $(\GSp_4, \GL_2\times_{\GL_1}\GL_2)$, and work of Loeffler-Pilloni-Skinner-Zerbes \cite[\S $8$]{loeffler2021higher} which relates Novodvorsky's integral representation of the degree eight local $L$-factor for $\GSp_4\times \GL_2$, to Piatetski-Shapiro's integral representation of the degree four local spin $L$-factor for $\GSp_4.$

Assuming non-vanishing of our Euler system, we obtain bounds on strict Selmer groups as an immediate application. For simplicity, we give these bounds under the additional asusmption that $\Pi$ has level one. This is purely to keep the bounding argument short, and the conditions on the weight of $\Pi$ as simple to state as possible. The more general case will be covered in future work on bounding Bloch-Kato Selmer groups. For clarification, we note that the level one assumption is \textit{not} imposed for the construction of our Euler system which is the main input for this application.
\subsection{Outline of the construction and main results} Let $K$ be an imaginary quadratic field of discriminant $\Delta_K$ and $\psi$ be a Groessencharacter of $K$ of infinity type $(-1,0)$ and modulus $\mathfrak{f}$. 
\begin{rem}
    We should note that our techniques allow for infinity types $(-1-d,0)$ with $d\in\mathbf{Z}_{\geq 0}$, but in this paper to ease notation and exposition we stick to the case $d=0$ as in \cite{Lei_Loeffler_Zerbes_2015}. We refer the reader to \cite[\S $3.1$]{marannino2025diagonalcyclesanticyclotomictwists} for an idea of how to extend the CM-patching of \cite[\S $5.2$]{Lei_Loeffler_Zerbes_2015} to $d>0$.
\end{rem}
We let $g_\psi$ be the associated cuspidal eigenform, of weight $2$ and level $\Gamma_1(N_{K/\mathbf{Q}}(\mathfrak{f})\Delta_K).$ Let $\Pi$ be a general-type cuspidal automorphic representation of $\GSp_4/\mathbf{Q}$ with $\Pi_\infty$ cohomological unitary discrete-series representation of weight $(k_1,k_2)$ with $k_1\geq k_2\geq 3$. Let $S$ be a finite set of primes such that $\Pi$ is unramified away from $S$. Set $N_\Pi:=\prod_{\ell\in S}\ell$, and let $U^{\GSp_4}(N_\Pi)\subseteq \GSp_4(\hat{\mathbf{Z}})$ be an arbitrary open compact level subgroup unramified away from $N_\Pi$.

  Let $G$ be the group $\GSp_4\times \GL_2$ defined over $\mathbf{Q}$. In \cite{hsu2020eulersystemsmathrmgsp4times}, following \cite{loeffler2021euler}, the authors construct an infinite-level Hecke equivariant symbol map
$$\mathcal{S}_{(0)}(\mathbf{A}_\mathrm{f}^2,\mathbf{Q})\otimes _\mathbf{Q}C_c^\infty(G(\mathbf{A}_\mathrm{f}),\mathbf{Q})\longrightarrow H_\mathrm{mot}^5\left(Y_G,\mathscr{W}_\mathbf{Q}^{a,b,d,*}(3-a-r)\right)[-a-r]$$
where $\mathcal{S}_{(0)}(\mathbf{A}_\mathrm{f}^2,\mathbf{Q})$ denotes a space of Schwartz functions, $Y_G$ is the infinite level Shimura variety for $G$, and $\mathscr{W}_\mathbf{Q}^{a,b,d,*}(3-a-r)$ is an appropriate relative Chow motive associated to an algebraic representation of $G$ parametrized by certain integers $a,b,d,r\in\mathbf{Z}_{\geq 0}$ which satisfy certain inequalities, and which we then fix according to the weight of $\Pi$. 

Let $c\in\mathbf{Z}_{>0}$ be an integer coprime to $6N_\Pi N_{K/\mathbf{Q}} (\mathfrak{f})\Delta_K$  and $\mathfrak{n}\subseteq\mathcal{O}_K$ an integral ideal of $K$ divisible by $\mathfrak{f}$. By appropriately defining local input data on the left-hand side of the symbol map at primes dividing $N_\Pi N_{K/\mathbf{Q}}(\mathfrak{f})\Delta_K$, we construct an integral class in the sense of \cite[\S $3$]{Loeffler_2021}, in 
$$H^5_\mathrm{mot}\left(Y_{\GSp_4}(U^{\GSp_4}(N_\Pi))\times Y_1(N_{K/\mathbf{Q}}(\mathfrak{n})\Delta_K),\mathscr{W}_\mathbf{Q}^{a,b,d,*}(3-a-r)\right).$$

Taking $L$ to be a large enough number field and $H_\mathfrak{n}$ the ray class group of $K$ of modulus $\mathfrak{n}$, we regard this class as an element of a certain Hecke quotient $\mathbf{H}_\mathrm{mot}^5(\Pi,\psi,\mathfrak{n},L)$ defined in \Cref{sec quotients of coh.}, which is naturally a module over $L[H_\mathfrak{n}].$ We call this class $_c\Xi_\mathfrak{n}^{\Pi,\psi}$. Constructing the correct local data defining these classes and proving that they satisfy the required Euler system norm-relations as elements of $\mathbf{H}_\mathrm{mot}^5(\Pi,\psi,\mathfrak{n},L)$ while suitably varying $\mathfrak{n}$, is one of the main difficulties and novelties of this work. It takes up the majority of the first couple of sections, and culminates in \Cref{cor norm-relations over ray class groups}.

The rest of \Cref{sec norm-relations over ray class groups} is concerned with transforming \Cref{cor norm-relations over ray class groups} into a statement in Galois cohomology over ray class fields of $K$. Let $p\nmid N_\Pi N_{K/\mathbf{Q}} (\mathfrak{f})\Delta_K$ and let $\mathfrak{P}$ be a prime of $L$ above $p$. Let $V_\Pi$ be the associated four-dimensional $L_\mathfrak{P}$-linear Galois representation and $V_\Pi^*$ be its dual. Using the $p$-adic \'etale realisation, an Abel-Jacobi map, and a choice of non-zero vector in $\Pi_\mathrm{f}^{U^{\GSp_4}(N_\Pi)}$, we construct a well-defined map from $\mathbf{H}_\mathrm{mot}^5(\Pi,\psi,\mathfrak{n},L)$ to the first Galois cohomology 
\begin{align}\label{eq: intro 2}H^1\left(\mathbf{Q},V_\Pi^*(-k_2+2)\otimes_{L_\mathfrak{P}} H^{1}(\psi,\mathfrak{n},L_\mathfrak{P})\right).\end{align}
We refer the reader to \Cref{def H^1p} for the definition of the groups $H^{1}(\psi,\mathfrak{n},L_\mathfrak{P})$, which are defined as in \cite[\S $5.2$]{Lei_Loeffler_Zerbes_2015}. We call the image of $_c\Xi_\mathfrak{n}^{\Pi,\psi}$ in \eqref{eq: intro 2}, $_c z_\mathfrak{n}^{\Pi,\psi}$. 

Let $\psi_\mathfrak{P}$ be the $p$-adic avatar of the Groessencharacter $\psi$ regarded as a continuous $L_\mathfrak{P}^\times$-valued character of $\mathrm{Gal}(\overline{K}/K)$ via class field theory. In \Cref{sec: CM patching}, we apply the CM-patching method of \cite[\S $5.2$]{Lei_Loeffler_Zerbes_2015} to (non-canonically) identify \eqref{eq: intro 2} with
\begin{align}\label{eq: intro 3}
    H^1\left(K(\mathfrak{n}),V_\Pi^*(-k_2+2)(\psi_\mathfrak{P}^{-1})\right)
\end{align}
where $K(\mathfrak{n})$ denotes the maximal abelian $p$-extension inside the ray class field of $K$ of modulus $\mathfrak{n}.$ We can thus define the class $_c\mathbf{z}_\mathfrak{n}^{\Pi,\psi}$ to be the image of $_cz_\mathfrak{n}^{\Pi,\psi}$ in \eqref{eq: intro 3}. Recall that these classes were defined for $\mathfrak{n}$ which are divisible by $\mathfrak{f}$. Finally, for an integral ideal $\mathfrak{n}\subseteq \mathcal{O}_K$ coprime to $\mathfrak{f}$, we set $_c\mathbf{z}_\mathfrak{n}^{\Pi,\psi}$ to be the image of $_c\mathbf{z}_\mathfrak{nf}^{\Pi,\psi}$ under the Galois corestriction map. These are our Euler system classes.
\begin{theorem}[\Cref{thm split norm-relations in Galois}]\label{thm: intro A}
     Let $\mathscr{A}$ be the set of ideals of $\mathcal{O}_K$ whose norm is coprime to $cpN_\Pi N_{K/\mathbf{Q}}(\mathfrak{f})\Delta_K.$ Then for any $\mathfrak{n,l}\in\mathscr{A}$ with $\ell=\mathfrak{l\overline{l}}\nmid\mathfrak{n}$, a split prime, we have 
$$\mathrm{cores}^{K(\mathfrak{nl})}_{K(\mathfrak{n})}\left( _c \mathbf{z}_\mathfrak{nl}^{\Pi,\psi} \right)=Q_\mathfrak{l}(\sigma_\mathfrak{l}^{-1})\cdot\ _c \mathbf{z}_\mathfrak{n}^{\Pi,\psi}$$
    where $Q_\mathfrak{l}(X)=\det(1-X\ \mathrm{Frob}_\mathfrak{l}^{-1}|V_\Pi(\psi_\mathfrak{P})(k_2-1))$, $\mathrm{Frob}_\mathfrak{l}\in\mathrm{Gal}(\overline{K}/K)$ is arithmetic Frobenius and $\sigma_\mathfrak{l}$ is its image in $\mathrm{Gal}(K(\mathfrak{n})/K).$ Moreoever, every such class lies in the image of the cohomology of a Galois stable $\mathcal{O}_{L_\mathfrak{P}}$-lattice $T_\Pi^*(-k_2+2)(\psi_\mathfrak{P}^{-1})$.
\end{theorem}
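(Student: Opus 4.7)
The strategy is to transport the motivic norm-relation of \Cref{cor norm-relations over ray class groups} through the étale Abel-Jacobi map and the CM-patching isomorphism, and then to identify the resulting Euler factor with $Q_\mathfrak{l}(\sigma_\mathfrak{l}^{-1})$. All three ingredients are equivariant for the natural $L[H_\mathfrak{n}]$-action on source and target, with class field theory identifying $H_\mathfrak{n}\cong\mathrm{Gal}(K(\mathfrak{n})/K)$; accordingly the Hecke-theoretic norm from $H_\mathfrak{nl}$ to $H_\mathfrak{n}$ is sent to the Galois corestriction $\mathrm{cores}^{K(\mathfrak{nl})}_{K(\mathfrak{n})}$.

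Concretely, first I would apply the $p$-adic étale realisation and Abel-Jacobi, together with the fixed test vector in $\Pi_\mathrm{f}^{U^{\GSp_4}(N_\Pi)}$, to the identity in \Cref{cor norm-relations over ray class groups}. This produces a norm-relation between $_c z_\mathfrak{nl}^{\Pi,\psi}$ and $_c z_\mathfrak{n}^{\Pi,\psi}$ in \eqref{eq: intro 2}. Pushing this through the CM-patching isomorphism of \Cref{sec: CM patching} gives an identity in the group in \eqref{eq: intro 3} relating $\mathrm{cores}^{K(\mathfrak{nl})}_{K(\mathfrak{n})}(_c\mathbf{z}_\mathfrak{nl}^{\Pi,\psi})$ to a polynomial in $\sigma_\mathfrak{l}^{-1}$ acting on $_c\mathbf{z}_\mathfrak{n}^{\Pi,\psi}$. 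For $\mathfrak{n}$ coprime to $\mathfrak{f}$, the defining identity $_c\mathbf{z}_\mathfrak{n}^{\Pi,\psi}=\mathrm{cores}^{K(\mathfrak{nf})}_{K(\mathfrak{n})}(_c\mathbf{z}_\mathfrak{nf}^{\Pi,\psi})$, combined with transitivity of corestriction and the fact that $\sigma_\mathfrak{l}$ is central in the relevant abelian extension, reduces the claim to the case $\mathfrak{f}\mid\mathfrak{n}$ handled above.

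The main obstacle is the identification of this polynomial with $Q_\mathfrak{l}$. On the motivic side the polynomial is \emph{a priori} expressed in terms of the unramified Hecke eigenvalues of $\Pi_\ell$ and of $\psi$ at $\mathfrak{l}$, since the local input data at $\ell=\mathfrak{l}\overline{\mathfrak{l}}$ built in the earlier sections were designed precisely to match the relevant local $L$-factor. Local-global compatibility for $V_\Pi$ at $\ell$ (unramified since $\ell\nmid N_\Pi p$), together with the compatibility of $\psi_\mathfrak{P}$ with the $p$-adic avatar of $\psi$, converts Satake parameters into Frobenius eigenvalues; a careful accounting of the Tate twists $(-k_2+2)$ and $(k_2-1)$ then produces exactly $\det(1-X\,\mathrm{Frob}_\mathfrak{l}^{-1}|V_\Pi(\psi_\mathfrak{P})(k_2-1))$ evaluated at $X=\sigma_\mathfrak{l}^{-1}$. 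For the integrality statement, the integrality of $_c\Xi_\mathfrak{n}^{\Pi,\psi}$ in the sense of \cite[\S 3]{Loeffler_2021} forces its étale realisation into the cohomology of an $\mathcal{O}_{L_\mathfrak{P}}$-lattice; a suitable choice of integral test vector in $\Pi_\mathrm{f}^{U^{\GSp_4}(N_\Pi)}$ cuts out $T_\Pi^*$ after Hecke projection, and since CM-patching is realised by a Galois-equivariant isomorphism of representations it preserves these integral structures, yielding a class in $H^1(K(\mathfrak{n}),T_\Pi^*(-k_2+2)(\psi_\mathfrak{P}^{-1}))$.
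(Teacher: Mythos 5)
Your proposal is correct and follows essentially the same path as the paper: transport \Cref{cor norm-relations over ray class groups} through the étale Abel–Jacobi map and the fixed test vector (which is exactly \Cref{prop descent of AJ} and \Cref{cor norm-relations one but final}), then push through the compatible family of CM-patching isomorphisms $\nu_\mathfrak{n}$ and the commutative diagram \eqref{eq: comm diagram}; the Euler factor $P_\ell([\mathfrak{l}]\psi(\mathfrak{l})\ell^{-2-a})$ becomes $Q_\mathfrak{l}(\sigma_\mathfrak{l}^{-1})$ once one uses $a=k_2-3$, the geometric normalisation of the Artin map (so $[\mathfrak{l}]\mapsto\sigma_\mathfrak{l}^{-1}$ and $\psi_\mathfrak{P}(\mathrm{Frob}_\mathfrak{l}^{-1})=\psi(\mathfrak{l})$), and \Cref{thm galois rep}(3) identifying $P_\ell$ with the characteristic polynomial of Frobenius on $V_\Pi$. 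Your extra observation about extending to $\mathfrak{n}$ coprime to $\mathfrak{f}$ via transitivity of corestriction, and your integrality argument via $\mathcal{O}_{L_\mathfrak{P}}$-defined $\nu_\mathfrak{n}$, are both implicit in the paper's brief appeal to the argument of \cite[Theorem 5.3.2]{Lei_Loeffler_Zerbes_2015} and are correctly supplied.
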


 Write $_c\mathbf{z}^{\Pi,\psi}\in H^1(K,V_\Pi^*(-k_2+2)(\psi_\mathfrak{P}^{-1}))$ for the image of the class $_c\mathbf{z}_{1}^{\Pi,\psi}$ (i.e. $\mathfrak{n}=1$) under evaluation at the trivial character of $\mathrm{Gal}(K(1)/K)$. Let $T:=T_\Pi^*(-k_2+2)(\psi_\mathfrak{P}^{-1}), W:=T\otimes_{\mathcal{O}_{L_\mathfrak{P}}}(L_\mathfrak{P}/\mathcal{O}_{L_\mathfrak{P}})$ and $W^*:=T^\vee(1).$ In \Cref{sec: selmer bound} we use \Cref{thm: intro A} directly, together with the classical machinery of Rubin \cite{rubin2000annals}, to obtain a bound on the strict Selmer group $\mathrm{Sel}_{\Sigma_p}(K,T^\vee(1)).$ In this paper, we do this under the following hypotheses:
\begin{itemize}
    \item $\Pi$ has level one (i.e. $\Pi_\ell$ is unramified for all primes $\ell$) and $k_1>k_2$.
    \item $T$ satisfies a ``big image'' assumption in the sense of \cite{rubin2000annals}.
\end{itemize}
\begin{rem}For the precise nature of the ``big image'' hypothesis, the role of the weight hypothesis $k_1>k_2$ and comments on how to extend it to arbitrary level, we refer the reader to \Cref{sec hypotheses}. Moreover, by Chebotarev's density theorem, in order to bound Selmer groups in our setting it is enough to have Euler system norm-relations at split primes of $K$, which is exactly what \Cref{thm: intro A} supplies us with.  
\end{rem}
\begin{theorem}[\Cref{thm selmer bound}]
    Let $\Sigma_p$ be the set of places of $K$ above $p$. Suppose that the class $_c \mathbf{z}^{\Pi,\psi}$ is non-zero and the hypotheses of \emph{\Cref{sec hypotheses}} are satisfied. Then 
$$\mathcal{\ell}_{\mathcal{O}_{L_\mathfrak{P}}}\left(\mathrm{Sel}_{\Sigma_p}(K,T^\vee(1))\right)\leq \mathrm{ind}_{\mathcal{O}_{L_\mathfrak{P}}}(_c \mathbf{z}^{\Pi,\psi})+\mathfrak{n}_W+\mathfrak{n}_W^*<\infty$$
where the three quantities on the right are defined as in \cite[\S $2$]{rubin2000annals}. In particular, the strict Selmer group $\mathrm{Sel}_{\Sigma_p}(K,T^\vee(1))$ is finite.
\end{theorem}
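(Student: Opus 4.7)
The plan is to deduce the Selmer bound as a direct application of the Euler system machinery of Rubin \cite[Chapter 2]{rubin2000annals}. \Cref{thm: intro A} supplies precisely the input needed: a collection of classes $\{{}_c\mathbf{z}_\mathfrak{n}^{\Pi,\psi}\}_{\mathfrak{n}\in\mathscr{A}}$ in $H^1(K(\mathfrak{n}),T)$ for the lattice $T=T_\Pi^*(-k_2+2)(\psi_\mathfrak{P}^{-1})$, satisfying the correct norm-compatibility at split primes of $K$ with Euler factors $Q_\mathfrak{l}(\sigma_\mathfrak{l}^{-1})$. The non-vanishing assumption on the bottom class ${}_c\mathbf{z}^{\Pi,\psi}$ forces $\mathrm{ind}_{\mathcal{O}_{L_\mathfrak{P}}}({}_c\mathbf{z}^{\Pi,\psi})<\infty$, so once the bound is established, the finiteness of the strict Selmer group is automatic.

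The first step is to check that having norm-relations only at split primes of $K$ is no handicap. Rubin's argument proceeds by choosing Kolyvagin primes $\mathfrak{l}$ whose Frobenius acts on $T$ (and its relevant reductions) in a prescribed manner, forming Kolyvagin derivative classes $D_\mathfrak{n}\,{}_c\mathbf{z}_\mathfrak{n}^{\Pi,\psi}$ on squarefree products of such primes, and bounding $\mathrm{Sel}_{\Sigma_p}(K,T^\vee(1))$ via global Poitou--Tate duality. The selection of Kolyvagin primes rests on Chebotarev, and since any conjugacy class in a finite quotient of $G_K$ may be realised by a Frobenius of a prime that is simultaneously split in $K/\mathbf{Q}$ (apply Chebotarev inside $G_\mathbf{Q}$ relative to the Galois closure over $\mathbf{Q}$ of the field one is considering: the intersection of the coset $G_K\subseteq G_\mathbf{Q}$ with the preimage of the target conjugacy class is non-empty), we can always restrict to split Kolyvagin primes without losing generality. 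For these, \Cref{thm: intro A} supplies the required norm-relations with the correct Euler factors.

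The second step is to invoke the big image hypothesis of \Cref{sec hypotheses} to guarantee the existence of sufficiently many useful Kolyvagin primes and to ensure the finiteness of the error terms $\mathfrak{n}_W$ and $\mathfrak{n}_{W^*}$. Under this hypothesis, \cite[Theorem 2.2.3]{rubin2000annals} applies directly to the Euler system of \Cref{thm: intro A} and yields the desired inequality. The main obstacle is not the application of Rubin's theorem per se --- which, once the input is in place, is mechanical --- but rather the translation required to cast our setting into the precise framework of \cite{rubin2000annals}: one must verify that the strict local condition at $\Sigma_p$ matches Rubin's conventions on Selmer structures, that the Euler factors $Q_\mathfrak{l}(X)$ behave compatibly with the derivative construction at split Kolyvagin primes, and that the big image hypothesis as formulated in \Cref{sec hypotheses} is strong enough to imply the hypotheses Rubin actually uses (non-triviality of certain Galois cohomologies of $T/\pi T$, existence of elements $\tau\in G_K$ with $T/(\tau-1)T$ cyclic of the required type, etc.). This kind of bookkeeping has been carried out in analogous settings, most notably in \cite{Lei_Loeffler_Zerbes_2015} for the $\GL_2$-analogue, which serves as a direct template.
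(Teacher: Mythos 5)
Your proposal has the right skeleton --- feed the norm relations of \Cref{thm: intro A} into Rubin's Euler system machinery, use Chebotarev inside $G_\mathbf{Q}$ to see that restriction to split Kolyvagin primes costs nothing --- and this matches the paper's strategy. However, there are two substantive gaps relative to what the paper actually has to do.

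First, you write that Rubin's theorem ``applies directly'' once the big image hypothesis is granted. It does not. The tower of ray class fields $K(\mathfrak{n})$ for $\mathfrak{n}$ ranging over $\mathscr{A}$ (ideals of norm coprime to $p$) does not contain a $\mathbf{Z}_p$-extension of $K$, so the vanilla statement of \cite[Theorem 2.2]{rubin2000annals} is not available. One must instead invoke the variant covered in \cite[\S 9.1]{rubin2000annals}, which requires verifying condition (ii')(b) there in addition to Hyp$(K,T)$. Acknowledging this substitution, and where hypotheses (H2)--(H3) of \Cref{sec hypotheses} land (supplying the element $\gamma$, giving $T^{\mathrm{Gal}(\overline{K}/K(1))}=0$ via Nakayama), is a real part of the proof you cannot skip.

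Second, and more importantly, you characterize the entirety of \Cref{sec hypotheses} as a ``big image hypothesis,'' but hypothesis (H1) --- that $\Pi$ has level one and $k_1>k_2$ --- is of a completely different nature, and its role is precisely the verification you wave at but do not perform: showing that the classes $_c\mathbf{z}_\mathfrak{n}^{\Pi,\psi}$ actually belong to $\mathrm{Sel}^{\Sigma_p}(K(\mathfrak{n}),T)$, i.e.\ satisfy the unramified local condition away from $p$. This is the bulk of the paper's proof. Concretely one must show $H^1(K(\mathfrak{n})_{v_\mathfrak{n}},V)=0$ for places $v_\mathfrak{n}\nmid p$, which by local Tate duality reduces to showing $1$ is not a Frobenius eigenvalue on $V^{I_{v_\mathfrak{n}}}$ nor on $V^*(1)^{I_{v_\mathfrak{n}}}$. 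When the place divides $\mathfrak{f}$ the inertia invariants vanish because $\psi_\mathfrak{P}$ is ramified; otherwise, level one ensures $V$ is unramified there and \Cref{thm galois rep}(4) plus $|\psi(v)|=N(v)^{1/2}$ give Frobenius eigenvalues of archimedean absolute value $N(v)^{(k_1-k_2+2)/2}$ and $N(v)^{(k_2-k_1)/2}$, neither equal to $1$ because $k_1>k_2$. Without this argument (or a substitute for it) the input classes are not known to be in the Selmer group Rubin's method bounds, and the proof does not go through.
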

%\subsection{Future remarks}
%We should be able to $p$-adically modify and study the localization of the Euler system constructed in this work. The corresponding Euler system in Iwasawa cohomology should have a bottom class related via an explicit reciprocity law to a (conjectural) $p$-adic $L$-function interpolating the spin $L$-function of $\mathrm{BC}(\Pi)\times\psi$. One approach to construct such a $p$-adic $L$-function would be to utilize the $\GSp_4\times \GL_2$ results of \cite{loeffler2019higher} in a manner analogous to how \cite{Lei_Loeffler_Zerbes_2015} utilizes the $\GL_2\times \GL_2$ results of \cite{Lei_2014}. With this at our disposal and using the strategy developed in \cite{loeffler2024blochkatoconjecturegsp4}, we should be able to obtain new cases of the Bloch-Kato conjecture in analytic rank zero. 

%Additionally, even though not necessary for applications, it would be interesting to study the corresponding norm-relations of \Cref{thm: intro A} at inert primes of $K$. 
\subsection{Acknowledgments}
I would like to thank my PhD supervisor David Loeffler for bringing this problem to my attention, and for his ongoing guidance and support. I would also like to thank Ananyo Kazi for helpful discussions during our stay in Alpbach.
{
  \hypersetup{linkcolor=black}
  \setcounter{tocdepth}{2}
  \tableofcontents
}
\section{General notation}
\begin{itemize}
    \item We use the following matrix model for the general symplectic group of degree $4$. Let $J$ be the 
$4\times4$ skew-symmetric matrix $\left[\begin{smallmatrix}
        & & & 1\\
        & & 1 &\\
        & -1 & &\\
        -1 & & & 
    \end{smallmatrix}\right]$. We write $\GSp_4$ for the algebraic group over $\mathbf{Z}$ over defined by 
    $$\GSp_4(A):=\left\{g\in \GL_4(A)\ |\ g^t J g=\mu(g) J\right\}$$
 for any unital $\mathbf{Z}$-algebra $A$. The map $\mu: \GSp_4\rightarrow \GL_1, g\mapsto\mu(g)$ is called the multiplier map.
 \item We write $G:=\GSp_4\times \GL_2$ and $H:=\GL_2\times_{\GL_1}\GL_2$ where the fiber product is taken with respect to the determinant map from $\GL_2$ to $\GL_1$.
 \item We always regard $H$ as a subgroup of $G$ via the embedding 
 $$\iota:(\left[\begin{smallmatrix}
     a_1 & b_1\\
     c_1 & d_1
 \end{smallmatrix}\right],\left[\begin{smallmatrix}
     a_2 & b_2\\
     c_2 & d_2
 \end{smallmatrix}\right])\mapsto (\left[\begin{smallmatrix}
     a_1 & & & b_1\\
     & a_2 & b_2 & \\
     & c_2 & d_2 & \\
     c_1 & & & d_1
 \end{smallmatrix}\right],\left[\begin{smallmatrix}
     a_2 & b_2\\
     c_2 & d_2
 \end{smallmatrix}\right]).$$
 In the same manner, we regard $H$ as a subgroup of $\GSp_4$ by projecting $\iota$ to the $\GSp_4$ component.
 \item For a $\mathbf{Z}$-algebra $A$, We write $\mathcal{S}(\mathbf{Q}_\ell^2,A)$ for the space of $A$-valued Schwartz functions on $\mathbf{Q}_\ell^2.$ If $A=\mathbf{C}$ we omit it from the notation. We regard $\mathcal{S}(\mathbf{Q}_\ell^2,A)$ as a smooth left $H(\mathbf{Q}_\ell)$-representation via the first projection map $\mathrm{pr_1}:H\rightarrow \GL_2$. In other words, for $\phi\in \mathcal{S}(\mathbf{Q}_\ell^2)$ and $h=(h_1,h_2)\in H(\mathbf{Q}_\ell)$, we set $h\cdot \phi:=\phi((-)h_1)$. For an open compact subset $V\subseteq\mathbf{Q}_\ell^2$ we write $\ch(V)$ for the characteristic function of $V$.
 \item For $\Gamma$ a linear algebraic group over $\mathbf{Q}_\ell$, with $\Gamma(\mathbf{Q}_\ell)$ unimodular, we write $\mathcal{H}_{\Gamma(\mathbf{Q}_\ell)}^\circ$ for the spherical Hecke algebra of $\Gamma(\mathbf{Z}_\ell)$-bi-invariant compactly supported functions on $\Gamma(\mathbf{Q}_\ell)$, under the usual convolution taken with respect to the normalized Haar measure giving $\Gamma(\mathbf{Z}_\ell)$ volume $1$. For an open compact subset $U\subseteq \Gamma(\mathbf{Q}_\ell)$ we write $\ch(U)$ for the characteristic function of $U$.
 \item For a function $f$ on a group $\mathcal{G}$, we denote by $f'$ the function on $\mathcal{G}$ given by $f'(\gamma):=f(\gamma^{-1}).$ 
\end{itemize}
\section{Local structure theorems and zeta integrals}
For an open compact subgroup $U\subseteq G(\mathbf{Q}_\ell)$, we write $\mathcal{I}(G(\mathbf{Q}_\ell)/U)$ for the space of $H(\mathbf{Q}_\ell)$-coinvariants
 $$\left[\mathcal{S}(\mathbf{Q}_\ell^2)\otimes C_c^\infty(G(\mathbf{Q}_\ell)/U)\right]_{H(\mathbf{Q}_\ell)}$$
 where the action of $H(\mathbf{Q}_\ell)$ on $C_c^\infty(G(\mathbf{Q}_\ell)/U)$ is given by $h\cdot \xi:=\xi(h^{-1}(-))$ for $\xi\in C_c^\infty(G(\mathbf{Q}_\ell)/U).$ In this section, our first goal is to show that the 
$\mathcal{H}_{H(\mathbf{Q}_\ell)}^\circ\otimes\mathcal{H}_{\GSp_4(\mathbf{Q}_\ell)}^\circ$-module $C_c^\infty(H(\mathbf{Z}_\ell)\backslash \GSp_4(\mathbf{Q}_\ell)/\GSp_4(\mathbf{Z}_\ell))$ under the action 
$$((\theta_1\otimes\theta_2)\cdot\xi )(x):=\int_{\GSp_4(\mathbf{Q}_\ell)}\int_{H(\mathbf{Q}_\ell)}\theta_1(h)\theta_2(g)\xi(h^{-1}xg)\ dh\ dg$$
is cyclic, and generated by $\ch(\GSp_4(\mathbf{Z}_\ell)).$ We will then use this to show that the module $\mathcal{I}(G(\mathbf{Q}_\ell)/G(\mathbf{Z}_\ell))$ over the Hecke algebra $\mathcal{H}_{G(\mathbf{Q}_\ell)}^\circ$, is free of rank one generated by $\ch(\mathbf{Z}_\ell^2)\otimes\ch(G(\mathbf{Z}_\ell))$ under the natural right translation action
$$\theta\cdot (\phi\otimes\xi):=\phi\otimes\left(\int_{G(\mathbf{Q}_\ell)}\theta(g)\xi((-)g)\ dg\right).$$
\subsection{Gejima's decomposition} We heavily rely on the results of Gejima \cite{Gejima2018AnEF} which in turn closely follow the approach of Kato-Murase-Sugano \cite{kato2003whittaker} on uniqueness of unramified Shintani functions for the special orthogonal group. We define the following matrix
$$A:=\left[\begin{smallmatrix}
    1 & & & \\
    & 1 & &  \\
    & & & 1\\
    & & 1 & 
\end{smallmatrix}\right]\in\GL_4(\mathbf{Z}_\ell).$$
Gejima uses a slightly different matrix model for the general symplectic group and thus also a sifferent embedding of $H$. Namely, the matrix group he calls $\mathbf{G}$ is related to our $\GSp_4$ through the isomorphism $\mathbf{G}\simeq\GSp_4, g\mapsto AgA$, which identifies our choice of embedding $\iota$ of $H$ with that of \textit{op.cit}. 
\begin{thm}[\cite{Gejima2018AnEF}]\label{thm: cartan decomp}
     There exists a Cartan type decomposition
    $$\GSp_4(\mathbf{Q}_\ell)=\bigsqcup_{\substack{\mu\in\Lambda^+\\ \mu'\in\Lambda_0^{++}}}H(\mathbf{Z}_\ell)\ \mathbf{g}(\mu',\mu)\ \GSp_4(\mathbf{Z}_\ell)$$
    where 
    \begin{itemize}
        \item $\Lambda^+:=\{\mu=(\mu_1,\mu_2,\mu_3)\in\mathbf{Z}^3\ |\ \mu_1\geq \mu_2, 2\mu_2\geq \mu_3\}$.
        \item $\Lambda_0^{++}:=\{\mu'=(\mu'_1,\mu'_2,\mu'_3)\in\mathbf{Z}^3\ |\ \mu'_1\geq 0, 2\mu'_2\geq \mu'_3\}$.
        \item $\mathbf{g}(\mu',\mu):=\mathbf{t}(\mu')\cdot B\cdot\mathbf{t}(\mu)\in \GSp_4(\mathbf{Q}_\ell)$ where for $\nu\in\mathbf{Z}^3$, $\mathbf{t}(\nu):=\mathrm{diag}(\ell^{\nu_1},\ell^{\nu_2},\ell^{\nu_3-\nu_2},\ell^{\nu_3-\nu_1})\in H(\mathbf{Q}_\ell)$ and $B$ is the matrix $A\left[\begin{smallmatrix}
            1 & 1 & & 1 \\
            & 1 & 1 & \\
            & & 1 & \\
            & & -1 & 1
        \end{smallmatrix}\right]A=\left[\begin{smallmatrix}
          1 & 1& 1& \\
           & 1& & 1\\
          & & 1 & -1\\
          & & & 1
        \end{smallmatrix}\right]$.
    \end{itemize}
    \begin{proof}
        This is \cite[Theorem $3.2.1$]{Gejima2018AnEF} after passing through the identification $\mathbf{G}\simeq\GSp_4$ and correcting a minor typographical error in \textit{op.cit}. For clariy, the representatives $\mathbf{g(\mu',\mu)}$ defined here are related to the representatives $g(\mu',\mu)$ of \textit{loc.cit} via $Ag(\mu',\mu)A=\mathbf{g}(\mu',\mu).$ 
    \end{proof}
\end{thm}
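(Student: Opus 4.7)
The plan is to establish the decomposition by first invoking the standard Cartan decomposition of $\GSp_4$ relative to the hyperspecial subgroup $K_0 := \GSp_4(\mathbf{Z}_\ell)$, and then classifying the remaining $H(\mathbf{Z}_\ell)$-orbits using geometric invariants. The classical decomposition $\GSp_4(\mathbf{Q}_\ell) = \bigsqcup_{\mu \in \Lambda^+} K_0\, \mathbf{t}(\mu)\, K_0$ handles the right $K_0$-quotient, so the remaining task is to understand, for each fixed $\mu\in\Lambda^+$, the orbits of $H(\mathbf{Z}_\ell)$ acting on the left of $K_0\, \mathbf{t}(\mu)\, K_0 / K_0$.

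For this I would adapt the geometric strategy of Kato-Murase-Sugano. The key inputs are the $H$-orbits on the full flag variety of $\GSp_4$: these orbits are finite (the pair $(\GSp_4, H)$ is spherical) and one singles out a distinguished open orbit. The unipotent matrix $B$ is intended to be a convenient representative of this open orbit; a direct check confirms that $B \in \GSp_4(\mathbf{Z}_\ell)$ and that left-multiplication by $B$ exchanges the diagonal torus of $H$ with a convenient transversal inside the diagonal torus of $\GSp_4$. Within each $K_0$-coset of $\mathbf{t}(\mu)$, the $H(\mathbf{Z}_\ell)$-orbits should then be parametrized by elements $\mathbf{t}(\mu')$ of the $H$-torus, and the cone $\Lambda_0^{++}$ should arise as the set of $\mu'$ which are dominant relative to the Weyl chamber stabilizing the open orbit. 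Similarly, $\Lambda^+$ is precisely the cone of dominant cocharacters of $\GSp_4$, so that each double coset has a canonical representative.

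The main obstacle is the disjointness statement, which is not formal. My approach would be to attach to each candidate $\mathbf{g}(\mu',\mu)$ a pair of invariants: (i) the elementary divisors of the associated $\mathbf{Z}_\ell$-lattice, which recover $\mu$; and (ii) a refined invariant for the $H$-action, for instance the relative position of the two rank-two sublattices cut out by the two $\GL_2$-factors in $\iota$, which should recover $\mu'$. The delicate part is verifying injectivity on the boundary of the dominant cones, where naive Weyl-stabilizer arguments could collapse representatives; this is precisely the point where the explicit form of $B$ must be used. In practice, the cleanest route is the one adopted here: transport the problem through the matrix-model isomorphism $g \mapsto AgA$ and invoke \cite[Theorem $3.2.1$]{Gejima2018AnEF}, since Gejima's argument already executes the orbit classification above for the twisted model $\mathbf{G}$, and the only verification required is that the change of variable matches the embedding $\iota$ used in this paper.
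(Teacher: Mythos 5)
Your proposal ultimately matches the paper's proof: both reduce to \cite[Theorem 3.2.1]{Gejima2018AnEF} via the matrix-model isomorphism $g\mapsto AgA$, which identifies the embedding $\iota$ of $H$ used here with Gejima's. The preliminary sketch you give (spherical pair, open $H$-orbit on the flag variety, lattice invariants for disjointness) is a fair summary of what Gejima's argument does internally, but since you correctly conclude by citing that theorem rather than re-proving it, the end result is the same short proof the paper gives.
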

Gejima then introduces a partial order $\geq_S$ on the set $S:=\Lambda_0^{++}\times \Lambda^+$ (\cite[Definition $3.3.9$]{Gejima2018AnEF}) and proves the following technical lemma.
\begin{lem}[\cite{Gejima2018AnEF}]\label{lem partial order}
    Let $(\mu',\mu)\in S$.
    \begin{enumerate}
        \item If $(\lambda',\lambda)\in S$ satisfies
        $$H(\mathbf{Z}_\ell)\mathbf{t}(\mu')\GSp_4(\mathbf{Z}_\ell)\mathbf{t}(\mu)\GSp_4(\mathbf{Z}_\ell)\ \cap\ H(\mathbf{Z}_\ell)\mathbf{g}(\lambda',\lambda) \GSp_4(\mathbf{Z}_\ell)\neq \emptyset$$
        then $(\mu',\mu)\geq_S(\lambda',\lambda)$.
        \item The number of $(\lambda',\lambda)\in S$ satisfying $(\mu',\mu)\geq_S(\lambda',\lambda)$ is finite and is denoted by $m(\mu',\mu).$ We have $1\leq m(\mu',\mu)<\infty.$
        \item If $(\lambda',\lambda)\in S$ satisfies $(\mu',\mu)\geq_S(\lambda',\lambda)$ and $(\lambda',\lambda)\neq(\mu',\mu)$, then $m(\lambda',\;\lambda)<m(\mu',\mu)$.
    \end{enumerate}
    \begin{proof}
        This is \cite[Lemma $3.3.10$, Remark $3.3.11$ \& Lemma $3.3.12$]{Gejima2018AnEF} under the identification $\mathbf{G}\simeq \GSp_4$.
    \end{proof}
\end{lem}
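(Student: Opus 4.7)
The plan is to reduce all three parts to a combinatorial analysis of the double cosets in $H(\mathbf{Z}_\ell)\backslash\GSp_4(\mathbf{Q}_\ell)/\GSp_4(\mathbf{Z}_\ell)$, which by \Cref{thm: cartan decomp} are indexed by $S=\Lambda_0^{++}\times\Lambda^+$. Before touching the lemma itself I would unpack the definition of $\geq_S$: the representatives $\mathbf{g}(\mu',\mu)=\mathbf{t}(\mu')\cdot B\cdot\mathbf{t}(\mu)$ are diagonal pieces sandwiching a fixed unipotent, so it is natural to take $\geq_S$ to be a componentwise comparison of the six coordinates $(\mu',\mu)$ (together with the multiplier), refined by the $\ell$-adic valuations appearing in the entries of $\mathbf{t}(\mu')B\mathbf{t}(\mu)$, or equivalently in the elementary divisors of appropriate minors.

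For part (1), I would take an element $x=\mathbf{t}(\mu')\cdot u_1\cdot B\cdot \mathbf{t}(\mu)\cdot u_2$ in the left-hand double coset set, with $u_1,u_2\in\GSp_4(\mathbf{Z}_\ell)$, and rewrite it as $h\cdot\mathbf{g}(\lambda',\lambda)\cdot u_3$ with $h\in H(\mathbf{Z}_\ell)$, $u_3\in\GSp_4(\mathbf{Z}_\ell)$. Comparing Smith normal forms of $x$ under the two-sided $\GSp_4(\mathbf{Z}_\ell)$-action, and then tracking the extra constraints coming from the left $H(\mathbf{Z}_\ell)$-action (which only allows row operations compatible with the Klingen block structure of $\iota$), should force every entry in the reduced form to satisfy explicit valuation bounds in terms of $(\mu',\mu)$. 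These bounds are exactly the inequalities defining $\geq_S$, yielding $(\mu',\mu)\geq_S(\lambda',\lambda)$.

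For parts (2) and (3), I would introduce a grading on $S$, for instance a weighted sum of the absolute values of the six coordinates of $(\mu',\mu)$ tuned to match the length function of the spin representation of $\GSp_4$. Finiteness in (2) reduces to verifying that the componentwise inequalities defining $\geq_S$ bound this grading; reflexivity of $\geq_S$ gives $m(\mu',\mu)\geq 1$, completing (2). For (3), the key observation is that this grading is strictly monotone along $\geq_S$, so that $(\mu',\mu)>_S(\lambda',\lambda)$ forces a strict drop; combined with transitivity of $\geq_S$, this yields a strict inclusion of the downward-closed sets below $(\lambda',\lambda)$ and $(\mu',\mu)$, and hence $m(\lambda',\lambda)<m(\mu',\mu)$.

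The hard part will be pinning down the correct partial order $\geq_S$ in the first place: it must be loose enough that part (1) holds given only the non-emptiness condition, yet tight enough that $m$ is finite and the grading strictly decreases. In practice the right order is read off \emph{a posteriori} from the elementary divisor computations of part (1) and only afterwards verified to be a bona fide partial order with the correct length function. I would not expect to improve on Gejima's original combinatorial analysis; the sketch above is essentially a roadmap for reconstructing it in the present setup.
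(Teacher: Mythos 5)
The paper does not reprove this lemma at all: the proof is a bare citation to Gejima's Lemma 3.3.10, Remark 3.3.11 and Lemma 3.3.12, translated via the conjugation $g\mapsto AgA$ that identifies Gejima's group $\mathbf{G}$ with the $\GSp_4$ used here (and hence his embedding with $\iota$). Your proposal is instead a roadmap for \emph{reproving} Gejima's statements from first principles, so the two are not really comparable; that said, your sketch is in the right spirit — Gejima's argument does run through a combinatorial analysis of the double cosets indexed by $S$ via the Cartan-type decomposition, with elementary-divisor-style valuation bounds driving part (1) and a length-type estimate driving finiteness in part (2).

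The genuine gap in your proposal is that $\geq_S$ is never actually pinned down, and you say explicitly that it would have to be read off \emph{a posteriori} from the computations in part (1). But that computation is the entire content of the lemma: without a concrete definition of the order (Gejima's Definition 3.3.9) and a verification that the valuation constraints forced by $H(\mathbf{Z}_\ell)\mathbf{t}(\mu')\GSp_4(\mathbf{Z}_\ell)\mathbf{t}(\mu)\GSp_4(\mathbf{Z}_\ell)$ landing in $H(\mathbf{Z}_\ell)\mathbf{g}(\lambda',\lambda)\GSp_4(\mathbf{Z}_\ell)$ coincide with that order, nothing in (1)--(3) is established. Two further points. First, the left $H(\mathbf{Z}_\ell)$-action under $\iota$ is not the Klingen Levi — $\iota$ interlaces the two $\GL_2$ copies across the anti-diagonal block structure (the ``Novodvorsky'' configuration), so describing the constraint as ``row operations compatible with the Klingen block structure'' is not quite right and would lead you to the wrong inequalities. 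Second, for part (3) the grading is overkill: once (2) gives finiteness, antisymmetry and transitivity of the partial order already give the strict inclusion of lower sets, hence $m(\lambda',\lambda)<m(\mu',\mu)$; the grading/length function is really only needed for the finiteness in (2).
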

\begin{cor}\label{cor: cyclicity ala Gejima}
    The $\mathcal{H}_{H(\mathbf{Q}_\ell)}^\circ\otimes\mathcal{H}_{\GSp_4(\mathbf{Q}_\ell)}^\circ$-module $C_c^\infty(H(\mathbf{Z}_\ell)\backslash \GSp_4(\mathbf{Q}_\ell)/\GSp_4(\mathbf{Z}_\ell))$ is cyclic, generated by $\ch(\GSp_4(\mathbf{Z}_\ell)).$
    \begin{proof}
        For ease of notation, we write $M$ for the module $C_c^\infty(H(\mathbf{Z}_\ell)\backslash \GSp_4(\mathbf{Q}_\ell)/\GSp_4(\mathbf{Z}_\ell))$ and $M'$ for the submodule $(\mathcal{H}_{H(\mathbf{Q}_\ell)}^\circ\otimes\mathcal{H}_{\GSp_4(\mathbf{Q}_\ell)}^\circ)\cdot\xi_0\subseteq M$ where $\xi_0:=\ch(\GSp_4(\mathbf{Z}_\ell))$. We write $\xi_{\mu',\mu}$ for the characteristic function of the double coset $H(\mathbf{Z}_\ell)\ \mathbf{g}(\mu',\mu)\ \GSp_4(\mathbf{Z}_\ell)$. Then by \Cref{thm: cartan decomp} it suffices to show that the $\xi_{\mu',\mu}$ lie in $M'$ for all $(\mu',\mu)\in S$. The proof now proceeds by induction on $m(\mu',\mu)\geq 1$. Let $(\mu',\mu)\in S$. If $\theta_1\in\mathcal{H}_{H(\mathbf{Q}_\ell)}^\circ$ is given by $\ch(H(\mathbf{Z}_\ell)\mathbf{t}(\mu')H(\mathbf{Z}_\ell))$ and $\theta_2\in\mathcal{H}_{\GSp_4(\mathbf{Q}_\ell)}^\circ$ is given by $\ch(\GSp_4(\mathbf{Z}_\ell) \mathbf{t}(\mu)^{-1} \GSp_4(\mathbf{Z}_\ell))$, then the function $(\theta_1\otimes\theta_2)\cdot\xi_0$ is supported on the set $H(\mathbf{Z}_\ell)\mathbf{t}(\mu')\GSp_4(\mathbf{Z}_\ell)\mathbf{t}(\mu)\GSp_4(\mathbf{Z}_\ell)$. Thus, by \Cref{lem partial order} it is a finite linear combination of $\xi_{\lambda',\lambda}$ with $(\mu',\mu)\geq_S (\lambda',\lambda)$. The base case of the induction is $m(\mu',\mu)=1$ in which case parts $(2)$ and $(3)$ of \Cref{lem partial order} imply that $(\theta_1\otimes\theta_2)\cdot\xi_0$ is a non-zero scalar multiple of $\xi_{\mu',\mu}$ and we are done. The inductive step follows in the same manner.
    \end{proof}
\end{cor}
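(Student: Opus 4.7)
The plan is to argue by induction on the rank function $m(\mu',\mu)$ supplied by \Cref{lem partial order}. By \Cref{thm: cartan decomp}, the space $M := C_c^\infty(H(\mathbf{Z}_\ell)\backslash \GSp_4(\mathbf{Q}_\ell)/\GSp_4(\mathbf{Z}_\ell))$ has a basis given by the characteristic functions $\xi_{\mu',\mu}$ of the double cosets $H(\mathbf{Z}_\ell)\mathbf{g}(\mu',\mu)\GSp_4(\mathbf{Z}_\ell)$ indexed by $(\mu',\mu) \in S$. Writing $M' \subseteq M$ for the cyclic submodule generated by $\xi_0 := \ch(\GSp_4(\mathbf{Z}_\ell))$, my goal reduces to showing $\xi_{\mu',\mu} \in M'$ for every $(\mu',\mu) \in S$.

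To produce candidate elements of $M'$ aimed at $\xi_{\mu',\mu}$, I would act on $\xi_0$ by the natural pair of spherical Hecke operators tailored to the Cartan parameters, namely $\theta_1 := \ch(H(\mathbf{Z}_\ell)\mathbf{t}(\mu')H(\mathbf{Z}_\ell))$ on the left and $\theta_2 := \ch(\GSp_4(\mathbf{Z}_\ell)\mathbf{t}(\mu)^{-1}\GSp_4(\mathbf{Z}_\ell))$ on the right. Unwinding the convolution definition shows that $(\theta_1 \otimes \theta_2)\cdot\xi_0$ is supported on the product set $H(\mathbf{Z}_\ell)\mathbf{t}(\mu')\GSp_4(\mathbf{Z}_\ell)\mathbf{t}(\mu)\GSp_4(\mathbf{Z}_\ell)$. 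By part (1) of \Cref{lem partial order} this support meets only those double cosets $H(\mathbf{Z}_\ell)\mathbf{g}(\lambda',\lambda)\GSp_4(\mathbf{Z}_\ell)$ for which $(\mu',\mu) \geq_S (\lambda',\lambda)$, and by part (2) there are only finitely many such indices. Hence $(\theta_1 \otimes \theta_2)\cdot\xi_0$ is a finite linear combination of the $\xi_{\lambda',\lambda}$ with $(\lambda',\lambda) \leq_S (\mu',\mu)$, and the coefficient of $\xi_{\mu',\mu}$ itself is non-zero since $\mathbf{g}(\mu',\mu) = \mathbf{t}(\mu')\cdot B \cdot \mathbf{t}(\mu)$ with $B \in \GSp_4(\mathbf{Z}_\ell)$ clearly lies in the support.

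With this in hand, I would induct on $m(\mu',\mu) \geq 1$. The base case is $m(\mu',\mu) = 1$: parts (2) and (3) of \Cref{lem partial order} force $(\mu',\mu)$ to be the only element $\leq_S (\mu',\mu)$, so the above combination collapses to a non-zero multiple of $\xi_{\mu',\mu}$, placing $\xi_{\mu',\mu}$ in $M'$. For the inductive step, every $\xi_{\lambda',\lambda}$ with $(\lambda',\lambda) <_S (\mu',\mu)$ satisfies $m(\lambda',\lambda) < m(\mu',\mu)$ by part (3) and is therefore already in $M'$ by the inductive hypothesis; subtracting these lower contributions from $(\theta_1 \otimes \theta_2)\cdot\xi_0$ leaves a non-zero multiple of $\xi_{\mu',\mu}$ inside $M'$, completing the induction.

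The only potential obstacle is verifying the non-vanishing of the $\xi_{\mu',\mu}$-coefficient in $(\theta_1 \otimes \theta_2)\cdot\xi_0$; however, this is a routine bookkeeping with Haar measure using the explicit factorization $\mathbf{g}(\mu',\mu) = \mathbf{t}(\mu')B\mathbf{t}(\mu)$ and $B \in \GSp_4(\mathbf{Z}_\ell)$, so all the substantive work has already been done in \Cref{thm: cartan decomp} and \Cref{lem partial order}, and the corollary follows by packaging them via this induction.
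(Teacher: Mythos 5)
Your proposal is correct and follows essentially the same route as the paper: same generating element, same Hecke operators $\theta_1,\theta_2$, same appeal to Gejima's Cartan decomposition and the partial order lemma, and the same induction on $m(\mu',\mu)$. You are slightly more explicit than the paper in observing that the $\xi_{\mu',\mu}$-coefficient of $(\theta_1\otimes\theta_2)\cdot\xi_0$ is always nonzero (via the factorization $\mathbf{g}(\mu',\mu)=\mathbf{t}(\mu')B\mathbf{t}(\mu)$ with $B\in\GSp_4(\mathbf{Z}_\ell)$ and positivity of the integrand), which is the implicit step the paper relies on for both the base case and the inductive step; this is a useful clarification rather than a different argument.
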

\subsection{Passing to $\mathcal{I}(G(\mathbf{Q}_\ell)/G(\mathbf{Z}_\ell))$} We write $A$ for the rank one torus given by $Z_{\GL_2}$. We have an embedding of Hecke algebras 
$$\Delta:\mathcal{H}_{A(\mathbf{Q}_\ell)}^\circ\longrightarrow\mathcal{H}_{\GL_2(\mathbf{Q}_\ell)}^\circ,\ \ch(\left[\begin{smallmatrix}
    \ell & \\
    & \ell
\end{smallmatrix}\right]A(\mathbf{Z}_\ell))\mapsto\ch(\left[\begin{smallmatrix}
    \ell & \\
    & \ell
\end{smallmatrix}\right]\GL_2(\mathbf{Z}_\ell)).$$
\begin{lem}[\cite{Loeffler_2021}]\label{lem Hecke alg hom}
    There exists a Hecke algebra homomorphism $\zeta:\mathcal{H}_{\GL_2(\mathbf{Q}_\ell)}^\circ\rightarrow \mathcal{H}_{A(\mathbf{Q}_\ell)}^\circ$ such that $\theta\cdot\ch(\mathbf{Z}_\ell^2)=(\Delta\circ\zeta)(\theta)\cdot \ch(\mathbf{Z}_\ell^2)$ for all $\theta\in\mathcal{H}_{\GL_2(\mathbf{Q}_\ell)}^\circ.$
\end{lem}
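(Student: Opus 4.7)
The plan is to establish existence and uniqueness of $\zeta(\theta)$ on a set of algebra generators of $\mathcal{H}_{\GL_2(\mathbf{Q}_\ell)}^\circ$ and then extend multiplicatively. Uniqueness, and automaticity of multiplicativity, come for free: $\Delta$ is injective, and the central elements $\Delta(\ch(\mathrm{diag}(\ell^n,\ell^n)A(\mathbf{Z}_\ell)))$ act on $\ch(\mathbf{Z}_\ell^2)$ as the Schwartz functions $\ch(\ell^{-n}\mathbf{Z}_\ell^2)$, which are linearly independent as $n$ varies over $\mathbf{Z}$. Hence $\Delta(\mathcal{H}_{A(\mathbf{Q}_\ell)}^\circ)$ acts faithfully on the cyclic vector $\ch(\mathbf{Z}_\ell^2)$, which forces $\zeta(\theta)$ to be unique when it exists and forces $\zeta(\theta_1\theta_2)=\zeta(\theta_1)\zeta(\theta_2)$ whenever the defining identity holds for $\theta_1,\theta_2$, and their product. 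By the Satake isomorphism, $\mathcal{H}_{\GL_2(\mathbf{Q}_\ell)}^\circ$ is generated as a commutative algebra by $S_\ell^{\pm 1}$ and $T_\ell$, where $S_\ell:=\ch(\mathrm{diag}(\ell,\ell)\GL_2(\mathbf{Z}_\ell))$ and $T_\ell:=\ch(\GL_2(\mathbf{Z}_\ell)\mathrm{diag}(\ell,1)\GL_2(\mathbf{Z}_\ell))$, so it suffices to construct $\zeta$ on these.

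The central generator $S_\ell$ is immediate: setting $\zeta(S_\ell):=\ch(\mathrm{diag}(\ell,\ell)A(\mathbf{Z}_\ell))$ gives $\Delta(\zeta(S_\ell))=S_\ell$ by definition of $\Delta$. For $T_\ell$ I would decompose the double coset into its $\ell+1$ \emph{left} cosets $g_i\GL_2(\mathbf{Z}_\ell)$ indexed by $[a:b]\in\mathbf{P}^1(\mathbf{F}_\ell)$, choosing the $g_i$ so that the super-lattice $\mathbf{Z}_\ell^2 g_i^{-1}\subseteq \ell^{-1}\mathbf{Z}_\ell^2$ of index $\ell$ over $\mathbf{Z}_\ell^2$ has image equal to the line $[a:b]$ in $\ell^{-1}\mathbf{Z}_\ell^2/\mathbf{Z}_\ell^2\cong\mathbf{F}_\ell^2$. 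A concrete choice is $g_c:=\left[\begin{smallmatrix}\ell & -c \\ 0 & 1\end{smallmatrix}\right]$ for $c\in\{0,\dots,\ell-1\}$ together with $g_\infty:=\left[\begin{smallmatrix}1 & 0\\ 0 & \ell\end{smallmatrix}\right]$. The spherical Hecke action then reads $T_\ell\cdot\ch(\mathbf{Z}_\ell^2)=\sum_i\ch(\mathbf{Z}_\ell^2 g_i^{-1})=\sum_L\ch(L)$, summed over the $\ell+1$ super-lattices. Each element of $\mathbf{Z}_\ell^2$ lies in all $\ell+1$ of the $L$'s, while each element of $\ell^{-1}\mathbf{Z}_\ell^2\setminus\mathbf{Z}_\ell^2$ lies in exactly one of them (the unique line containing its non-zero image in $\mathbf{F}_\ell^2$), so the sum collapses to
\[T_\ell\cdot\ch(\mathbf{Z}_\ell^2)=\ell\cdot\ch(\mathbf{Z}_\ell^2)+\ch(\ell^{-1}\mathbf{Z}_\ell^2).\]
One then sets $\zeta(T_\ell):=\ell\cdot\ch(A(\mathbf{Z}_\ell))+\ch(\mathrm{diag}(\ell,\ell)A(\mathbf{Z}_\ell))$ and verifies the identity directly.

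The step I expect to be the main obstacle is the coset bookkeeping. The classical Hecke-operator representatives $\left[\begin{smallmatrix}1 & b\\ 0 & \ell\end{smallmatrix}\right]$ for $b=0,\dots,\ell-1$ together with $\mathrm{diag}(\ell,1)$ are distinct \emph{right} cosets of $\GL_2(\mathbf{Z}_\ell)$ inside the double coset, but a short calculation (for instance $\left[\begin{smallmatrix}1 & 0\\ 0 & \ell\end{smallmatrix}\right]^{-1}\left[\begin{smallmatrix}1 & 1\\ 0 & \ell\end{smallmatrix}\right]=\left[\begin{smallmatrix}1 & 1\\ 0 & 1\end{smallmatrix}\right]\in\GL_2(\mathbf{Z}_\ell)$) shows that they all collapse modulo right multiplication by $\GL_2(\mathbf{Z}_\ell)$, so they cannot be used as left-coset representatives in the convolution formula $\theta\cdot\phi(v)=\sum_i\phi(vg_i)$. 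One must instead use a genuinely different representative set, such as the $g_c,g_\infty$ above, and then check that the $\ell+1$ super-lattices $\mathbf{Z}_\ell^2 g_i^{-1}$ enumerate without repetition the index-$\ell$ super-lattices of $\mathbf{Z}_\ell^2$ in $\ell^{-1}\mathbf{Z}_\ell^2$; once this is secured the combinatorial collapse above finishes the proof.
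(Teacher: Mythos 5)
The paper offers no proof of its own here; it simply cites \cite[Lemma $4.3.3$]{Loeffler_2021}, so there is nothing to compare against directly. Your computation of the generator relation is correct and is the real content: the left-coset bookkeeping with the representatives $g_c=\left[\begin{smallmatrix}\ell & -c\\ 0 & 1\end{smallmatrix}\right]$, $g_\infty=\left[\begin{smallmatrix}1 & 0\\ 0 & \ell\end{smallmatrix}\right]$ is handled carefully, and the collapse $T_\ell\cdot\ch(\mathbf{Z}_\ell^2)=\ell\,\ch(\mathbf{Z}_\ell^2)+\ch(\ell^{-1}\mathbf{Z}_\ell^2)$ is right, as are $\zeta(T_\ell)$ and $\zeta(S_\ell^{\pm 1})$.

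However, the ``extend multiplicatively'' step is not quite closed as written. You show, correctly, that $\zeta(\theta)$ is unique when it exists and that $\zeta(\theta_1\theta_2)=\zeta(\theta_1)\zeta(\theta_2)$ \emph{provided the defining identity already holds for $\theta_1\theta_2$}; but you do not show that the defining identity for the product follows from the identities for the factors, which is what you need to get $\zeta$ on all of $\mathcal{H}^\circ_{\GL_2(\mathbf{Q}_\ell)}$. The missing ingredient is commutativity of $\mathcal{H}^\circ_{\GL_2(\mathbf{Q}_\ell)}$: if the identity holds for $\theta_1$ on $\ch(\mathbf{Z}_\ell^2)$, then for any $z=\Delta(\alpha)$ with $\alpha\in\mathcal{H}^\circ_{A(\mathbf{Q}_\ell)}$ one has $\theta_1\cdot(z\cdot\ch(\mathbf{Z}_\ell^2))=z\cdot\theta_1\cdot\ch(\mathbf{Z}_\ell^2)=z\cdot(\Delta\zeta)(\theta_1)\cdot\ch(\mathbf{Z}_\ell^2)=(\Delta\zeta)(\theta_1)\cdot(z\cdot\ch(\mathbf{Z}_\ell^2))$, i.e. the identity for $\theta_1$ holds not just against $\ch(\mathbf{Z}_\ell^2)$ but against all of $\mathcal{S}(\mathbf{Q}_\ell^2)^{\GL_2(\mathbf{Z}_\ell)}=\Delta(\mathcal{H}^\circ_{A(\mathbf{Q}_\ell)})\cdot\ch(\mathbf{Z}_\ell^2)$. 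This is exactly what the inductive step on monomials needs. Equivalently, and more structurally: your linear-independence observation shows that $\mathcal{S}(\mathbf{Q}_\ell^2)^{\GL_2(\mathbf{Z}_\ell)}$ is free of rank one over $\Delta(\mathcal{H}^\circ_{A(\mathbf{Q}_\ell)})$; the $\mathcal{H}^\circ_{\GL_2(\mathbf{Q}_\ell)}$-action commutes with this subalgebra, hence lands in $\mathrm{End}_{\Delta(\mathcal{H}^\circ_{A})}\bigl(\mathcal{S}(\mathbf{Q}_\ell^2)^{\GL_2(\mathbf{Z}_\ell)}\bigr)\simeq\Delta(\mathcal{H}^\circ_{A(\mathbf{Q}_\ell)})$, which produces $\zeta$ directly as a ring homomorphism with no case-by-case extension needed. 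Your generator calculations then identify $\zeta(S_\ell^{\pm 1})$ and $\zeta(T_\ell)$ explicitly.
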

\begin{proof}
    This is \cite[Lemma $4.3.3$]{Loeffler_2021}
\end{proof}
\begin{thm}\label{thm: cyclicity of I}
    The $\mathcal{H}_{G(\mathbf{Q}_\ell)}^\circ$-module $\mathcal{I}(G(\mathbf{Q}_\ell)/G(\mathbf{Z}_\ell))$ cyclic and generated by $\ch(\mathbf{Z}_\ell^2)\otimes \ch(G(\mathbf{Z}_\ell)).$
    \begin{proof}
        The proof of this is inspired by the proof of \cite[Theorem $4.3.6$]{Loeffler_2021}. However, one needs to be a bit careful at certain points since in this setup, our group $H$ involves two copies of $\GL_2$ rather than one. We firstly note that there is a canonical isomorphism \begin{align}\label{eq: 1}C_c^\infty(G(\mathbf{Q}_\ell)/G(\mathbf{Z}_\ell))&\simeq C_c^\infty(\GSp_4(\mathbf{Q}_\ell)/\GSp_4(\mathbf{Z}_\ell))\otimes C_c^\infty(\GL_2(\mathbf{Q}_\ell)/\GL_2(\mathbf{Z}_\ell))\\
        \nonumber\xi&\mapsto \xi|_{\GSp_4(\mathbf{Q}_\ell)}\otimes \xi|_{\GL_2(\mathbf{Q}_\ell)}.
        \end{align}
        This is clearly equivariant under the Hecke action of $\mathcal{H}_{G(\mathbf{Q}_\ell)}^\circ\simeq \mathcal{H}_{\GSp_4(\mathbf{Q}_\ell)}^\circ\otimes \mathcal{H}_{\GL_2(\mathbf{Q}_\ell)}^\circ$. It is also $H(\mathbf{Q}_\ell)$-equivariant where the $C_c^\infty(\GL_2(\mathbf{Q}_\ell)/\GL_2(\mathbf{Z}_\ell))$-factor on the right of \eqref{eq: 1} is regarded as a $H(\mathbf{Q}_\ell)$-representation in the natural way after projecting to the second factor. Thus, after tensoring with $\mathcal{S}(\mathbf{Q}_\ell^2)$ and taking $H(\mathbf{Q}_\ell)$-coinvariants we obtain a Hecke equivariant isomorphism 
        \begin{align}\label{eq: 2}
            \mathcal{I}(G(\mathbf{Q}_\ell)/G(\mathbf{Z}_\ell))\simeq \left[\mathcal{S}(\mathbf{Q}_\ell^2)\otimes C_c^\infty(\GSp_4(\mathbf{Q}_\ell)/\GSp_4(\mathbf{Z}_\ell))\otimes C_c^\infty(\GL_2(\mathbf{Q}_\ell)/\GL_2(\mathbf{Z}_\ell)) \right]_{H(\mathbf{Q}_\ell)}.
        \end{align}
        We start with an arbitrary element $[\delta]:=[\phi\otimes\xi]\in \mathcal{I}(G(\mathbf{Q}_\ell)/G(\mathbf{Z}_\ell))$ where the square brackets always denote the class under coinvariants. We may assume that $\phi$ is $A(\mathbf{Z}_\ell)$-invariant and thus by the proof of \cite[Proposition $4.3.4$]{Loeffler_2021} we may assume that $\phi=\theta\cdot_H\phi_0$ for some $\theta\in C_c^\infty(H(\mathbf{Q}_\ell)/H(\mathbf{Z}_\ell))$, where $\phi_0:=\ch(\mathbf{Z}_\ell^2)$ and $\cdot_H$ denotes the Hecke action over $H$. Thus $[\delta]=[\phi_0\otimes(\theta'\cdot_H\xi)]$ where recall that $(\theta'\cdot_H\xi)(g)=\int_{H(\mathbf{Q}_\ell)}\theta'(h)\xi(h^{-1}g)\ dh$ for all $g\in G(\mathbf{Q}_\ell).$ Under \eqref{eq: 1} we have $\theta'\cdot_H\xi\simeq\tilde{\xi}_1\otimes\tilde{\xi}_2$ where 
        \begin{align*}
            \tilde{\xi}_1&:=(\theta'\cdot_H\xi)|_{\GSp_4(\mathbf{Q}_\ell)}\in C_c^\infty(H(\mathbf{Z}_\ell)\backslash \GSp_4(\mathbf{Q}_\ell)/\GSp_4(\mathbf{Z}_\ell))\\
            \tilde{\xi}_2&:=(\theta'\cdot_H\xi)|_{ \GL_2(\mathbf{Q}_\ell)}\in\mathcal{H}_{\GL_2(\mathbf{Q}_\ell)}^\circ.
        \end{align*}
        This follows from an unraveling of the action and recalling that we always regard $H$ as a subgroup of $G$ under $\iota$. Hence, under \eqref{eq: 2} we may assume that $[\delta]=[\phi_0\otimes \tilde{\xi}_1\otimes\ch(\GL_2(\mathbf{Z}_\ell))].$ But by \Cref{cor: cyclicity ala Gejima} and lineariy, we can assume that $\tilde{\xi}_1$ is given by $(\theta_1\otimes\theta_2)\cdot\ch(\GSp_4(\mathbf{Z}_\ell))=\theta_1\cdot_H\theta_2'$ with $\theta_1\in\mathcal{H}_{H(\mathbf{Q}_\ell)}^\circ$ and $\theta_2\in\mathcal{H}_{\GSp_4(\mathbf{Q}_\ell)}^\circ$. We have
        \begin{align*}
            [\delta]&=[\phi_0\otimes(\theta_1\cdot_H\theta_2')\otimes\ch(\GL_2(\mathbf{Z}_\ell))]\\
            &\simeq[\theta_1'\cdot_H(\phi_0\otimes\ch(\GL_2(\mathbf{Z}_\ell)))\otimes\theta_2'].
        \end{align*}
        Again by linearity, we may also assume that $\theta_1'=\ch(H(\mathbf{Z}_\ell)hH(\mathbf{Z}_\ell))$ for some $h\in H(\mathbf{Q}_\ell)$. Using Cartan decomposition, it's straight forward to see that $\theta_1'\cdot_H(\phi_0\otimes\ch(\GL_2(\mathbf{Z}_\ell)))$ coincides with $(\theta_3\cdot\phi_0)\otimes\theta_4$ where $\theta_3:=\ch(\GL_2(\mathbf{Z}_\ell)h_1\GL_2(\mathbf{Z}_\ell))$ and $\theta_4:=\ch(\GL_2(\mathbf{Z}_\ell)h_2\GL_2(\mathbf{Z}_\ell))$. Putting this together, we may assume that 
        \begin{align*}[\delta]&=[(\theta_3\cdot\phi_0)\otimes\ch(\GL_2(\mathbf{Z}_\ell))\otimes\ch(\GSp_4(\mathbf{Z}_\ell))],\ \theta_3\in\mathcal{H}_{\GL_2(\mathbf{Q}_\ell)}^\circ\\
        &=[(\Delta(\alpha)\cdot\phi_0)\otimes\ch(\GL_2(\mathbf{Z}_\ell))\otimes\ch(\GSp_4(\mathbf{Z}_\ell))],\ \alpha\in\mathcal{H}_{A(\mathbf{Q}_\ell)}^\circ
        \end{align*}
        where the last equality follows from \Cref{lem Hecke alg hom}. By linearity and the definition of the map $\Delta$, we can assume that $\Delta(\alpha)=\ch(\mathrm{diag}(\ell^i,\ell^i)\GL_2(\mathbf{Z}_\ell))$, $i\in\mathbf{Z}$, in which case we have 
        $$[\delta]=[\phi_0\otimes\ch(\mathrm{diag}(\ell^{-i},\ell^{-i})\GL_2(\mathbf{Z}_\ell))\otimes\ch(\mathrm{diag}(\ell^{-i},\ell^{-i},\ell^{-i},\ell^{-i})\GSp_4(\mathbf{Z}_\ell))].$$
        This finally concludes the proof.
    \end{proof}
\end{thm}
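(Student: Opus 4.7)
The plan is to reduce an arbitrary class $[\phi\otimes\xi]\in \mathcal{I}(G(\mathbf{Q}_\ell)/G(\mathbf{Z}_\ell))$ to a scalar multiple of the generator $[\phi_0\otimes\ch(G(\mathbf{Z}_\ell))]$ in a sequence of reductions, exploiting the canonical tensor decomposition
$$C_c^\infty(G(\mathbf{Q}_\ell)/G(\mathbf{Z}_\ell))\simeq C_c^\infty(\GSp_4(\mathbf{Q}_\ell)/\GSp_4(\mathbf{Z}_\ell))\otimes C_c^\infty(\GL_2(\mathbf{Q}_\ell)/\GL_2(\mathbf{Z}_\ell))$$
(which is equivariant under both the natural Hecke action of $\mathcal{H}_{G(\mathbf{Q}_\ell)}^\circ$ and the $H(\mathbf{Q}_\ell)$-action via $\iota$, the latter acting on the $\GL_2$-factor through the second projection). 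The key inputs are the Gejima cyclicity statement in \Cref{cor: cyclicity ala Gejima} on the $\GSp_4$-side, and the unramified-Schwartz-function analysis of \cite[\S 4.3]{Loeffler_2021}, including \Cref{lem Hecke alg hom}, on the $\GL_2$-side.

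First, I would average $\phi$ to be $A(\mathbf{Z}_\ell)$-invariant (harmless, since $A$ is contained in the center of $H$), and then invoke the argument of \cite[Prop.~4.3.4]{Loeffler_2021} to write $\phi=\theta\cdot_H\phi_0$ for some $\theta\in C_c^\infty(H(\mathbf{Q}_\ell)/H(\mathbf{Z}_\ell))$. Passing $\theta$ across the coinvariants rewrites the class as $[\phi_0\otimes(\theta'\cdot_H\xi)]$. Under the tensor decomposition above, $\theta'\cdot_H\xi$ factors as $\tilde{\xi}_1\otimes\tilde{\xi}_2$ with $\tilde{\xi}_1\in C_c^\infty(H(\mathbf{Z}_\ell)\backslash\GSp_4(\mathbf{Q}_\ell)/\GSp_4(\mathbf{Z}_\ell))$ and $\tilde{\xi}_2\in\mathcal{H}_{\GL_2(\mathbf{Q}_\ell)}^\circ$, which brings \Cref{cor: cyclicity ala Gejima} into play: I can assume $\tilde{\xi}_1=(\theta_1\otimes\theta_2)\cdot\ch(\GSp_4(\mathbf{Z}_\ell))$ for appropriate Hecke operators $\theta_1\in \mathcal{H}_{H(\mathbf{Q}_\ell)}^\circ$ and $\theta_2\in\mathcal{H}_{\GSp_4(\mathbf{Q}_\ell)}^\circ$.

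I would then push $\theta_1$ back across the coinvariants, using that via the first projection $\mathrm{pr}_1$ of $H$ onto $\GL_2$ it acts on $\phi_0$ by an element of $\mathcal{H}_{\GL_2(\mathbf{Q}_\ell)}^\circ$, while through the second projection it acts on $\ch(\GL_2(\mathbf{Z}_\ell))$ by another spherical Hecke operator on the $\GL_2$-factor of $G$. Applying \Cref{lem Hecke alg hom} converts the action on $\phi_0$ into an action of the central torus $A$ (and hence, by $\Delta$, of a central scalar Hecke operator in $\mathcal{H}_{\GL_2(\mathbf{Q}_\ell)}^\circ$), after which linearity reduces to the case of a diagonal scalar $\diag(\ell^{-i},\ell^{-i})$. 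That shift can be simultaneously matched by the central scalar $\diag(\ell^{-i},\ldots,\ell^{-i})$ on the $\GSp_4$-factor to land everything in the $G$-Hecke orbit of $[\phi_0\otimes \ch(\GL_2(\mathbf{Z}_\ell))\otimes\ch(\GSp_4(\mathbf{Z}_\ell))]$, giving cyclicity.

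The main obstacle is the bookkeeping at the two places where one transports Hecke elements across the coinvariants: one must carefully use that $H=\GL_2\times_{\GL_1}\GL_2$ embeds with its two $\GL_2$ factors playing asymmetric roles (one sits inside $\GSp_4$ via $\iota$, the other is the $\GL_2$-component of $G$), so that the single operator $\theta_1\in\mathcal{H}_{H(\mathbf{Q}_\ell)}^\circ$ decomposes consistently as a pair of spherical $\GL_2$-operators acting on the two factors. Checking this compatibility, together with confirming that only \emph{central} scalar operators remain on the Schwartz side after invoking \Cref{lem Hecke alg hom}, is what allows the final absorption into a single $\mathcal{H}_{G(\mathbf{Q}_\ell)}^\circ$-action on the candidate generator.
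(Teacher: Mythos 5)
Your proposal is correct and follows essentially the same route as the paper's proof: the tensor decomposition of $C_c^\infty(G(\mathbf{Q}_\ell)/G(\mathbf{Z}_\ell))$, averaging $\phi$ to $A(\mathbf{Z}_\ell)$-invariance and invoking \cite[Prop.~4.3.4]{Loeffler_2021} to write $\phi=\theta\cdot_H\phi_0$, transporting across coinvariants to isolate $\tilde{\xi}_1\in C_c^\infty(H(\mathbf{Z}_\ell)\backslash\GSp_4(\mathbf{Q}_\ell)/\GSp_4(\mathbf{Z}_\ell))$, applying \Cref{cor: cyclicity ala Gejima}, pushing the resulting $\theta_1\in\mathcal{H}^\circ_{H(\mathbf{Q}_\ell)}$ back across the coinvariants so that its two $\GL_2$-components act respectively on $\phi_0$ and on $\ch(\GL_2(\mathbf{Z}_\ell))$, and finally using \Cref{lem Hecke alg hom} to reduce the Schwartz-side action to a central scalar which is then matched on the $\GSp_4$- and $\GL_2$-factors of $G$. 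The only cosmetic difference is that you do not explicitly note that $\tilde{\xi}_2\in\mathcal{H}^\circ_{\GL_2(\mathbf{Q}_\ell)}$ is already in the $G$-Hecke orbit of $\ch(\GL_2(\mathbf{Z}_\ell))$ before invoking Gejima, but that reduction is implicit in your phrasing.
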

\subsection{The principal-series and Siegel sections}\label{sec: siegel sections}
Let $\psi:\mathbf{Q}_\ell\rightarrow\mathbf{C}^\times$ be the standard additive character of conductor $\mathbf{Z}_\ell$. For $\nu,\mu$ smooth characters of $\mathbf{Q}_\ell^\times$ with $\nu\mu^{-1}\neq |\cdot|^{\pm 1}$. We write $I(\nu,\mu)$ for the irreducible generic principal-series representation of $\GL_2(\mathbf{Q}_\ell)$ given by the normalized parabolic induction of the character $\left[\begin{smallmatrix}
    \nu & \\
    & \mu
\end{smallmatrix}\right]$ of the Borel. Given $\phi\in\mathcal{S}(\mathbf{Q}_\ell^2)$, we set 
$$f^\phi(h;\nu,\mu,s):=\nu(\det(h))|\det(h)|^s\int_{\mathbf{Q}_\ell^\times}\phi((0,1)h)(\nu/\mu)(x)|x|^{2s}\ d^\times x,\ \ h\in\GL_2(\mathbf{Q}_\ell)$$
which converges for $\Re(s)$ large enough. Using this, we consider 
\begin{align*}
W^\phi(h;\nu,\mu,s):=\int_{\mathbf{Q}_\ell}f^\phi(\left[\begin{smallmatrix}
    & 1\\
    -1 & 
\end{smallmatrix}\right]\left[\begin{smallmatrix}
    1 & x\\
    & 1
\end{smallmatrix}\right]h;\nu,\mu,s)\psi(x)\ dx,\ \ h\in\GL_2(\mathbf{Q}_\ell).
\end{align*}
As in \cite{loeffler2023localzetaintegralsgsp4gsp4}, we write $W^\phi(h;\nu,\mu):=W^\phi(h;\nu,\mu,\tfrac{1}{2})$ and $W^\phi(\nu,\mu)$ for the function $W^\phi(-;\nu,\mu)$ on $\GL_2(\mathbf{Q}_\ell).$ Note that even though $f^\phi$ might have poles, $W^\phi$ is entire and there is no $s\in\mathbf{C}$ for which $W^\phi(h;\nu,\mu,s)$ vanishes for all $h$ and $\phi$. The space of functions $W^\phi(\nu,\mu)$ for varying $\phi\in\mathcal
S(\mathbf{Q}_\ell^2)$ is the $\psi^{-1}$-Whittaker model of $I(\nu,\mu)$.

Now let $\chi_1,\chi_2,\chi$ be three smooth characters of $\mathbf{Q}_\ell^\times$ for which $$|\cdot|^{\pm 1}\not\in\{\chi_1,\chi_2,\chi_1\chi_2,\chi_1\chi_2^{-1}\}.$$
We write $I(\chi_1,\chi_2;\chi)$ for the irreducible principal-series representation of $\GSp_4(\mathbf{Q}_\ell)$ given by the normalized parabolic induction of the character 
$$\left[\begin{smallmatrix}
    a & & & \\
    & b & & \\
    & & cb^{-1} & \\
    & & & ca^{-1}
\end{smallmatrix}\right]\mapsto\chi_1(a)\chi_2(b)\chi(c)$$
of the Borel. Every such representation is generic and can be idenitifed with its $\psi$-Whittaker model $\mathcal{W}(\Pi,\psi)$, which is a space of smooth functions $W:\GSp_4(\mathbf{Q}_\ell)\rightarrow\mathbf{C}$ satisying 
$$W(\left[\begin{smallmatrix}
    1 & y & *& * \\
    & 1& x& *\\
    & & 1& -y \\
    & & & 1
\end{smallmatrix}\right]g)=\psi(x+y)W(g),\ g\in\GSp_4(\mathbf{Q}_\ell),\ x,y\in \mathbf{Q}_\ell.$$ 

\subsection{Novodvorsky’s zeta-integral}\label{sec zeta integrals}
Once again, we let $\psi:\mathbf{Q}_\ell\rightarrow\mathbf{C}^\times$ be the standard additive character of conductor $\mathbf{Z}_\ell$. Let $\Pi$ be an irreducible admissible generic representation of $\GSp_4(\mathbf{Q}_\ell)$, and $\pi$ an irreducible admissible generic representation of $\GL_2(\mathbf{Q}_\ell)$. We identify $\Pi$ with its $\psi$-Whittaker model and $\pi$ with its $\psi^{-1}$-Whittaker model.
\begin{defn}[\cite{novodvorsky1979automorphic}] Let $\phi\in\mathcal{S}(\mathbf{Q}_\ell^2), W_\Pi\in\mathcal{W}(\Pi,\psi)$ and $W_\pi\in \mathcal{W}(\pi,\psi^{-1})$. We set
$$Z(\phi,W_\Pi,W_\pi;s):=\int_{Z_{\GL_2}(\mathbf{Q}_\ell)N(\mathbf{Q}_\ell)\backslash H(\mathbf{Q}_\ell)}W_\Pi(h)W_\pi(h_2)f^\phi(h_1;1,\omega_\Pi^{-1}\omega_\pi^{-1},s)\ dh.$$
\end{defn}

\begin{thm}[\cite{novodvorsky1979automorphic}, \cite{soudry1984��}, \cite{loeffler2021higher}] There exists $R<\infty$ independent of $\Pi$ and $\pi$ such that the integral $Z(\phi,W_\Pi,W_{\pi};s)$ converges $\Re(s)>R$ and has meromorphic continuation as a rational function of $\ell^s.$ The $\mathbf{C}$-vector space spanned by $Z(\phi,W_\Pi,W_{\pi};s)$ for varying $(\phi,W_\Pi,W_{\pi})$, is a fractional ideal of $\mathbf{C}[\ell^s,\ell^{-s}]$, containing the constant functions.
\end{thm}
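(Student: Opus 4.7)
The plan is to establish the three assertions — convergence for $\Re(s)$ large, meromorphic continuation as a rational function of $\ell^s$, and the fractional-ideal-containing-constants property — by following the standard template for Rankin--Selberg zeta integrals laid down in \cite{novodvorsky1979automorphic}, \cite{loeffler2021higher}. The essential analytic input is the asymptotic theory of Whittaker functions via Jacquet modules.

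For convergence, I would first choose an Iwasawa-type decomposition of $H(\mathbf{Q}_\ell)$ reducing the integral over $Z_{\GL_2}(\mathbf{Q}_\ell) N(\mathbf{Q}_\ell) \backslash H(\mathbf{Q}_\ell)$ to one over the diagonal torus of $H$ times a compact piece, picking up the appropriate modular character. The standard bounds on Whittaker functions — namely, that for $t$ in a negative cone, $W_\Pi(t)$ (resp. $W_\pi(t_2)$) is a finite sum of torus characters times polynomials in $v_\ell(t)$, with the characters coming from exponents of the Jacquet module — reduce the integrand to an explicit finite sum of geometric series in $\ell^{-s}$. These converge for $\Re(s) > R$, where $R$ is controlled purely by these exponents. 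The uniformity of $R$ across all $(\Pi, \pi)$ then rests on the fact that exponents of Jacquet modules of irreducible admissible \emph{generic} representations of $\GL_2$ and $\GSp_4$ are uniformly bounded; this is the step I regard as the most delicate to pin down carefully, though it is essentially classical.

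For meromorphic continuation and rationality, I would observe that the asymptotic expansion used above has already expressed $Z(\phi,W_\Pi,W_\pi;s)$ in its domain of convergence as a finite sum of geometric series in $\ell^{-s}$, hence \emph{a fortiori} as a rational function of $\ell^s$. The meromorphic continuation to $\mathbf{C}$ is then automatic by analytic continuation of rational functions; alternatively, Bernstein's continuation principle packages this formally.

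For the fractional ideal property: translating $\phi$ by $\diag(a,1)$ for $a \in \mathbf{Q}_\ell^\times$ transforms $Z(\phi,W_\Pi,W_\pi;s)$ into a scalar multiple of itself, where the scalar is $|a|^{2s}$ times a constant depending on the central characters. Taking $a = \ell^{\pm 1}$ shows the span is stable under multiplication by $\ell^{\pm s}$ and hence is a $\mathbf{C}[\ell^s,\ell^{-s}]$-submodule of $\mathbf{C}(\ell^s)$, i.e.\ a fractional ideal. To exhibit a non-zero constant in this ideal, I would take $\phi = \ch(v_0 + \ell^N \mathbf{Z}_\ell^2)$ with $v_0$ in a suitable open $H(\mathbf{Q}_\ell)$-orbit close to $(0,1)$, and $W_\Pi, W_\pi$ supported on sufficiently small neighborhoods of the identity in their respective groups, using that Whittaker functions can be prescribed freely on small enough opens. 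For $N$ large, the integrand is supported on a tiny set on which all factors are essentially constant in $s$, so the integral collapses to a non-zero scalar independent of $s$. The explicit version of this computation in the $\GSp_4 \times \GL_2$ setting is carried out in \cite[\S 8]{loeffler2021higher}, and would be the reference of choice.
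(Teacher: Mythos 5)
The paper does not give a proof of this theorem: it is stated as a quoted result from \cite{novodvorsky1979automorphic}, \cite{soudry1984}, and \cite{loeffler2021higher}, with the cited sources carrying the argument. So there is no ``paper's own proof'' against which to compare; your sketch has to be judged as a reconstruction of the standard literature argument.

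As such a reconstruction it is essentially right. The Iwasawa reduction plus Jacquet-module asymptotics of Whittaker functions is the correct engine for both convergence and rationality in $\ell^{\pm s}$, and you are right to flag the uniformity of $R$ as the genuinely delicate point; it rests on a uniform bound on exponents across all irreducible generic representations, which requires a normalization of central characters and is not automatic from abstract admissibility. The argument for the ``contains the constants'' part, via data with small support, is the standard one and is exactly how \cite[\S 8]{loeffler2021higher} handles it. The one place I would push back is the fractional-ideal step: translating $\phi$ by $\mathrm{diag}(a,1)$ does not interact cleanly with the $H$-equivariance of the integral, since the Siegel section $f^\phi$ is built from $\phi((0,1)h_1)$ and a generic $\mathrm{diag}(a,1)$ shift does not commute past the defining integral. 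The clean move is to replace $\phi$ by $\phi(\ell\,\cdot)$ (a central scaling), which produces the factor $\ell^{2s}$ directly from the change of variables $x\mapsto \ell^{-1}x$ inside $f^\phi$, or equivalently to translate the whole Whittaker data by central elements and track the resulting $|\cdot|^s$ factors. The idea is the same and the conclusion is correct; just the specific matrix choice should be amended.
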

\begin{defn}
    We write $L^\mathrm{Nov}(\Pi\times\pi,s)$ for the unique $L$-factor generating the fractional ideal of $\mathbf{C}[\ell^s,\ell^{-s}]$, spanned by the integrals $Z(\phi,W_\Pi,W_{\pi};s).$ 
\end{defn}
\begin{prop}\label{lem linear form} Let $\Pi,\pi$ be irreducible, unramified, generic representations of $\GSp_4(\mathbf{Q}_\ell),\GL_2(\mathbf{Q}_\ell)$ respectively.
The space $\mathrm{Hom}_{H(\mathbf{Q}_\ell)}(\mathcal{S}(\mathbf{Q}_\ell^2)\otimes \Pi\otimes\pi,\mathbf{1})$ is one-dimensional, with generating element given by $$\mathcal{Z}:\phi\otimes W_\Pi\otimes  W_\pi\mapsto\lim_{s\rightarrow 0}\frac{Z(\phi, W_\Pi, W_\pi;s)}{L^\mathrm{Nov}(\Pi\times\pi,s)}$$
which maps the unramified vector $\ch(\mathbf{Z}_\ell^2)\otimes W_\Pi^\mathrm{sph}\otimes W_\pi^\mathrm{sph}$ to $1$.
\end{prop}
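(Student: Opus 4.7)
The plan is to prove the proposition in two halves: establish the upper bound $\dim\leq 1$ using the cyclicity theorem, then construct the generator $\mathcal{Z}$ via the Novodvorsky integral to show $\dim\geq 1$ and verify the normalization.

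For the upper bound, since $\Pi\otimes\pi$ is irreducible admissible and unramified, the $G(\mathbf{Q}_\ell)$-equivariant map
\[ F:C_c^\infty(G(\mathbf{Q}_\ell)/G(\mathbf{Z}_\ell))\twoheadrightarrow \Pi\otimes\pi,\quad \ch(gG(\mathbf{Z}_\ell))\mapsto g\cdot(W_\Pi^{\mathrm{sph}}\otimes W_\pi^{\mathrm{sph}}) \]
is surjective: its image is a nonzero $G(\mathbf{Q}_\ell)$-stable subspace. Tensoring with $\mathcal{S}(\mathbf{Q}_\ell^2)$ (with its $H$-action through $\iota$) and taking $H(\mathbf{Q}_\ell)$-coinvariants, which is right exact, produces a surjection $\mathcal{I}(G(\mathbf{Q}_\ell)/G(\mathbf{Z}_\ell))\twoheadrightarrow [\mathcal{S}(\mathbf{Q}_\ell^2)\otimes\Pi\otimes\pi]_{H(\mathbf{Q}_\ell)}$. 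By \Cref{thm: cyclicity of I}, the source is cyclic as an $\mathcal{H}_{G(\mathbf{Q}_\ell)}^\circ$-module, generated by $[\ch(\mathbf{Z}_\ell^2)\otimes\ch(G(\mathbf{Z}_\ell))]$. Since the spherical Hecke algebra acts on $W_\Pi^{\mathrm{sph}}\otimes W_\pi^{\mathrm{sph}}$ through the Satake character of $\Pi\otimes\pi$, the image of any Hecke translate of the generator is a scalar multiple of $[\ch(\mathbf{Z}_\ell^2)\otimes W_\Pi^{\mathrm{sph}}\otimes W_\pi^{\mathrm{sph}}]$. Hence the target is at most one-dimensional, and by the standard duality $\mathrm{Hom}_H(V,\mathbf{C})\cong (V_H)^*$, the same bound holds for $\mathrm{Hom}_H(\mathcal{S}\otimes\Pi\otimes\pi,\mathbf{1})$.

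For existence and non-triviality, the integral $Z(\phi,W_\Pi,W_\pi;s)$ is visibly $H(\mathbf{Q}_\ell)$-equivariant (integrating against the trivial character over $Z_{\GL_2}N\backslash H$), meromorphic in $\ell^s$, and its values span a fractional ideal of $\mathbf{C}[\ell^s,\ell^{-s}]$ generated by $L^{\mathrm{Nov}}(\Pi\times\pi,s)$. The normalised quantity is therefore regular, and its limit at $s=0$ defines a nonzero element $\mathcal{Z}\in\mathrm{Hom}_{H(\mathbf{Q}_\ell)}(\mathcal{S}\otimes\Pi\otimes\pi,\mathbf{1})$. The normalization $\mathcal{Z}(\ch(\mathbf{Z}_\ell^2)\otimes W_\Pi^{\mathrm{sph}}\otimes W_\pi^{\mathrm{sph}})=1$ reduces to the unramified zeta computation identifying the spherical Novodvorsky integral with $L^{\mathrm{Nov}}(\Pi\times\pi,s)$; this is supplied by the explicit computation of \cite[\S 8]{loeffler2021higher}, which also shows this $L$-factor coincides with the degree-eight Langlands local $L$-factor.

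The principal technical obstacle lies in the upper bound: one must verify that the $\mathcal{H}_{G(\mathbf{Q}_\ell)}^\circ$-action on $\mathcal{I}(G/G(\mathbf{Z}_\ell))$ (defined via right convolution on the $C_c^\infty$-factor, which commutes with the left $H$-action) is compatible, under the surjection induced by $F$, with the convolution action on the spherical vector in $\Pi\otimes\pi$. This requires care with the conventions: the right-convolution action on $C_c^\infty(G/G(\mathbf{Z}_\ell))$ corresponds via $F$ to integration of $G$-translates against the involuted function $\theta'(g)=\theta(g^{-1})$, which in turn scales $W_\Pi^{\mathrm{sph}}\otimes W_\pi^{\mathrm{sph}}$ by the Satake character evaluated at $\theta'$. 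Once this bookkeeping is settled, the cyclicity theorem immediately delivers the one-dimensionality.
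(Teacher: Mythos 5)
Your proof is correct and follows the same route the paper takes: existence and normalization of $\mathcal{Z}$ via the Novodvorsky integral and the unramified computation of \cite[\S 8]{loeffler2021higher}, and one-dimensionality via \Cref{thm: cyclicity of I}. The only difference is that you have unpacked the cyclicity argument (surjection $\mathcal{I}(G(\mathbf{Q}_\ell)/G(\mathbf{Z}_\ell))\twoheadrightarrow[\mathcal{S}(\mathbf{Q}_\ell^2)\otimes\Pi\otimes\pi]_{H(\mathbf{Q}_\ell)}$, Hecke-equivariance, duality) where the paper simply cites the theorem, which is a welcome expansion rather than a divergence.
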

\begin{proof}
    The fact that the linear form $\mathcal{Z}$ satisfies the properties in questions follows from \cite[Theorem $8.9$]{loeffler2021higher} (see also \cite[Proposition $7.10$]{hsu2020eulersystemsmathrmgsp4times}). Uniquness follows from \Cref{thm: cyclicity of I}.
\end{proof}
\begin{lem}\label{lem Nov-poly}
   Let $\Pi$ (resp. $\pi$) be an irreducible unramified principal-series of $\GSp_4(\mathbf{Q}_\ell)$ (resp. $\GL_2(\mathbf{Q}_\ell)$). Write $\Theta_{\Pi\times\pi}$ for the spherical Hecke eigensystem of the $G(\mathbf{Q}_\ell)$-representation $\Pi\times\pi$. There exists a unique (degree eight) polynomial $\mathcal{P}^\mathrm{Nov}_\ell(X)\in\mathcal{H}_{G(\mathbf{Q}_\ell)}^\circ[X]$ such that $\Theta_{\Pi\times\pi}(\mathcal{P}_\ell^\mathrm{Nov})(\ell^{-s})=L^\mathrm{Nov}(\Pi\times\pi,s)^{-1}$ for every such $\Pi\times\pi.$ 
   \begin{proof}
       The dual group of $G$ is given by $G(\mathbf{C})=\GSp_4(\mathbf{C})\times\GL_2(\mathbf{C})$. We write $\hat{T}\subseteq G(\mathbf{C})$ for the direct product of the diagonal tori. Let $\alpha=(\alpha_\Pi,\alpha_\pi)\in\hat{T}/W_G$ be the Satake parameter of $\Pi\times \pi$. If we denote the Satake isomorphism by $\mathcal{S}:\mathcal{H}_{G(\mathbf{Q}_\ell)}^\circ\simeq \mathbf{C}[X^*(\hat{T})]^{W_G}$, then by construction we have
       $\Theta_{\Pi\times\pi}(\theta)=\mathrm{ev}_\alpha(\mathcal{S}(\theta))$
       for all $\theta\in\mathcal{H}_{G(\mathbf{Q}_\ell)}^\circ$. The unramified $L$-parameter associated to $\alpha$ has Artin $L$-factor given by $\det(1-\ell^{-s}\alpha)^{-1}$, and its reciprocal lies in the image of $\mathrm{ev}_\alpha :\mathbf{C}[X^*(\hat{T})]^{W_G}[\ell^{\pm s}]\rightarrow\mathbf{C}[\ell^{\pm s}]$. Thus, it remains to verify that this Artin $L$-factor coincides with the Novodvosrky $L$-factor. Every such $\Pi$ is a theta-lift from a $\mathrm{GSO}_{2,2}$-representation $\tau_1\times\tau_2$ where $\tau_i$ are irreducible unramified $\GL_2$-principal-series, and $\alpha_\Pi$ is the image of $(\alpha_{\tau_1},\alpha_{\tau_2})$ under our embedding $\iota: H\hookrightarrow\GSp_4.$ (see \cite{gan2010thetacorrespondencesgsp4} for more details). Hence, the Artin $L$-factor in question is given by the product of Rankin-Selberg $L$-factors $L(\tau_1\times \pi,s)L(\tau_2\times \pi,s)$. By \cite{soudry1984��}, this coincides with $L^\mathrm{Nov}(\Pi\times\pi,s).$ Finally, uniqueness follows by a density argument in the spectrum of the Hecke algebra.
   \end{proof}
\end{lem}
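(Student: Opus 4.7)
The approach is to work through the Satake isomorphism $\mathcal{S}: \mathcal{H}_{G(\mathbf{Q}_\ell)}^\circ \xrightarrow{\sim} \mathbf{C}[X^\ast(\hat{T})]^{W_G}$, which identifies the spherical Hecke algebra with Weyl-invariant functions on the dual torus of $G = \GSp_4 \times \GL_2$. Under this identification the Hecke eigensystem $\Theta_{\Pi \times \pi}$ corresponds to evaluation at the Satake parameter $\alpha = (\alpha_\Pi, \alpha_\pi) \in (\hat{T}/W_G)(\mathbf{C})$. My plan is to construct a universal polynomial in $X$ whose coefficients lie in $\mathbf{C}[X^\ast(\hat{T})]^{W_G}$ and whose value at every such $\alpha$ is the inverse $L$-factor, and then to pull it back to $\mathcal{H}_{G(\mathbf{Q}_\ell)}^\circ[X]$ via $\mathcal{S}^{-1}$.

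The key input is that $L^{\mathrm{Nov}}(\Pi \times \pi, s)$ coincides with the Artin $L$-factor of the $8$-dimensional tensor representation $\rho := \mathrm{spin} \boxtimes \mathrm{std}$ of the dual group $\GSp_4(\mathbf{C}) \times \GL_2(\mathbf{C})$. To see this I would realise every irreducible unramified generic $\GSp_4$-principal series $\Pi$ as the theta-lift of a pair $(\tau_1, \tau_2)$ of unramified $\GL_2$-principal series on the split form of $\mathrm{GSO}_{2,2}$, so that $\alpha_\Pi$ is the image of $(\alpha_{\tau_1}, \alpha_{\tau_2})$ under $H \hookrightarrow \GSp_4$, and then invoke Soudry's identity
$$L^{\mathrm{Nov}}(\Pi \times \pi, s) \;=\; L(\tau_1 \times \pi, s)\, L(\tau_2 \times \pi, s).$$
Each right-hand factor is an unramified $\GL_2 \times \GL_2$ Rankin--Selberg $L$-factor, and their product equals $\det(1 - \ell^{-s} \rho(\alpha))^{-1}$ by a direct matrix computation. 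I would then set
$$\mathcal{P}(X, \alpha) := \det(1 - X\,\rho(\alpha)) = \sum_{i=0}^{8} (-1)^i \mathrm{tr}\bigl(\wedge^i \rho(\alpha)\bigr)\,X^i;$$
each coefficient is the character of an exterior power of $\rho$, hence a $W_G$-invariant regular function on $\hat{T}$, and pulling it back through $\mathcal{S}^{-1}$ coefficient-by-coefficient yields the desired element $\mathcal{P}^{\mathrm{Nov}}_\ell(X) \in \mathcal{H}_{G(\mathbf{Q}_\ell)}^\circ[X]$ of degree $8$.

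Uniqueness reduces to a Zariski-density argument: the set of Satake parameters arising from principal series satisfying the genericity conditions of \Cref{sec: siegel sections} is the complement of finitely many divisors in $\hat{T}/W_G$, hence Zariski-dense, so an element of $\mathcal{H}_{G(\mathbf{Q}_\ell)}^\circ[X]$ is determined by its image under all such $\Theta_{\Pi \times \pi}$. The step I expect to be the main obstacle is the $L$-factor identification in the second paragraph, since it is the only point requiring genuine representation-theoretic input beyond the formalism of the Satake transform; carrying it out directly from the definition of the Novodvorsky integral, without the theta-lift shortcut, would require a full unramified evaluation of $Z(\phi, W_\Pi, W_\pi; s)$ at the spherical vector, which is considerably more involved than the two standard $\GL_2 \times \GL_2$ Rankin--Selberg computations the theta-lift reduces it to.
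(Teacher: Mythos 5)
Your proposal matches the paper's proof essentially step for step: the Satake isomorphism, the identification of $L^{\mathrm{Nov}}(\Pi\times\pi,s)$ with the Artin $L$-factor of $\mathrm{spin}\boxtimes\mathrm{std}$ via the $\mathrm{GSO}_{2,2}$ theta-lift and Soudry's factorisation $L(\tau_1\times\pi,s)L(\tau_2\times\pi,s)$, and a density argument for uniqueness. The only minor difference is that you spell out explicitly that the coefficients of $\det(1-X\rho(\alpha))$ are traces of exterior powers of $\rho$, which the paper leaves implicit in the phrase that the reciprocal $L$-factor lies in the image of $\mathrm{ev}_\alpha$; this is a harmless expansion, not a different route.
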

\begin{rem}
   If $\Pi$ is the unramified principal-series $I(\chi_1,\chi_2;\chi)$, then by \cite{gan2010thetacorrespondencesgsp4}, it is the theta-lift of the product of the two $\GL_2(\mathbf{Q}_\ell)$-principal-series $I(\chi_1\chi_2\chi,\chi)\times I(\chi_1\chi,\chi_2\chi)$ and thus $L^\mathrm{nov}(\Pi\times\pi,s)$ can be written down explicitely as a product of two Rankin-Selberg $L$-factors $L(I(\chi_1\chi_2\chi,\chi)\times\pi,s)L(I(\chi_1\chi,\chi_2\chi)\times\pi,s).$ 
\end{rem}

\begin{prop}
    The $\mathcal{H}_{G(\mathbf{Q}_\ell)}^\circ$-module $\mathcal{I}(G(\mathbf{Q}_\ell)/G(\mathbf{Z}_\ell))$ is free of rank one, generated by $\ch(\mathbf{Z}_\ell^2)\otimes\ch(G(\mathbf{Z}_\ell)).$
    \begin{proof}
        By \Cref{thm: cyclicity of I} it suffices to show that the annihilator of the action of the Hecke algebra on the unramified generating vector, is zero. This can be done using \Cref{lem linear form}.
    \end{proof}
\end{prop}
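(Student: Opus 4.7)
The plan is to combine the cyclicity already established in \Cref{thm: cyclicity of I} with the uniqueness and non-vanishing of Novodvorsky's linear form from \Cref{lem linear form}. Since cyclicity is already in hand, freeness of rank one is equivalent to the annihilator of $v_0 := \ch(\mathbf{Z}_\ell^2) \otimes \ch(G(\mathbf{Z}_\ell))$ in $\mathcal{H}_{G(\mathbf{Q}_\ell)}^\circ$ being zero. So I would take an arbitrary $\theta \in \mathcal{H}_{G(\mathbf{Q}_\ell)}^\circ$ with $\theta \cdot v_0 = 0$ in $\mathcal{I}(G(\mathbf{Q}_\ell)/G(\mathbf{Z}_\ell))$ and aim to show $\theta = 0$.

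The bridge from the abstract coinvariant space to Hecke eigensystems is convolution against spherical vectors. For each irreducible unramified generic principal-series $\Pi \times \pi$ of $G(\mathbf{Q}_\ell)$ (parametrised as in \Cref{sec: siegel sections}), with spherical Whittaker vector $v^{\Pi \times \pi} := W_\Pi^{\mathrm{sph}} \otimes W_\pi^{\mathrm{sph}}$, the map $\xi \mapsto \int_{G(\mathbf{Q}_\ell)} \xi(g)\, g \cdot v^{\Pi \times \pi}\, dg$ is an $H(\mathbf{Q}_\ell)$-equivariant arrow $C_c^\infty(G(\mathbf{Q}_\ell)/G(\mathbf{Z}_\ell)) \to \Pi \times \pi$. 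Tensoring with the identity of $\mathcal{S}(\mathbf{Q}_\ell^2)$ and post-composing with the linear form $\mathcal{Z}$ of \Cref{lem linear form} yields an $H(\mathbf{Q}_\ell)$-invariant functional on $\mathcal{S}(\mathbf{Q}_\ell^2) \otimes C_c^\infty(G(\mathbf{Q}_\ell)/G(\mathbf{Z}_\ell))$, which descends to a well-defined linear functional $\mathcal{Z}^{\Pi \times \pi} : \mathcal{I}(G(\mathbf{Q}_\ell)/G(\mathbf{Z}_\ell)) \to \mathbf{C}$. By the normalization stated in \Cref{lem linear form}, together with the fact that convolving $\ch(G(\mathbf{Z}_\ell))$ against $v^{\Pi \times \pi}$ recovers $v^{\Pi \times \pi}$, one obtains $\mathcal{Z}^{\Pi \times \pi}(v_0) = 1$.

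A routine change of variables then shows that the right-convolution action on $C_c^\infty(G(\mathbf{Q}_\ell)/G(\mathbf{Z}_\ell))$ corresponds, after pairing with the spherical vector, to the usual Hecke action by $\theta'(g) := \theta(g^{-1})$ on $v^{\Pi \times \pi}$, so $\mathcal{Z}^{\Pi \times \pi}(\theta \cdot v_0) = \Theta_{\Pi \times \pi}(\theta')$, where $\Theta_{\Pi \times \pi}$ denotes the spherical eigensystem. Hence the hypothesis $\theta \cdot v_0 = 0$ forces $\Theta_{\Pi \times \pi}(\theta') = 0$ for every irreducible unramified generic principal-series $\Pi \times \pi$. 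I would finish by invoking the Satake isomorphism $\mathcal{H}_{G(\mathbf{Q}_\ell)}^\circ \otimes_\mathbf{Z} \mathbf{C} \simeq \mathbf{C}[X^*(\hat{T})]^{W_G}$: under it, $\theta'$ becomes a regular function on $(\hat{T}/W_G)(\mathbf{C})$, and the irreducibility/genericity constraints listed in \Cref{sec: siegel sections} cut out only a finite union of proper closed subsets of Satake parameters, so the parameters of the representations above range over a Zariski-dense subset. Vanishing on this dense set gives $\theta' = 0$, hence $\theta = 0$.

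The main obstacle I anticipate is the clerical task of tracking the left/right conventions and the inversion that appears when moving between right-convolution on $C_c^\infty(G(\mathbf{Q}_\ell)/G(\mathbf{Z}_\ell))$ and the standard smooth action on $\Pi \times \pi$; past that bookkeeping, the argument is mechanical, since the combinatorial heavy lifting has already been absorbed into Gejima's Cartan decomposition used in \Cref{thm: cyclicity of I}.
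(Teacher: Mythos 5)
Your argument is correct and is exactly the route the paper's (very terse) proof intends: use \Cref{thm: cyclicity of I} for cyclicity, then kill the annihilator by pairing against spherical vectors in unramified generic principal-series $\Pi\times\pi$, applying the normalized linear form $\mathcal{Z}$ of \Cref{lem linear form} to read off $\Theta_{\Pi\times\pi}(\theta')$, and concluding by Zariski density of the relevant Satake parameters. You have simply filled in the details the paper leaves implicit, including the correct bookkeeping that $\theta\cdot\ch(G(\mathbf{Z}_\ell))=\theta'$ under the right-translation convention.
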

\begin{defn}\label{def unique operator}
    For $\delta\in \mathcal{I}(G(\mathbf{Q}_\ell)/G(\mathbf{Z}_\ell))$, the Hecke operator $\mathcal{P}_\delta$ is the unique element of $\mathcal{H}_{G(\mathbf{Q}_\ell)}^\circ$ which satisfies $\mathcal{P}_\delta\cdot (\ch(\mathbf{Z}_\ell^2)\otimes\ch(G(\mathbf{Z}_\ell))=\delta$.
\end{defn}

\subsection{Piatetski-Shapiro's zeta-integral and Bessel models}
We still use $\psi$ to denote the standard additive character of $\mathbf{Q}_\ell$ of conductor $\mathbf{Z}_\ell$. Let $\Pi$ be an irreducible admissible representation of $\GSp_4(\mathbf{Q}_\ell)$ and let $\Lambda=(\lambda_1,\lambda_2)$ be a pair of smooth characters of $\mathbf{Q}_\ell^\times$ with $\lambda_1\lambda_2=\omega_\Pi$. A (split) $\Lambda$-Bessel model of $\Pi$ is a $\GSp_4(\mathbf{Q}_\ell)$-invariant subspace isomorphic to $\Pi$, inside the space of functions $B:\GSp_4(\mathbf{Q}_\ell)\rightarrow\mathbf{C}$ satisfying
$$B(\left[\begin{smallmatrix}
    1 & & u& v\\
    & 1 & w & u\\
    & & 1  & & \\
    & & & 1
\end{smallmatrix}\right]\left[\begin{smallmatrix}
    x & & & \\
    & y & & \\
    & & x & \\
    & & & y
\end{smallmatrix}\right]g)=\psi(u)\lambda_1(x)\lambda_2(y)B(g)$$
for all $g\in \GSp_4(\mathbf{Q}_\ell).$ If such a model exists then by \cite[Theorem $6.3.2$(i)]{roberts2016some} it is unique and we denote it by $\mathcal{B}_\Lambda(\Pi)$; the $\Lambda$-Bessel model of $\Pi$. 
\begin{defn}\emph{(\cite{piatetski1997functions})}
    Suppose that $\Pi$ admits a $\Lambda$-Bessel model and let $\nu$ be a smooth character of $\mathbf{Q}_\ell^\times$. For $B\in\mathcal{B}_\Lambda(\Pi)$ and $\phi_1,\phi_2\in\mathcal{S}(\mathbf{Q}_\ell^2)$ we set 
    $$Z(B,\phi_1,\phi_2;\Lambda,\nu,s):=\int_{N_H(\mathbf{Q}_\ell) \backslash H(\mathbf{Q}_\ell)} B(h)\phi_1((0,1)h_1)\phi_2((0,1)h_2)\nu(\det(h))|\det(h)|^{s+\tfrac{1}{2}}\ dh$$
    where $N_H$ denotes the unipotent radical of $H$, consisting of upper unitriangular matrices.
\end{defn}
We know from \cite{piatetski1997functions} that the above zeta-integral converges fro $\Re(s)$ large enough and admits unique meromorphic continuation as a rational function of $\ell^s$. 
\begin{thm}\emph{\cite{rosner2025regular}} The complex vector space 
$$\left\{Z(B,\phi_2,\phi_2;\Lambda,\nu,s)\ |\ B\in\mathcal{B}_\Lambda(\Pi),\ \phi_i\in \mathcal{S}(\mathbf{Q}_\ell^2)\right\}$$
is a fractional ideal of $\mathbf{C}[\ell^s,\ell^{-s}]$ containing $\mathbf{C}$. Its is generated by the degree four spin $L$-factor $L(\Pi,s)$ associated to the Langlands parameter of $\Pi$.
\end{thm}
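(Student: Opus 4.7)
The plan is to establish three things in turn: that the span of integrals $Z(B, \phi_1, \phi_2; \Lambda, \nu, s)$ is a fractional ideal of $\mathbf{C}[\ell^s, \ell^{-s}]$; that it contains the constant functions; and that its generator coincides with the reciprocal of the degree-four spin $L$-factor $L(\Pi, s)$.

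First I would use the Iwasawa decomposition $H(\mathbf{Q}_\ell) = N_H(\mathbf{Q}_\ell) \cdot T_H(\mathbf{Q}_\ell) \cdot H(\mathbf{Z}_\ell)$, where $T_H \simeq \GL_1^2$ is the split diagonal torus, to rewrite $Z(B, \phi_1, \phi_2; \Lambda, \nu, s)$ as a finite sum of Mellin transforms on $T_H$. The transformation law of Bessel vectors under the Bessel subgroup controls the integrand at one end of the torus, while compact support of $\phi_1, \phi_2$ controls the behaviour at the other. Bernstein's analytic continuation principle, applied to a suitable finite-codimensional subspace of test data, then gives meromorphic continuation as a rational function of $\ell^s$ with a common denominator of bounded degree, yielding the fractional ideal structure.

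Next, to show $\mathbf{C}$ is contained in the span, I would use an approximate-identity construction: take $B$ to be a bump supported near the identity of $\GSp_4(\mathbf{Q}_\ell)$ (which exists since the Bessel model is admissible and nonzero) and $\phi_1, \phi_2$ suitably localized, so that the resulting integral evaluates to a nonzero complex constant independent of $s$. Combined with the fractional ideal property, this forces the generator to be a polynomial in $\ell^{\pm s}$.

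Identifying this generator with $L(\Pi, s)^{-1}$ is the hard step. For unramified $\Pi$, the computation is classical: substitute Sugano's explicit formula for the spherical Bessel function, take $\phi_i = \ch(\mathbf{Z}_\ell^2)$, and collapse the Mellin transform to a geometric series over the Weyl group of $\GSp_4$ weighted by the Satake parameters; the result matches the characteristic polynomial of Frobenius on the four-dimensional spin representation. For general $\Pi$, one proceeds by reduction to supercuspidal support: use the behaviour of Bessel models under parabolic induction (Roberts--Schmidt) together with the inductivity of the spin $L$-factor to reduce to the supercuspidal case, which is then handled via the classification of Bessel models on ramified representations (R\"osner--Weissauer), or by comparison with an alternative integral representation (such as the doubling integral of Piatetski-Shapiro--Rallis) where matching with the Langlands parameter is already known.

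The main obstacle will be this last identification in the presence of ramification. Unlike the Whittaker model, the Bessel period does not see the local Langlands parameter transparently: supercuspidal representations may admit several $\Lambda$-Bessel models, or none, depending on the choice of $\Lambda$, and the $L$-factor attached in each case must be controlled individually. The content of the cited theorem is precisely this uniform identification across all cases.
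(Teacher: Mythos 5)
This theorem is not proved in the paper: it is imported without proof from \cite{rosner2025regular} (the theorem environment has no accompanying proof block, and the text moves immediately to the next subsubsection). So there is nothing in the paper to compare your argument against; I can only assess the sketch on its own terms.

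You have a persistent direction error. You state that the generator of the fractional ideal ``coincides with the reciprocal of the degree-four spin $L$-factor $L(\Pi,s)$,'' and later speak of ``identifying this generator with $L(\Pi,s)^{-1}$.'' The fractional ideal is generated by $L(\Pi,s)$ itself, which is the reciprocal of a polynomial in $\ell^{-s}$. If the generator were instead $L(\Pi,s)^{-1}=P(\ell^{-s})$, the fractional ideal would be the honest ideal $P\cdot\mathbf{C}[\ell^{\pm s}]$, which contains $\mathbf{C}$ only when $P$ is a unit --- contradicting the containment of $\mathbf{C}$ that you yourself establish in your second step. The adjacent inference ``this forces the generator to be a polynomial in $\ell^{\pm s}$'' is backwards for the same reason: containment of $\mathbf{C}$ forces the generator to be the \emph{reciprocal} of a Laurent polynomial, normalizable to $1/P(\ell^{-s})$ with $P(0)=1$.

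With the direction repaired, your three-step skeleton (fractional ideal structure via Iwasawa decomposition and Bernstein continuation; constants via localized test data; identification of the generator) is the standard architecture, and the unramified computation via Sugano's explicit formula for the spherical Bessel function is indeed classical. But the identification for ramified $\Pi$ --- the heart of the matter --- is left as a list of candidate strategies rather than an argument, and your own closing paragraph concedes that this uniform identification is precisely the content of \cite{rosner2025regular}. What you have is a plausible roadmap for why such a theorem should be true, not a proof of it.
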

\subsubsection{Generic representations admit split Bessel models} Let $\Pi$ be an irreducible admissible generic representation of $\GSp_4(\mathbf{Q}_\ell)$. For $\mu$ a smooth character of $\mathbf{Q}_\ell^\times$ and  $W\in\mathcal
W(\Pi)$, Novodvosrky \cite{novodvorsky1979automorphic} defines the integral
$$B(W;\mu,s):=\int_{\mathbf{Q}_\ell^\times}\int_{\mathbf{Q}_\ell}W(\left[\begin{smallmatrix}
    a & & & \\
    & a & & \\
    & x & 1 & \\
    & & & 1
\end{smallmatrix}\right]\left[\begin{smallmatrix}
    1 & & & \\
    & & 1 & \\
    & -1 & & \\
    & & & 1
\end{smallmatrix}\right])|a|^{s-\tfrac{3}{2}}\mu(a)\ dx\ d^\times a.$$
He then shows that it converges for $\Re(s)$ large enough and admits unique meromorphic continuation as a rational function of $\ell^s$. Moreover by \cite{takloo2000functions} we know that the complex vector space generated by the $B(W;\mu,s)$ as $W\in\mathcal{W}(\Pi)$ varies, is a fractional ideal of $\mathbf{C}[\ell^s,\ell^{-s}]$ generated by the spin $L$-factor $L(\Pi\times\mu,s)$. Thus, for any $W\in\mathcal{W}(\Pi)$, we can consider the entire function
    \begin{align}
        \tilde{B}_W(g;\mu,s):=\frac{B(gW;\mu,s)}{L(\Pi\times\mu,s)},\ \ g\in\GSp_4(\mathbf{Q}_\ell)
    \end{align}
In fact for any $s$, the space of functions $\{\tilde{B}_W(-;\mu,s)\ |\ W\in\mathcal{W}(\Pi)\}$ is the $\Lambda$-Bessel model for $\Pi$, where $\Lambda:=(\mu^{-1}|\cdot|^{\tfrac{1}{2}-s},\mu\omega_\Pi|\cdot|^{s-\tfrac{1}{2}})$. This follows from \cite[\S $3.4$]{roberts2016some}.

\subsection{Relating Novdvorsky's and Piatetski-Shapiro's zeta integrals}
    Let $\Pi$ be irreducible admissible generic representations of $\GSp_4(\mathbf{Q}_\ell)$ and let $W_1\in\mathcal{W}(\Pi,\psi)$ and $\phi_1\in\mathcal{S}(\mathbf{Q}_\ell^2)$. Let $\pi=I(\nu,\mu)$ and recall the Whittaker function $W^{\phi_2}(\nu,\mu)\in \mathcal{W}(\pi,\psi^{-1})$ associated to any $\phi_2\in\mathcal{S}(\mathbf{Q}_\ell^2)$.  The ``basic formula'' in \cite[Proposition $6.1$]{loeffler2023localzetaintegralsgsp4gsp4} reads 
    \begin{align}\label{eq: basic formula}
        Z(\phi_1,W_1,W^{\phi_2}(\nu,\mu);s)=L(\Pi\times\mu)Z(\tilde{B}_{W_1},\phi_1,\phi_2;\Lambda,\nu,s)
    \end{align}
    where $\Lambda:=(\mu^{-1}|\cdot|^{\tfrac{1}{2}-s},\omega_\Pi\mu|\cdot|^{s-\tfrac{1}{2}})$ and $\tilde{B}_{W_1}=\tilde{B}_{W_1}(-;\mu,s)\in\mathcal
    B_\Lambda(\Pi)$. This was already used in earlier work of Loeffler-Pilloni-Skinner-Zerbes \cite[\S $8$]{loeffler2021higher}.

\section{Motivic classes for $G$}
\subsection{Input data}\label{sec input for G}
\begin{itemize}
    \item Fix an integer $M\geq 1$.
    \item Let $R\in\mathbf{Z}_{\geq 1}$ be an integer coprime to $M$.
    \item For each $\ell|R$ let  $K_\ell:=K_\ell^{\GSp_4}\times K_\ell^{\GL_2}\subseteq G(\mathbf{Z}_\ell)$ be an open compact level subgroup. For $\ell\nmid R$, we set $K_\ell:= G(\mathbf{Z}_\ell)$. 
    
    \item We set $\phi_M(R):=\otimes_{\ell}\phi_\ell\in\mathcal{S}(\mathbf{A}_\mathrm{f}^2,\mathbf{Z})$ where 
    $$\phi_\ell:=\begin{dcases}
        \ch(\mathbf{Z}_\ell^2),\ &\mathrm{if}\ \ell\nmid MR\\
\ch(\ell\mathbf{Z}_\ell\times(1+\ell\mathbf{Z}_\ell)),\ &\mathrm{if}\ \ell\ |\ M\\
        \phi_\ell\in\mathcal{S}(\mathbf{Q}_\ell^2,\mathbf{Z})\ \mathrm{arbitrary},\ &\mathrm{if}\ \ell|R.
    \end{dcases}$$
    \item Similarly, we set $\xi_M(R):=\otimes_{\ell}\xi_\ell\in C_c^\infty(G(\mathbf{A}_\mathrm{f}))$ where 
    $$\xi_\ell:=\begin{dcases}
    \ch(G(\mathbf{Z}_\ell)),\ &\mathrm{if}\ \ell\nmid MR\\
        (\ell^2-1)\xi_0^{\mathcal{K}_{1,0}}(\ell),\ &\mathrm{if}\ \ell|M\\
        C_\ell \cdot\xi_\ell\in C_c^\infty(G(\mathbf{Q}_\ell)/K_\ell,\mathbf{Z})\ \mathrm{arbitrary},\ &\mathrm{if}\ \ell|R
    \end{dcases}$$
    where $C_\ell\in\mathbf{Z}$ is such that $C_\ell \phi_\ell\otimes\xi_\ell$ is integral of level $K_\ell$ for all $\ell|R$ in the sense of \cite[\S $3$]{Loeffler_2021}. Moreover,  $\xi_0^{\mathcal{K}_{0,1}}(\ell)$ is as in \cite[\S $8.1$] {hsu2020eulersystemsmathrmgsp4times} but with $K_{1,0}$ in \textit{loc.cit}, replaced by $\mathcal{K}_{1,0}:=K_{1,0}^{\GSp_4}\times \GL_2(\mathbf{Z}_\ell)$ where $K_{1,0}^{\GSp_4}:=\{g\in \GSp_4(\mathbf{Z}_\ell)\ |\ \mu(g)\equiv 1\mod \ell\}$. This minor change in the setup arises from the fact that the classes constructed in \textit{op.cit} live in the cohomology of the Shimura variety of the smaller group $\GSp_4\times_{\GL_1}\GL_2$. Here we naturally view them as classes in the cohomology of the Shimura variety of $G$. The tame norm-relations and their local counterpart (\cite[Theorem $7.31$]{hsu2020eulersystemsmathrmgsp4times}) carry over, as the identity maps induce isomorphisms of quotient groups 
    \begin{align*}
        (\GSp_4\times_{\GL_1} \GL_2)(\mathbf{Z}_\ell)/K_{1,0}&\simeq G(\mathbf{Z}_\ell)/\mathcal{K}_{1,0} 
    \end{align*}and the Hecke actions are compatible.
    \item In the same spirit, we define the open compact subgroup $K_M(R):=\prod_\ell K_\ell\subseteq G(\mathbf{A}_\mathrm{f})$ where 
    $$K_\ell:=\begin{dcases}
G(\mathbf{Z}_\ell)=\GSp_4(\mathbf{Z}_\ell)\times \GL_2(\mathbf{Z}_\ell),\ &\mathrm{if}\ \ell\nmid MR\\
\mathcal{K}_{1,0}=K_{1,0}^{\GSp_4}\times \GL_2(\mathbf{Z}_\ell),\ &\mathrm{if}\ \ell\ |\ M\\
        K_\ell,\ &\mathrm{if}\ \ell\ |\ R.\\
    \end{dcases}$$
    The subgroup $K_M(R)$ can naturally be written as $K^{
    \GSp_4}_M(R)\times K^{\GL_2}_M(R)$ in the obvious way. Finally, we set $U(R):=\left\{\prod_{\ell|R}K_\ell\right\}\times\left\{\prod_{\ell\nmid R} G(\mathbf{Z}_\ell)\right\}\supseteq K_M(R)$ and the same direct product decomposition $U(R)=U^{\GSp_4}(R)\times U^{\GL_2}(R)$ holds.\\
\end{itemize}

The elements $\phi_M(R)\otimes\xi_M(R)$ are contained in $\mathcal{S}(\mathbf{A}_\mathrm{f}^2,\mathbf{Z})\otimes C_c^\infty(G(\mathbf{A}_\mathrm{f})/K_M(R),\mathbf{Z})$, and they are integral of level $K_M(R)$ in the sense of \cite[\S $3$]{Loeffler_2021}. Let $a,b,d,r\in\mathbf{Z}_{\geq 0}$ satisfying $d\leq a+b$ and $\max\{0,-a+d\}\leq r\leq \min\{b,d\}$. As in \cite{hsu2020eulersystemsmathrmgsp4times}, but in a slightly different notation, there exists a symbol map
$$\mathrm{Symbl}^{[a,b,d,r]}:\mathcal{S}_{(0)}(\mathbf{A}_\mathrm{f}^2,\mathbf{Q})\otimes _\mathbf{Q}C_c^\infty(G(\mathbf{A}_\mathrm{f}),\mathbf{Q})\longrightarrow H_\mathrm{mot}^5(Y_G,\mathscr{W}_\mathbf{Q}^{a,b,d,*}(3-a-r))[-a-r]$$
which is $(H(\mathbf{A}_\mathrm{f})\times G(\mathbf{A}_\mathrm{f}))$-equivariant in the sense of \cite{Loeffler_2021}. The notation $\mathcal{S}_{(0)}$ means that we are considering the full space of Schwartz functions, unless $a=r=0$, in which case we consider the subspace of functions that vanish at the origin. Here $Y_G=Y_{\GSp_4}\times Y_{\GL_2}$ denotes the infinite level Shimura variety of $G$ and $\mathscr{W}_\mathbf{Q}^{a,b,d}$ is the $G(\mathbf{A}_\mathrm{f})$-equivariant relative Chow motive over $Y_G$ appearing in \cite[\S $4.2$]{hsu2020eulersystemsmathrmgsp4times}. The square brackets denote a twist of the $G(\mathbf{A}_\mathrm{f})$-action by $|\mu(-)|_\mathbf{A}^{-a-r}$, where the multiplier map $\mu$ is regarded as a character of $G$ through the $\GSp_4$-factor.
\begin{defn}[\cite{hsu2020eulersystemsmathrmgsp4times}]
    For integers $R,M$ as before, we define the class
    $$z_{\mathrm{mot},M}(R):=\mathrm{Symbl}^{[a,b,d,r]}\left(\phi_{M}(R)\otimes\xi_{M}(R)\right)\in H_\mathrm{mot}^5(Y_G(K_M(R)),\mathscr{W}_\mathbf{Q}^{a,b,d,*}(3-a-r))[-a-r].$$
\end{defn}
\begin{defn}
    Let $c\in\mathbf{Z}_{>1}$ be an integer coprime to $6$ and write $\varpi_c:=\otimes_{\ell|c}\varpi_\ell$ where $\varpi_\ell$ is a uniformizer of $\mathbf{Q}_\ell$. For integers $R,M$ as before satisfying the extra condition $\mathrm{gcd}(c,R)=\mathrm{gcd}(c,M)=1$, we define the integral variant
    $$_cz_{\mathrm{mot},M}(R):=\left\{c^2-(\left[\begin{smallmatrix}
        \varpi_c & & & \\
        & \varpi_c & & \\
        & & \varpi_c & \\
        & & & \varpi_c
    \end{smallmatrix}\right],\left[\begin{smallmatrix}
        \varpi_c & \\
        & \varpi_c
    \end{smallmatrix}\right])^{-1}\right\}\cdot z_{\mathrm{mot},M}(R)$$
    in $H_\mathrm{mot}^5(Y_G(K_M(R)),\mathscr{W}_\mathbf{Q}^{a,b,d,*}(3-a-r))[-a-r]$.
\end{defn}

Let $p$ be a prime. Then for every $c,M,R$ as above with $(M,p)=(R,p)=1$, the classes $_cz_{\mathrm{mot},M}(R)$ under the $p$-adic \'etale regulator map, lie in the corresponding $p$-adic \'etale cohomology with integral coefficients. This follows by construction and part of \cite[Theorem $8.7$]{hsu2020eulersystemsmathrmgsp4times} (see also \cite[Proposition $9.5.2$]{Loeffler_2021}).

We have an isomorphism $Y_G(K_M(R))\simeq Y_G(U(R))\times_\mathbf{Q} \mathbf{Q}(\mu_{M})$ induced by the inclusion $K_M(R)\subseteq U(R).$ Pushing forward the $z$-classes under this isomorphism, we obtain the following two variants, defined over cyclotomic fields:
\begin{align*}
    \Xi_{\mathrm{mot},M}(R)&\in H^5_\mathrm{mot}(Y_{G}(U(R))\times_\mathbf{Q}\mathbf{Q}(\mu_{M}),\mathscr{W}_\mathbf{Q}^{a,b,d,*}(3-a-r))[-a-r]\\
    _c\Xi_{\mathrm{mot},M}(R)&\in H^5_\mathrm{mot}(Y_{G}(U(R))\times_\mathbf{Q}\mathbf{Q}(\mu_{M}),\mathscr{W}_\mathbf{Q}^{a,b,d,*}(3-a-r))[-a-r]
\end{align*}
where the classes $_c\Xi_{\mathrm{mot},M}(R)$ satisfy the same $p$-adic integrality property in \'etale cohomology.

\subsubsection{Motivic $\GSp_4\times \GL_2$ tame norm-relations} We now showcase that the tame norm-relations of \cite{hsu2020eulersystemsmathrmgsp4times} in Galois cohomology, can be strengthened to hold in motivic cohomology. Once, again let $c\in\mathbf{Z}_{>1}$ coprime to $6$. Let $R$ be an integer as above and let $\mathscr{S}$ be the set of square-free positive integers coprime to $cR$
\begin{lem}\label{lem intertwining action}
    Let $\ell,M\in\mathscr{S}$ with $\ell$ prime and $\ell\nmid M$. The map $Y_G(K_M(R))\simeq Y_G(U(R))\times_\mathbf{Q} \mathbf{Q}(\mu_{M})$ intertwines the action of $\mathcal{P}_\ell^{\mathrm{Nov}'}(1)$ on the source with $\mathcal{P}_\ell^\mathrm{Nov'}(\mathrm{Frob}_\ell^{-1})$ on the target, where $\mathrm{Frob}_\ell$ denotes arithmetic Frobenius as an element of $\mathrm{Gal}(\mathbf{Q}(\mu_{M})/\mathbf{Q}).$ 
    \begin{proof}
        By construction, given $t=(t_1,t_2)\in T(\mathbf{Q}_\ell)=T^\mathrm{GSp_4}(\mathbf{Q}_\ell)\times T^{\GL_2}(\mathbf{Q}_\ell)$, the map $Y_G(K_M(R))\simeq Y_G(U(R))\times_\mathbf{Q} \mathbf{Q}(\mu_{M})$ intertwines the action of $\ch(G(\mathbf{Z}_\ell) tG(\mathbf{Z}_\ell))$ on the source, (regarded as an element of the abstract Hecke algebra at $\ell$) with $\ch(G(\mathbf{Z}_\ell) tG(\mathbf{Z}_\ell))\times \mathrm{Frob}_\ell^{v_\ell(\mu(t_1))}$ where recall that $\mu$ is the multiplier map. Thus it suffices to show that for every monomial of the form $\{\prod_j\ch(G(\mathbf{Z}_\ell) t^{(i,j)}G(\mathbf{Z}_\ell))\}X^i$ (with $t^{(i,j)}\in T(\mathbf{Q}_\ell)$ $i\in \{0,\dots, 8\}$) appearing in $\mathcal{P}_\ell^\mathrm{Nov'}(X)$, we have $-i=\sum_jv_\ell(\mu(t_1^{(i,j)})).$ We claim that this is indeed true.
        
        Let $\Pi$ be an irreducible unramified principal-series of $\GSp_4(\mathbf{Q}_\ell)$, and let $\pi=I(|\cdot|^a,|\cdot|^b)$ be an irreducible unramified principal-series of $\GL_2(\mathbf{Q}_\ell)$. If we let $\Pi_1:=\Pi\otimes\omega_\Pi^{-1/2}$ and $\pi_1:=\pi\otimes\omega_\Pi^{1/2}$, then $\Pi_1$ has trivial central character and $L^\mathrm{Nov}(\Pi\times\pi,s)=L^\mathrm{Nov}(\Pi_1\times\pi_1,s)$. We have $L^\mathrm{Nov}(\Pi\times\pi,s)=L(\Pi_1,s+\tilde{a})L(\Pi_1,s+\tilde{b})$ where $\tilde{a}:=a+\tfrac{c}{2}$ and $\tilde{b}=b+\tfrac{c}{2}$. Let
        \begin{align}\label{eq: Hecke ops}\scalemath{0.9}{\mathscr{T}_\ell:=\ch(\GSp_4(\mathbf{Z}_\ell)\left[\begin{smallmatrix}
            \ell & & & \\
            & \ell & & \\
            & & 1 & \\
            & & & 1
\end{smallmatrix}\right]\GSp_4(\mathbf{Z}_\ell)),\ \mathscr{R}_\ell:=\ch(\GSp_4(\mathbf{Z}_\ell)\left[\begin{smallmatrix}
            \ell^2 & & & \\
            & \ell & & \\
            & & \ell & \\
            & & & 1
\end{smallmatrix}\right]\GSp_4(\mathbf{Z}_\ell)),\ \mathscr{S}_\ell:=\ch(\left[\begin{smallmatrix}
            \ell & & & \\
            & \ell & & \\
            & & \ell & \\
            & & & \ell 
\end{smallmatrix}\right]\GSp_4(\mathbf{Z}_\ell))}.\end{align}
        Then, by \cite[Theorem $7.5.3$]{roberts2007local}, we have 
        $$L^\mathrm{Nov}(\Pi\times\pi,s)^{-1}=\prod_{i\in\{\tilde{a},\tilde{b}\}}\left(1- \ell^{-3/2}\lambda_{\Pi_1}\ell^{-s-i}+(\ell^{-2}\mu_{\Pi_1}+1+\ell^{-2})\ell^{-2s-2i}-\ell^{-3/2}\lambda_{\Pi_1}\ell^{-3s-3i}+\ell^{-4s-4i}\right)$$
        where $\lambda_{\Pi_1}$ is the Hecke eigenvalue of $\mathscr{T}_\ell$ acting on $W_{\Pi_1}^\mathrm{sph}$ and similarly $\mu_{\Pi_1}$ is the eigenvalue of $\mathscr{R}_\ell$. It follows by construction that $\lambda_\Pi=\ell^{-\tfrac{c}{2}}\lambda_{\Pi_1}$ and $\mu_\Pi=\ell^{-c}\mu_{\Pi_1}$. Thus, the above product becomes 
        \begin{align}\label{eq: 3}
            \prod_{i\in\{a,b\}}\left(1- \ell^{-3/2}\lambda_{\Pi}\ell^{-s-i}+(\ell^{-2}\mu_{\Pi}+(1+\ell^{-2})\omega_\Pi(\ell))\ell^{-2s-2i}-\ell^{-3/2}\omega_\Pi(\ell)\lambda_{\Pi}\ell^{-3s-3i}+\omega_\Pi(\ell)^2\ell^{-4s-4i}\right)
        \end{align}
        where ofcourse, $\omega_\Pi(\ell)$ is also the $\mathcal{S}_\ell$-eigenvalue of $W_\Pi^\mathrm{sph}$ and $\ell^{-i}$ for $i\in\{a,b\}$ are the Satake parameters of $\pi$. This expression uniquely determines the polynomial $\mathcal{P}_\ell^\mathrm{Nov}(X)$, and hence also $\mathcal{P}_\ell^{\mathrm{Nov}'}(X)$. From here, one can read off the coefficients of each of the $X^i$ and numerically verify the claim, using the multiplier value assigned to each of the matrices defining $\mathscr{T}_\ell,\mathscr{R}_\ell,\mathscr{S}$, which is $\ell,\ell^2,\ell^2$ respectively. For example, under the natural identification $\mathcal{H}_{G(\mathbf{Q}_\ell)}^\circ\simeq\mathcal{H}_{\GSp_4(\mathbf{Q}_\ell)}^\circ\otimes\mathcal{H}_{\GL_2(\mathbf{Q}_\ell)}^\circ,$ the $\GSp_4$-part of the coefficient of $X^5$ in $\mathcal{P}_\ell^\mathrm{Nov}(X)'$ is $\mathscr{S}_\ell'\mathscr{T}^{'}_\ell\mathscr{R}^{'}_\ell+{\mathscr{S}_\ell'}^2\mathscr{T}^{'}_\ell$ as per the claim. All other verifications are handled in the same way.
    \end{proof}
\end{lem}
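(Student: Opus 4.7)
The plan is to reduce the intertwining statement to a purely combinatorial check on the coefficients of $\mathcal{P}_\ell^{\mathrm{Nov}'}(X)$, and then verify that check by an explicit computation of the polynomial.

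The first step is the structural observation that, for any $t=(t_1,t_2)\in T(\mathbf{Q}_\ell)$, the isomorphism $Y_G(K_M(R))\simeq Y_G(U(R))\times_\mathbf{Q}\mathbf{Q}(\mu_M)$ intertwines the abstract Hecke operator $\ch(G(\mathbf{Z}_\ell)tG(\mathbf{Z}_\ell))$ on the source with $\ch(G(\mathbf{Z}_\ell)tG(\mathbf{Z}_\ell))\times\mathrm{Frob}_\ell^{v_\ell(\mu(t_1))}$ on the target. This is standard: the Hecke action at $\ell$ permutes the geometric components of $Y_G(K_M(R))$ according to the $\GSp_4$-multiplier via the determinant map at level $K_M(R)/U(R)$, and the cover is the cyclotomic one. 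Assuming this, the problem reduces to showing that for every monomial $\{\prod_j\ch(G(\mathbf{Z}_\ell)t^{(i,j)}G(\mathbf{Z}_\ell))\}X^i$ appearing in $\mathcal{P}_\ell^{\mathrm{Nov}'}(X)$, one has $-i=\sum_j v_\ell(\mu(t_1^{(i,j)}))$.

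Next, I would compute $\mathcal{P}_\ell^{\mathrm{Nov}}(X)$ essentially explicitly. By \Cref{lem Nov-poly}, combined with the Satake isomorphism, it suffices to write $L^{\mathrm{Nov}}(\Pi\times\pi,s)^{-1}$ as a polynomial in $\ell^{-s}$ whose coefficients are algebraic expressions in the spherical Hecke eigenvalues of $\Pi$ and $\pi$. Using the theta-correspondence description of $L^{\mathrm{Nov}}$ as a product of two Rankin-Selberg $L$-factors (as in the remark following \Cref{lem Nov-poly}) together with the standard expansion of the degree-four spin $L$-factor of an unramified $\GSp_4$-principal-series (e.g.\ Roberts-Schmidt), each Rankin-Selberg factor is a degree four polynomial in $\ell^{-s-i}$ whose coefficients are polynomial in the Hecke operators $\mathscr{T}_\ell,\mathscr{R}_\ell,\mathscr{S}_\ell$ of \eqref{eq: Hecke ops}. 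After renormalising from $\Pi_1:=\Pi\otimes\omega_\Pi^{-1/2}$ (which has trivial central character) back to $\Pi$, and combining the two factors corresponding to the Satake parameters $\ell^{-a},\ell^{-b}$ of $\pi$, one obtains a degree eight polynomial in $X=\ell^{-s}$ whose coefficients one can read off.

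The final step is the bookkeeping check. The matrices defining $\mathscr{T}_\ell,\mathscr{R}_\ell,\mathscr{S}_\ell$ have multipliers $\ell,\ell^2,\ell^2$ respectively, so given any monomial appearing after expansion one simply sums the valuations contributed by each Hecke operator and compares with the power of $X$. I expect this last step to be the main obstacle, not conceptually but combinatorially: the polynomial has many terms and each must be checked. However, each elementary factor $(1-\ell^{-3/2}\lambda_\Pi\ell^{-s-i}+\cdots)$ is homogeneous in a compatible sense, so the check essentially reduces to verifying the identity on one representative term in each degree (for instance the $\GSp_4$-part of the coefficient of $X^5$ is $\mathscr{S}_\ell'\mathscr{T}_\ell'\mathscr{R}_\ell'+{\mathscr{S}_\ell'}^2\mathscr{T}_\ell'$, both summands giving multiplier-valuation $5$). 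The remaining coefficients follow by the same pattern, concluding the lemma.
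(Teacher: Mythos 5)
Your proposal follows essentially the same approach as the paper's own proof: first observe that the isomorphism $Y_G(K_M(R))\simeq Y_G(U(R))\times_\mathbf{Q}\mathbf{Q}(\mu_M)$ intertwines each abstract Hecke operator $\ch(G(\mathbf{Z}_\ell)tG(\mathbf{Z}_\ell))$ with the same operator times $\mathrm{Frob}_\ell^{v_\ell(\mu(t_1))}$, reducing the lemma to the combinatorial claim $-i=\sum_j v_\ell(\mu(t_1^{(i,j)}))$; then compute $\mathcal{P}_\ell^{\mathrm{Nov}}(X)$ explicitly via the factorisation of $L^{\mathrm{Nov}}$ into shifted spin $L$-factors (Roberts--Schmidt) after renormalising from $\Pi_1$ back to $\Pi$, and verify the claim coefficient by coefficient using the multiplier values $\ell,\ell^2,\ell^2$ of $\mathscr{T}_\ell,\mathscr{R}_\ell,\mathscr{S}_\ell$. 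The illustrative check on the $X^5$ coefficient ($\mathscr{S}_\ell'\mathscr{T}_\ell'\mathscr{R}_\ell'+{\mathscr{S}_\ell'}^2\mathscr{T}_\ell'$) is the same one the paper gives.
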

\begin{thm}[Motivic tame norm-relations]\label{thm motivic tame norm-relations} For all $M,M'\in\mathscr{S}$ with $\tfrac{M'}{M}=\ell$ prime, we have 
\begin{align*}
    \mathrm{norm}^{\mathbf{Q}(\mu_{M'})}_{\mathbf{Q}(\mu_{M})}(_c\Xi_{\mathrm{mot},M'}(R))=\mathcal{P}_\ell^{\mathrm{Nov}'}(\mathrm{Frob}_\ell^{-1})\cdot\  _c\Xi_{\mathrm{mot},M}(R)
\end{align*}
where $\mathrm{Frob}_\ell$ denotes arithmetic Frobenius at $\ell$ as an element of $\mathrm{Gal}(\mathbf{Q}(\mu_{M})/\mathbf{Q}).$
\begin{proof}
    By construction, the input data between the $M'$-class and the $M$-class only differ at $\ell$. The local input data at $\ell$ is $\delta_1:=\phi_{1,1}\otimes(\ell^2-1)\xi_0^{\mathcal{K}_{0,1}}(\ell)$ and $\phi_{0,0}\otimes\ch(G(\mathbf{Z}_\ell))$ respectively, where $\phi_{t,t}:=\ch(\ell^t\mathbf{Z}_\ell\times(1+\ell^t\mathbf{Z}_\ell)).$ Using \Cref{thm: cyclicity of I}, \Cref{lem intertwining action} and the same reduction argument as the proof of \cite[Theorem $9.4.3$]{Loeffler_2021}, it suffices to show that $\mathcal{P}_{\mathrm{Tr}(\delta_1)}'=\mathcal{P}_\ell^\mathrm{Nov}(1)$ where $\mathcal{P}_{\delta_1}$ is as in \Cref{def unique operator} and $\mathrm{Tr}$ denotes the trace map from $\mathcal{K}_{0,1}$ to $G(\mathbf{Z}_\ell).$ Let $\sigma:=\Pi\times\pi$ be as before. The map $\mathfrak{Z}:\mathcal{S}(\mathbf{Q}_\ell^2)\otimes C_c^\infty(G(\mathbf{Q}_\ell))\rightarrow \sigma^\vee$ given by $\phi\otimes\xi\mapsto \left\{W_\Pi\otimes W_\pi \mapsto \mathcal{Z}(\phi\otimes \xi\cdot (W_\Pi\otimes W_\pi))\right\}$ is $H(\mathbf{Q}_\ell)$-equivariant and moreover we can apply \cite[Theorem $7.31$]{hsu2020eulersystemsmathrmgsp4times} to this map. Setting $\xi_1:=\mathrm{Tr}(\xi_0^{\mathcal{K}_{0,1}}(\ell))\in C_c^\infty(G(\mathbf{Q}_\ell)/G(\mathbf{Z}_\ell))$ and evaluating at the spherical vector, \textit{loc.cit} gives 
    \begin{align*}
        \mathcal{Z}\left(\phi_{1,1}\otimes(\ell^2-1)(\xi_1\cdot (W_\Pi^\mathrm{sph}\otimes W_\pi^\mathrm{sph}))\right)&=L^\mathrm{Nov}(\Pi\times\pi,0)^{-1} \mathcal{Z}(\phi_{0,0}\otimes W_\Pi^\mathrm{sph}\otimes W_\pi^\mathrm{sph})\\
        &=L^\mathrm{Nov}(\Pi\times\pi,0)^{-1}\\
        &=\Theta_{\Pi\times\pi}(\mathcal P^\mathrm{Nov}_\ell(1))
    \end{align*}
    where the second equality follows from \Cref{lem linear form} and the third from \Cref{lem Nov-poly}. But from the cyclicity of the Hecke module $\mathcal{I}(G(\mathbf{Q}_\ell)/G(\mathbf{Z}_\ell))$, the $H(\mathbf{Q}_\ell)$-equivariance of $\mathcal
    Z$, and the definition of $\delta_1$, the expression on the left, can also be written as $\Theta_{\Pi\times\pi}(\mathcal{P}_{\mathrm{Tr}(\delta_1)}')$. The result then follows from a density argument. 
\end{proof}
\end{thm}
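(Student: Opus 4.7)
The plan is to reduce the global motivic identity to a local identity of Hecke operators at $\ell$, and then verify this identity by testing against the unique $H(\mathbf{Q}_\ell)$-equivariant functional on every irreducible unramified principal series $\Pi\times\pi$ of $G(\mathbf{Q}_\ell)$.

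First I would observe that by construction the input data defining $_c\Xi_{\mathrm{mot},M'}(R)$ and $_c\Xi_{\mathrm{mot},M}(R)$ agree outside $\ell$, so the $G(\mathbf{A}_{\mathrm{f}})$-equivariance of the symbol map localizes the computation at $\ell$. Writing $\delta_1:=\phi_{1,1}\otimes(\ell^2-1)\xi_0^{\mathcal{K}_{1,0}}(\ell)$ for the local datum of the $M'$-class and $\delta_0:=\phi_{0,0}\otimes \ch(G(\mathbf{Z}_\ell))$ for that of the $M$-class, a standard push-pull manipulation shows that the global norm from $\mathbf{Q}(\mu_{M'})$ to $\mathbf{Q}(\mu_M)$ corresponds at level $K_M(R)$ to passing from $\delta_1$ to its trace $\mathrm{Tr}(\delta_1)\in C_c^\infty(G(\mathbf{Q}_\ell)/G(\mathbf{Z}_\ell))$. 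Using \Cref{lem intertwining action} to replace $\mathcal{P}_\ell^{\mathrm{Nov}'}(\mathrm{Frob}_\ell^{-1})$ acting on the cyclotomic side by $\mathcal{P}_\ell^{\mathrm{Nov}'}(1)$ acting at level $K_M(R)$, the theorem is reduced to the purely local assertion that the image of $\mathrm{Tr}(\delta_1)$ in $\mathcal{I}(G(\mathbf{Q}_\ell)/G(\mathbf{Z}_\ell))$ equals $\mathcal{P}_\ell^{\mathrm{Nov}'}(1)\cdot \delta_0$.

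Next, by the freeness of $\mathcal{I}(G(\mathbf{Q}_\ell)/G(\mathbf{Z}_\ell))$ over $\mathcal{H}_{G(\mathbf{Q}_\ell)}^\circ$ established in the preceding proposition, this local assertion amounts to the identity of Hecke operators $\mathcal{P}_{\mathrm{Tr}(\delta_1)}=\mathcal{P}_\ell^{\mathrm{Nov}}(1)$ in the sense of \Cref{def unique operator}. By the Satake isomorphism, two elements of the spherical Hecke algebra coincide as soon as they are identified by every unramified character, so by a density argument it suffices to prove that for every irreducible unramified principal series $\Pi\times \pi$ of $G(\mathbf{Q}_\ell)$ the eigenvalues of the two operators agree under the associated spherical eigensystem $\Theta_{\Pi\times\pi}$.

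Finally, for any such $\Pi\times\pi$, the linear form $\mathcal{Z}$ of \Cref{lem linear form} factors through $H(\mathbf{Q}_\ell)$-coinvariants and descends to a map $\mathfrak{Z}:\mathcal{I}(G(\mathbf{Q}_\ell)/G(\mathbf{Z}_\ell))\to\mathbf{C}$. Paired with the normalization $\mathfrak{Z}(\delta_0)=1$ provided by \Cref{lem linear form}, the local zeta integral computation of \cite[Theorem 7.31]{hsu2020eulersystemsmathrmgsp4times}, applied to the $H$-equivariant functional $\phi\otimes\xi\mapsto\mathcal{Z}(\phi\otimes\xi\cdot(W_\Pi^{\mathrm{sph}}\otimes W_\pi^{\mathrm{sph}}))$, yields
\[
\mathfrak{Z}(\mathrm{Tr}(\delta_1))=L^{\mathrm{Nov}}(\Pi\times\pi,0)^{-1}=\Theta_{\Pi\times\pi}(\mathcal{P}_\ell^{\mathrm{Nov}}(1)),
\]
where the last equality is \Cref{lem Nov-poly}. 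Hence $\Theta_{\Pi\times\pi}(\mathcal{P}_{\mathrm{Tr}(\delta_1)})=\Theta_{\Pi\times\pi}(\mathcal{P}_\ell^{\mathrm{Nov}}(1))$, completing the reduction. The hard part of the argument is this last step: identifying the output of the Hsu--Jin--Sakamoto integral against the test vector $\xi_0^{\mathcal{K}_{1,0}}(\ell)$ with the reciprocal of the Novodvorsky $L$-factor, and being careful that the transition from the level structure for $\GSp_4\times_{\GL_1}\GL_2$ in \emph{op.cit.} to our $G$-setup preserves both the Hecke action and the local integral—this is ensured by the compatibility of quotients recorded in \Cref{sec input for G}.
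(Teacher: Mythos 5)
Your proposal follows the paper's own proof closely: reduce the global motivic statement to a local Hecke-operator identity at $\ell$ via the intertwining lemma, then verify this local identity by pairing against the unramified zeta-integral functional $\mathcal{Z}$ of \Cref{lem linear form}, invoking \cite[Theorem 7.31]{hsu2020eulersystemsmathrmgsp4times} to evaluate it on the trace of the test datum, and concluding by density in the spherical Hecke algebra via \Cref{lem Nov-poly}. This is the same route the paper takes, and all the key ingredients (cyclicity/freeness of $\mathcal{I}(G(\mathbf{Q}_\ell)/G(\mathbf{Z}_\ell))$, the $H$-equivariance of $\mathcal{Z}$, the normalization $\mathfrak{Z}(\delta_0)=1$) appear in the right place.

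There is, however, a bookkeeping slip with the transpose involution $'$ that you should be careful about. You correctly reduce to showing that $\mathrm{Tr}(\delta_1)=\mathcal{P}_\ell^{\mathrm{Nov}'}(1)\cdot\delta_0$ in $\mathcal{I}(G(\mathbf{Q}_\ell)/G(\mathbf{Z}_\ell))$; by \Cref{def unique operator} this is equivalent to $\mathcal{P}_{\mathrm{Tr}(\delta_1)}=\mathcal{P}_\ell^{\mathrm{Nov}'}(1)$, i.e.\ $\mathcal{P}_{\mathrm{Tr}(\delta_1)}'=\mathcal{P}_\ell^{\mathrm{Nov}}(1)$, \emph{not} $\mathcal{P}_{\mathrm{Tr}(\delta_1)}=\mathcal{P}_\ell^{\mathrm{Nov}}(1)$ as you write. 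The compensating slip occurs in your final ``hence'': since the Hecke action on $\mathcal{I}$ takes $\ch(G(\mathbf{Z}_\ell))$ to $\theta'$ rather than $\theta$ (and this feeds into the convolution against the spherical Whittaker vector), one actually has $\mathfrak{Z}(\mathcal{P}\cdot\delta_0)=\Theta_{\Pi\times\pi}(\mathcal{P}')$, so the correct deduction from $\mathfrak{Z}(\mathrm{Tr}(\delta_1))=\Theta_{\Pi\times\pi}(\mathcal{P}_\ell^{\mathrm{Nov}}(1))$ is $\Theta_{\Pi\times\pi}(\mathcal{P}_{\mathrm{Tr}(\delta_1)}')=\Theta_{\Pi\times\pi}(\mathcal{P}_\ell^{\mathrm{Nov}}(1))$. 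The two omissions cancel inside your own chain of reductions, which is why nothing looks internally inconsistent, but as written the argument would establish $\mathcal{P}_{\mathrm{Tr}(\delta_1)}=\mathcal{P}_\ell^{\mathrm{Nov}}(1)$ rather than $\mathcal{P}_{\mathrm{Tr}(\delta_1)}'=\mathcal{P}_\ell^{\mathrm{Nov}}(1)$, and these differ in general since $\mathcal{P}_\ell^{\mathrm{Nov}}$ is not invariant under $'$. This is a normalization issue and not a gap in the ideas, but it is worth tracking precisely since the transpose is exactly what produces the $\mathrm{Frob}_\ell^{-1}$ (as opposed to $\mathrm{Frob}_\ell$) in the statement.
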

\Cref{thm motivic tame norm-relations} extends the tame norme-relations of \cite{hsu2020eulersystemsmathrmgsp4times} from a statement in Galois cohomology to a statement in motivic cogomology, and it also covers the boundary case $a=b=d=0$ which was omitted in \textit{op.cit}.
\subsection{Changing the level on the $\GL_2$-factor}\label{sec changing GL2 lvl}
In this section we always keep $M$ fixed. To shorten notation, we write $\mathbf{Q}_{M}$ for $\mathbf{Q}(\mu_{M})$. By $\mathrm{Frob}_\ell$ we will always mean arithmetic Frobenius at $\ell$ as an element of $\mathrm{Gal}(\mathbf{Q}_{M}/\mathbf{Q})$ for any $\ell\nmid M$. We will first specify the input data at $R$, which was taken to be arbitrary in the previous section. 

We fix  $N\in\mathbf{Z}_{\geq 1}$ squarefree and coprime to $c$. We also fix a prime $p$ such that $p\nmid cN$.  For each $\ell|pN$ we fix a $\mathbf{Z}_p$-integral input data $C_\ell\phi_\ell\otimes\xi_\ell$ at some level $K_\ell=K_\ell^{\GSp_4}\times K_\ell^{\GL_2}\subseteq G(\mathbf{Z}_\ell)$. These remain arbitrary.
\begin{rem}
    The integer $N$ will later take the form of the level of a $\GSp_4$ automorphic representation, and the prime $p$ will take form of the residue characteristic of the coefficient field of the associated Galois representation.
\end{rem}
 \begin{defn}\label{def input at R} Let $R\in\mathbf{Z}_{\geq 1}$ coprime to $c$. We define local data at each prime $\ell|R$ as follows. Let ${\ell_0}\nmid pN$ be some auxiliary prime. Fix a smooth unramified character $\mu=\mu_{\ell_0}:\mathbf{Q}_{{\ell_0}}^\times\rightarrow \overline{\mathbf{Z}}.$
 \begin{itemize}\item If $\ell\nmid pN$, we set:
     \begin{enumerate}
     \item $\phi_\ell:=\phi_{\ell,2}:=\ch(\ell^2\mathbf{Z}_\ell\times(1+\ell^2\mathbf{Z}_\ell))$.
\item $K_\ell^{\GSp_4}:=\GSp_4(\mathbf{Z}_\ell)$ and $K_\ell^{\GL_2}:=K_1(\ell^e):=\{k\in\GL_2(\mathbf{Z}_\ell)\ |\ g\equiv \left[\begin{smallmatrix}
    * & * \\
    0 &  1
\end{smallmatrix}\right]\mod \ell^e\}$ where $\ell^e\parallel R$.
         \item  Let $\eta_\ell:=\left[\begin{smallmatrix}
             1 & & \ell^{-1} & \\
             & 1& & \ell ^{-1}\\
              & & 1& \\
              & & & 1
         \end{smallmatrix}\right]$. Then $\xi_\ell:=\xi_{\ell,\mu}:=\xi_\ell^{\GSp_4}\otimes \xi_{\ell,\mu}^{\GL_2}\in C_c^\infty(G(\mathbf{Q}_\ell)/K_\ell^{\GSp_4}\times K_\ell^{\GL_2})$ where
 \begin{align*}\xi_\ell^{\GSp_4}&:=\ch(\GSp_4(\mathbf{Z}_\ell))-\ch(\eta_\ell \GSp_4(\mathbf{Z}_\ell))\\
\xi_{\ell,\mu}^{\GL_2}&:=\begin{dcases} \ell^{-2}(\ell-1)^{-1}\left(\mu(\ell)^{-2}\ch(\left[\begin{smallmatrix}
             & -1 \\
              \ell^2 &
         \end{smallmatrix}\right] K_\ell^{\GL_2})-\ell^{-1/2}\mu(\ell)^{-3}\ch(\left[\begin{smallmatrix}
             & -\ell \\
              \ell^2 &
         \end{smallmatrix}\right] K_\ell^{\GL_2})\right),\ &\ell={\ell_0}\\
         \ell^{-1}(\ell-1)^{-1} \ch(\left[\begin{smallmatrix}
             & -1 \\
              \ell^2 &
         \end{smallmatrix}\right] K_\ell^{\GL_2}),\ &\ell\neq{\ell_0}.
         \end{dcases}\end{align*}
         \item $C_\ell:=\ell^3(\ell-1)^3(\ell+1)^2.$
     \end{enumerate}
     \item If $\ell|pN$, then the local data is the fixed choice $C_\ell\phi_\ell\otimes \xi_\ell$ as above.
     \end{itemize}
 \end{defn}

\noindent As in the previous section, but with this choice of data at $R$, we obtain for each such $R\in\mathbf{Z}_{\geq 1}$ a class $$_c\Xi_{\mathrm{mot}}(\mu[R])\in H_\mathrm{mot}^5(Y_G(U(R))\times \mathbf{Q}_{M},\mathscr{W}_\mathbf{Q}^{a,b,d,*}(3-a-r))[-a-r]_{\overline{\mathbf{Q}}}$$
 where we have dropped the dependence on $M$ in the notation of the classes, and we have added the notation $\mu[R]$. This is to signify that the class $_c\Xi_{\mathrm{mot}}(\mu[R])$ also depends on the integer $R$ and $\mu$.
 \begin{defn}\label{def classes Xi_mot}
    Let $N'\in\mathbf{Z}_{\geq 5}$ coprime to $cpN$, we define the class
     \begin{align*}_c\Xi_{\mathrm{mot}}(\mu[N,N'])&:=\mathrm{pr}_{(N,N'),*}\left(_c\Xi_{\mathrm{mot}}(\mu[NN'])\right)\\
     &\in H^5_\mathrm{mot}(Y_{\GSp_4}(U^{\GSp_4}(N))\times Y_{\GL_2}(U^{\GL_2}(N'))\times \mathbf{Q}_{M},\mathscr{W}_\mathbf{Q}^{a,b,d,*}(3-a-r))[-a-r]_{\overline{\mathbf{Q}}}\end{align*}
     where $\mathrm{pr}_{(N,N')}$ is the natural projection induced by the inclusions $$U^{\GSp_4}(NN')\subseteq U^{\GSp_4}(N)\ \ \mathrm{and}\ \ U^{\GL_2}(NN')\subseteq U^{\GL_2}(N').$$
 \end{defn} 
\noindent  To shorten notation we set 
 $$H_\mathrm{mot}^5(N,N'):=H^5_\mathrm{mot}(Y_{\GSp_4}(U^{\GSp_4}(N))\times Y_{\GL_2}(U^{\GL_2}(N'))\times \mathbf{Q}_{M},\mathscr{W}_\mathbf{Q}^{a,b,d,*}(3-a-r))[-a-r].$$
 Note that by construction, we now have $U^{\GL_2}(N')=U_1(N'):=\{\prod_{\ell^e\parallel N'} K_1(\ell^e)\}\times\{\prod_{\ell\nmid N'} \GL_2(\mathbf{Z}_\ell)\}.$
 
 We write $\mathcal{P}_\ell^\mathrm{spin}(X)\in\mathcal{H}_{\GSp_4(\mathbf{Q}_\ell)}^\circ[X]$ for the degree four polynomial interpolating (spin) $L$-factors $L(\Pi,s)$ of unramified $\GSp_4(\mathbf{Q}_\ell)$-representations $\Pi$. It can be explicetly recovered from the bracketed expression in \eqref{eq: 3} with $i=0$. Note that $\mathcal{P}_\ell^\mathrm{spin}(\ell^{-1/2}X)$ is a polynomial over the $\mathbf{Z}[1/\ell]$-integral Hecke algebra. Finally, we write $\mathrm{pr}_1$ for the canonical projection 
 $$Y_{\GL_2}(U^{\GL_2}(\ell N'))\rightarrow Y_{\GL_2}(U^{\GL_2}( N'))$$
 induced by the inclussion $U^{\GL_2}(\ell N')\subseteq U^{\GL_2}( N').$
 This induces a pushforward map $H_\mathrm{mot}^5(N,\ell N')\rightarrow H_\mathrm{mot}^5(N,N')$ which we denote by $(\mathrm{pr}_1)_*.$ For a prime $\ell$, we once again set $\mathcal{S}_\ell:=\ch(\left[\begin{smallmatrix}
     \ell & \\
     & \ell
 \end{smallmatrix}\right]\GL_2(\mathbf{Z}_\ell))\in\mathcal{H}_{\GL_2(\mathbf{Q}_\ell)}^\circ$ and $\mathcal{T}_\ell:=\ch(\GL_2(\mathbf{Z}_\ell)\left[\begin{smallmatrix}
     \ell & \\
     & 1
 \end{smallmatrix}\right]\GL_2(\mathbf{Z}_\ell))\in \mathcal{H}_{\GL_2(\mathbf{Q}_\ell)}^\circ.$
 \begin{thm}\label{thm gl_2 norm-relations}
        Let $\pi=I(\nu,\mu)$ for some smooth unramified character $\nu$ of $\mathbf{Q}_{\ell_0}^\times$. Let $a_{\ell_0},b_{\ell_0}$ be the spherical $\mathcal{T}_{\ell_0}',\mathcal{S}_{\ell_0}'$ Hecke eigenavlues of $\pi$. If ${\ell_0}\nmid cNN'$ we have 
 \begin{align}\label{eq: 4}
     (\mathrm{pr}_1)_*\left( _c\Xi_{\mathrm{mot}}(\mu[N,{\ell_0} N'])\right)= {\mathcal{P}_{\ell_0}^{\mathrm{spin}}}'(\nu({\ell_0})\mathrm{Frob}_{\ell_0}^{-1})\cdot \ _c\Xi_{\mathrm{mot}}(\mu[N,N'])
 \end{align}
 in the quotient $\left[H_\mathrm{mot}^5(N,N')_{\overline{\mathbf{Q}}}\right]_{\substack{\mathcal{T}_{\ell_0}'=a_{\ell_0}\\
 \mathcal{S}_{\ell_0}'=b_{\ell_0}}}.$
 \end{thm}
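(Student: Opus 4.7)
The proof follows the blueprint of \Cref{thm motivic tame norm-relations}. Since the local input data of the two classes agrees at every prime $\ell \neq \ell_0$, the $(H \times G)(\mathbf{A}_\mathrm{f})$-equivariance of the symbol map together with the reduction argument of \cite[Theorem $9.4.3$]{Loeffler_2021} reduces \eqref{eq: 4} to a local identity at $\ell_0$ in the cyclic $\mathcal{H}_{G(\mathbf{Q}_{\ell_0})}^\circ$-module $\mathcal{I}(G(\mathbf{Q}_{\ell_0})/G(\mathbf{Z}_{\ell_0}))$. The Galois element $\mathrm{Frob}_{\ell_0}^{-1}$ is absorbed via a direct analogue of \Cref{lem intertwining action}: one checks from the explicit formula \eqref{eq: 3} (taking $i=0$) that the degree-$i$ coefficient of $\mathcal{P}_{\ell_0}^{\mathrm{spin}}(X)$ has $\GSp_4$-multiplier degree $i$, so substituting $X = \mathrm{Frob}_{\ell_0}^{-1}$ on the target $Y_G(U(\ldots)) \times \mathbf{Q}_M$ corresponds to substituting $X = 1$ on the source $Y_G(K_M(\ldots))$. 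Hence it suffices to show, in the quotient of $\mathcal{I}(G(\mathbf{Q}_{\ell_0})/G(\mathbf{Z}_{\ell_0}))$ cut out by the $\pi = I(\nu, \mu)$-eigensystem of $\mathcal{H}_{\GL_2(\mathbf{Q}_{\ell_0})}^\circ$, that the trace to spherical $\GL_2$-level of the local datum $\phi_{\ell_0, 2} \otimes C_{\ell_0}\xi_{\ell_0, \mu}$ coincides with ${\mathcal{P}_{\ell_0}^{\mathrm{spin}}}'(\nu(\ell_0)) \cdot (\ch(\mathbf{Z}_{\ell_0}^2) \otimes \ch(G(\mathbf{Z}_{\ell_0})))$.

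By freeness of $\mathcal{I}(G(\mathbf{Q}_{\ell_0})/G(\mathbf{Z}_{\ell_0}))$ over $\mathcal{H}_{G(\mathbf{Q}_{\ell_0})}^\circ$, this identity is equivalent to the equality of the images of both sides under the spherical linear form $\mathcal{Z}$ of \Cref{lem linear form}, evaluated at $W_\Pi^{\mathrm{sph}} \otimes W_\pi^{\mathrm{sph}}$ for varying irreducible unramified generic principal-series $\Pi$. The key computational input is the basic formula \eqref{eq: basic formula}, which factors Novodvorsky's zeta-integral through Piatetski-Shapiro's via the extraneous factor $L(\Pi \times \mu, s)$. The linear combination $\mu(\ell_0)^{-2}\,\ch(\cdots) - \ell_0^{-1/2}\mu(\ell_0)^{-3}\,\ch(\cdots)$ appearing in $\xi_{\ell_0, \mu}^{\GL_2}$ (for $\ell = \ell_0$) is engineered so that the resulting $\pi$-Whittaker function takes the form $W^{\phi}(\nu, \mu)$ for a specific Schwartz function $\phi$ adapted to $\phi_{\ell_0, 2}$; this contributes exactly the factor $L(\Pi \times \mu, 0)^{-1}$ needed to cancel the extraneous factor in \eqref{eq: basic formula}. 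After incorporating the normalization $C_{\ell_0} = \ell_0^3(\ell_0-1)^3(\ell_0+1)^2$, the remaining Piatetski-Shapiro zeta-integral of $\Pi$ against $\phi_{\ell_0, 2}$ with twist character $\nu$ evaluates to $L(\Pi \times \nu, 0)^{-1}$. By the defining property of $\mathcal{P}_{\ell_0}^{\mathrm{spin}}$ as the interpolator of $L(\Pi, s)^{-1}$, this value coincides with that of $\mathcal{P}_{\ell_0}^{\mathrm{spin}}(\nu(\ell_0))$ on the spherical Hecke eigensystem of $\Pi$, and a density argument in the spectrum of $\mathcal{H}_{\GSp_4(\mathbf{Q}_{\ell_0})}^\circ$ completes the identification.

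The main technical obstacle is the explicit Schwartz-theoretic zeta-integral computation: verifying that the specific linear combination of characteristic functions and scalar coefficients in $\xi_{\ell_0, \mu}^{\GL_2}$, combined with $\phi_{\ell_0, 2}$ and the normalization $C_{\ell_0}$, conspire to produce exactly the required cancellation of $L(\Pi \times \mu, 0)$ in the basic formula and the correct evaluation of the residual Piatetski-Shapiro integral. This is the $\GSp_4 \times \GL_2$ counterpart of the Schwartz-data modifications performed in \cite[\S $5.2$]{Lei_Loeffler_Zerbes_2015} in the Rankin--Selberg setting, now built on the Novodvorsky/Piatetski-Shapiro transfer of \cite{loeffler2023localzetaintegralsgsp4gsp4} and the explicit Bessel-model description of generic unramified $\GSp_4$-representations from \cite{roberts2016some}.
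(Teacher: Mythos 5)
Your outline follows the paper's proof faithfully and identifies all the right ingredients: reduction to the local module $\mathcal{I}(G(\mathbf{Q}_{\ell_0})/G(\mathbf{Z}_{\ell_0}))$, absorption of Frobenius via the multiplier-degree matching in $\mathcal{P}_{\ell_0}^{\mathrm{spin}}$, evaluation under the period $\mathcal{Z}$ against a dense family of unramified $\Pi$, the passage from Novodvorsky's to Piatetski-Shapiro's integral via the basic formula \eqref{eq: basic formula}, and the cancellation $L(\Pi\times\pi,s)=L(\Pi\times\mu,s)L(\Pi\times\nu,s)$.

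What you flag as the ``main technical obstacle'' is in fact where most of the paper's proof lives, and you stop short of it. Concretely: you assert that after normalization the Piatetski-Shapiro integral produces $L(\Pi\times\nu,0)^{-1}$, but this is not an interpolation or formal statement --- one must actually show that $Z(C_{\ell_0}\tilde{B}_{W_{\Pi\times\nu}},\phi_{\ell_0},\phi_{\ell_0};\Lambda\times\nu,1,s)\equiv 1$ identically in $s$, so that dividing by $L(\Pi\times\nu,s)$ and taking $s\to 0$ gives the interpolated value. The paper does this by unfolding the integral \`a la R\"osner--Weissauer, identifying the stabilizer $U_1(\ell_0^2)\times_{\GL_1}U_1(\ell_0^2)$ of the test vector $\tilde{B}_{W_\sigma}$ (via $\eta^{-1}(U_1\times_{\GL_1}U_1)\eta\subseteq\GSp_4(\mathbf{Z}_{\ell_0})$), computing the support of the Siegel sections, explicitly summing $Z_{\mathrm{reg}}^{\mathrm{PS}}$ over the coset representatives $m_u\in U_0(\ell_0^2)/U_1(\ell_0^2)$ using the Bessel-function equivariance of $\tilde{B}_{W_\sigma}$ and the depletion factor $1-\psi(a(uv\ell_0)^{-1})$, and verifying that everything collapses to $1/C_{\ell_0}$. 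Similarly, the claim that $W_\pi=W^{\phi_{\ell_0}}(\nu,\mu)$ precisely (with $\phi_{\ell_0}=\phi_{\ell_0,2}$, not some other adapted Schwartz function) is a nontrivial verification using the explicit description of Siegel sections. Your proposal therefore captures the architecture correctly but elides the computations that make the normalizing constants in \Cref{def input at R} come out right, which is where the actual content of the theorem is.
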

 \begin{proof}
     It clearly suffices to prove this after base change to $\mathbf{C}$, and we shall do so implictely. We first note that the classes $_c\Xi_{\mathrm{mot}}(\mu[N,{\ell_0} N'])$ and $_c\Xi_{\mathrm{mot}}(\mu[N, N'])$ differ only at ${\ell_0}$. Thus, in the same manner as the proof of \cite[Theorem 9.4.3]{Loeffler_2021}, fixing the data away from ${\ell_0}$ used to define the two classes, we get a commutative diagram at ${\ell_0}$
     \[\begin{tikzcd}[ampersand replacement=\&]
	{\mathcal{I}(G(\mathbf{Q}_{\ell_0})/U)} \&\& {H_\mathrm{mot}^5(N,{\ell_0} N')} \\
	{\mathcal{I}(G(\mathbf{Q}_{\ell_0})/G(\mathbf{Z}_{\ell_0}) )} \&\& {H_\mathrm{mot}^5(N,N')}
	\arrow["{\mathrm{Symbl}^{[a,b,d,r],U}_{\ell_0}}", from=1-1, to=1-3]
	\arrow["{\mathrm{Tr}^U_{G(\mathbf{Z}_{\ell_0})}}"', from=1-1, to=2-1]
	\arrow["{(\mathrm{pr}_1)_*}", from=1-3, to=2-3]
	\arrow["{\mathrm{Symbl}^{[a,b,d,r],\mathrm{sph}}_{\ell_0}}"', from=2-1, to=2-3]
\end{tikzcd}\]
where $U:=\mathrm{GSp}_4(\mathbf{Z}_{\ell_0})\times K_1({\ell_0})$ since $v_{\ell_0}(\ell_0NN')=1$. Moreover, the map $\mathrm{Symbl}^{[a,b,d,r],\mathrm{sph}}_{\ell_0}$ intertwines the action of the Hecke operator $$\ch(G(\mathbf{Z}_{\ell_0})(t_1,t_2) G(\mathbf{Z}_{\ell_0})),\ \ (t_1,t_2)\in T(\mathbf{Q}_{\ell_0})$$ 
on the source, with the action of $$\ch(G(\mathbf{Z}_{\ell_0})(t_1,t_2) G(\mathbf{Z}_{\ell_0}))\mathrm{Frob}_{\ell_0}^{v_{\ell_0}(\mu(t_1))}$$
on the target. Given an element $\mathcal{P}\in\mathcal{H}_{G(\mathbf{Q}_{\ell_0})}^\circ$ we write $\mathcal{P}^{\mathrm{Frob}_{\ell_0}}$ for this intertwined action on the target. Since ${\ell_0}|{\ell_0} N'$ and ${\ell_0}\nmid N$, we have by construction and the choice of input data t $R:=N\ell_0N'$ in \Cref{def input at R}, that $$_c\Xi_{\mathrm{mot}}(\mu[N,{\ell_0} N'])=\mathrm{Symbl}^{[a,b,d,r],U}_{\ell_0}\left(\delta_{{\ell_0},\mu}\right),\ \ \delta_{{\ell_0},\mu}:=C_{\ell_0}\phi_{\ell_0} \otimes\left\{\ch(\GSp_4(\mathbf{Z}_{\ell_0})-\ch(\eta_{\ell_0} \GSp_4(\mathbf{Z}_{\ell_0}))\otimes\xi_{{\ell_0},\mu}^{\GL_2}\right\}.$$
Moreover, since ${\ell_0}\nmid NN'$, it follows by the earlier choices made in \Cref{sec input for G}, that 
$$_c\Xi_{\mathrm{mot}}(\mu[N,N'])=\mathrm{Symbl}^{[a,b,d,r],\mathrm{sph}}_{\ell_0}\left(\delta_{{\ell_0},0}\right),\ \ \delta_{{\ell_0},0}:=\ch(\mathbf{Z}_{\ell_0}^2)\otimes\ch(G(\mathbf{Z}_{\ell_0})).$$
Thus, by the diagram above and \Cref{def unique operator}, we have 
\begin{align}\label{eq: 5}
    (\mathrm{pr}_1)_*\left( _c\Xi_{\mathrm{mot}}(\mu[N,{\ell_0} N'])\right)&=\mathrm{Symbl}^{[a,b,d,r],\mathrm{sph}}_{\ell_0}(\mathrm{Tr}(\delta_{{\ell_0},\mu})),\ \ \mathrm{Tr}({\delta}_{\ell_0,\mu}):=\mathrm{Tr}^U_{G(\mathbf{Z}_{\ell_0})}(\delta_{{\ell_0},\mu})\\
 \nonumber   &=\mathcal{P}_{\mathrm{Tr}(\delta_{{\ell_0},\mu})}^{\mathrm{Frob}_{\ell_0}}\cdot\ _c\Xi_{\mathrm{mot}}([N, N']).
\end{align}
It follows from \eqref{eq: 3} that $({\mathcal{P}_{\ell_0}^{\mathrm{spin}}}')^{\mathrm{Frob}_{\ell_0}}={\mathcal{P}_{\ell_0}^{\mathrm{spin}}}'(\mathrm{Frob}_{\ell_0}^{-1})$. Thus, it suffices to show that $\mathcal{P}_{\mathrm{Tr}(\delta_{{\ell_0},\mu})}={\mathcal{P}_{\ell_0}^{\mathrm{spin}}}'(\nu({\ell_0}))$ in the quotient $\mathcal{H}_{G(\mathbf{Q}_{\ell_0})}^\circ/\mathrm{ker(\Theta_{\pi^\vee})}.$ As in the proof of \cite[Theorem $5.2.4$]{Loeffler_2021}, we will do this by comparing the image of the period $\mathcal{Z}$ evaluated on $\mathrm{Tr}(\delta_{{\ell_0},\mu})$ and $\delta_{{\ell_0},0}$, for a sufficiently dense family of representations. 

Let $\Pi$ be irreducible unramified principal-series representation of $\GSp_4(\mathbf{Q}_{\ell_0})$. We consider the period 
$$\mathcal{Z}\in \mathrm{Hom}_{H(\mathbf{Q}_{\ell_0})}(\mathcal{S}(\mathbf{Q}_{\ell_0}^2)\otimes\Pi\otimes\pi,\mathbf{1}).$$This period lifts to a linear form on $\mathcal{I}(G(\mathbf{Q}_{\ell_0})/G(\mathbf{Z}_{\ell_0}))$ in the usual manner; i.e. 
$$\mathcal{I}(G(\mathbf{Q}_{\ell_0})/G(\mathbf{Z}_{\ell_0}))\ni \phi\otimes\xi\mapsto\mathcal{Z}(\phi\otimes \xi\cdot(W_\Pi^\mathrm{sph}\otimes W_\pi^\mathrm{sph})).$$
Thus, using the definition of the Hecke operator $\mathcal{P}_{\mathrm{Tr}(\delta_{{\ell_0},\mu})}$ and \Cref{lem linear form}, we have
\begin{align}\label{eq: 6}
\mathcal{Z}\left(C_{\ell_0}\phi_{\ell_0}\otimes(W_\Pi^\mathrm{sph}-\eta W_\Pi^\mathrm{sph})\otimes\left(\mathrm{Tr}^{K_1({\ell_0})}_{\GL_2(\mathbf{Z}_{\ell_0})}(\xi_{{\ell_0},\mu}^{\GL_2})\cdot W_{\pi}^\mathrm{sph}\right)\right)=\Theta_{\Pi\times \pi}\left(\mathcal{P}_{\mathrm{Tr}(\delta_{{\ell_0},\mu})}'\right).
\end{align}
For ease of notation, write $W_\Pi:=W_\Pi^\mathrm{sph}-\eta W_\Pi^\mathrm{sph}$ and $W_\pi:=\left(\mathrm{Tr}^{K_1({\ell_0})}_{\GL_2(\mathbf{Z}_{\ell_0})}(\xi_{{\ell_0},\mu}^{\GL_2})\right)\cdot W_\pi^\mathrm{sph}$.
The lefthand side of \eqref{eq: 6} is given by 
\begin{align}\label{eq: 7}\lim_{s\rightarrow 0}\frac{Z(C_{\ell_0}\phi_{\ell_0},W_\Pi,W_{\pi};s)}{L(\Pi\times\pi,s)}.\end{align}
By the definition of $\xi_{{\ell_0},\mu}^{\GL_2}$ and the fact that $\pi=I(\nu,\mu)$, one can check, for example using \cite[\S $6.2$]{loeffler2021zetaintegralsunramifiedrepresentationsgsp4}, that $W_\pi$ is precisely given by the Whittaker function $W^{\phi_{\ell_0}}(\nu,\mu)$ of \Cref{sec: siegel sections}, in the Whittaker model of $I(\nu,\mu)$. Thus, by the basic formula \eqref{eq: basic formula} we have that $Z(C_{\ell_0}\phi_{\ell_0},W_\Pi,W_{\pi};s)$ is given by the spin $L$-factor $L(\Pi\times\mu,s)$ times the Piatetski-Shapiro zeta integral $Z(C_{\ell_0}\tilde{B}_{W_\Pi},\phi_{\ell_0},\phi_{\ell_0};\Lambda,\nu,s)$. This coincides with the integral $Z(C_{\ell_0}\tilde{B}_{W_{\Pi\times\nu}},\phi_{\ell_0},\phi_{\ell_0};\Lambda\times\nu,1,s)$. Combining this with \eqref{eq: 7} and the fact that $L(\Pi\times\pi,s)=L(\Pi\times\mu,s)L(\Pi\times\nu,s)$, we see that the lefthand side of \eqref{eq: 6} is actually given by 
\begin{align}\label{eq: 8}
    \lim_{s\rightarrow 0}\frac{Z(C_{\ell_0}\tilde{B}_{W_{\Pi\times\nu}},\phi_{\ell_0},\phi_{\ell_0};\Lambda\times\nu,1,s)}{L(\Pi\times\nu,s)}.
\end{align}
We now claim that $Z(C_{\ell_0}\tilde{B}_{W_{\Pi\times\nu}},\phi_{\ell_0},\phi_{\ell_0};\Lambda\times\nu,1,s)=1$. For ease of notation, we write $\sigma:=\Pi\times\nu$ and $\lambda=(\lambda_1,\lambda_2):=\Lambda\times\nu$. Applying the same unfolding of the integral as the one in \cite[Proposition $2.1$]{rösner2020spinoreulerfactorsgsp4} (after translating to our matrix model and notation), we see that $Z(\tilde{B}_{W_{\Pi\times\mu}},\phi_{\ell_0},\phi_{\ell_0};\Lambda\times\mu,1,s)$ is given by 
\begin{align}\label{eq: 9}
    \int_{\mathrm{SL}_2(\mathbf{Z}_{\ell_0})\times\mathrm{SL}_2(\mathbf{Z}_{\ell_0})}\int_{\mathbf{Q}_{\ell_0}^\times\times\mathbf{Q}_{\ell_0}^\times}Z_{\mathrm{reg}}^{\mathrm{PS}}((k_1,k_2)\tilde{B}_{W_\sigma};s)\phi_{\ell_0}((0,x)k_1)\phi_{\ell_0}((0,y)k_2)\lambda_1(x)\lambda_2(y)|xy|^{s+\tfrac{1}{2}}\ dk_1dk_2d^\times x d^\times y.
\end{align}
where $dk_i$ is normalized to give $\mathrm{SL}_2(\mathbf{Z}_{\ell_0})$ volume $1$ and 
$$Z_\mathrm{reg}^\mathrm{PS}((k_1,k_2)\tilde{B}_{W_\sigma};s)=\int_{\mathbf{Q}_{\ell_0}^\times} \tilde{B}_{W_\sigma}(\left[\begin{smallmatrix}
    a & & & \\
    & a & & \\
    & & 1& \\
    & & & 1
\end{smallmatrix}\right](k_1,k_2))|a|^{s-\tfrac{3}{2}}\ d^\times a.$$
Let 
$$U_0:=\mathrm{SL}_2(\mathbf{Z}_{\ell_0})\cap \mathrm{Iw}(\ell_0^2),\ \ U_1:=\mathrm{SL}_2(\mathbf{Z}_{\ell_0})\cap K_1(\ell_0^2),\ \ K_1(\ell_0^2):=\{k\in\GL_2(\mathbf{Z}_{\ell_0})\ |\ k\equiv \left[\begin{smallmatrix}
    * & * \\
    0& 1
\end{smallmatrix}\right]\mod \ell_0^2\}.$$
One checks that $\eta^{-1} (U_1\times_{\GL_1} U_1)\eta\subseteq\GSp_4(\mathbf{Z}_{\ell_0})$ and thus $U_1\times_{\GL_1} U_1$ stabilizes $\tilde{B}_{W_\sigma}$ by the definition of $W_\sigma.$ Since it also stabilizes $\phi_{\ell_0}\times\phi_{\ell_0}$, the quadruple integral in \eqref{eq: 9} is given by 
\begin{align*}
    \sum_{\substack{k_1,k_2\ \mathrm{in}\\\mathrm{SL}_2(\mathbf{Z}_{\ell_0})/U_1(\ell_0^2)}}V^2\cdot Z_\mathrm{reg}^\mathrm{PS}((k_1,k_2)\tilde{B}_{W_\sigma};s)\int_{\mathbf{Q}_{\ell_0}^\times}\phi_{\ell_0}((0,x)k_1)\lambda_1(x)|x|^{s+\tfrac{1}{2}}\ d^\times x \int_{\mathbf{Q}_{\ell_0}^\times}\phi_{\ell_0}((0,y)k_2)\lambda_2(y)|y|^{s+\tfrac{1}{2}}\ d^\times y
\end{align*}
where $V:=\vol(U_1(\ell_0^2),dk_i)=\ell_0^{-2}(\ell_0^2-1)^{-1}.$ Since the characters $\lambda_i$ are unramified by construction, the two Siegel sections associated to $\phi_{\ell_0}$ as functions of $k_i$, are supported on $U_0(\ell_0^2)$ where they take the value ${\ell_0}^{-1}({\ell_0}-1)^{-1}.$ Thus, \eqref{eq: 9} becomes 
$$\frac{V^2}{\ell_0^2({\ell_0}-1)^2}\cdot \sum_{\substack{k_1,k_2\ \mathrm{in}\\U_0(\ell_0^2)/U_1(\ell_0^2)}} Z_\mathrm{reg}^\mathrm{PS}((k_1,k_2)\tilde{B}_{W_\sigma};s).$$
A complete set of distinct coset representatives of $U_0(\ell_0^2)/U_1(\ell_0^2)$ is given by $\{m_u:=\left[\begin{smallmatrix}
    u^{-1} & \\
    & u 
\end{smallmatrix}\right]\ |\ u\in(\mathbf{Z}/\ell_0^2\mathbf{Z})^\times\}.$ Computing $(m_u,m_v)\cdot\eta \cdot(m_u,m_v)^{-1}$ and using the equivariance properties of the Bessel funtion, the integral $Z_\mathrm{reg}^\mathrm{PS}((m_u,m_v)\tilde{B}_{W_\sigma};s)$ is given by $\int_{\mathbf{Q}_{\ell_0}^\times}\{1-\psi(a(uv{\ell_0})^{-1})\}\tilde{B}_{W_\sigma^\mathrm{sph}}(\mathrm{diag}(a,a,1,1))|a|^{s-\tfrac{3}{2}}\ d^\times a$, which is simply equal to $\frac{{\ell_0}}{{\ell_0}-1}$. Hence \eqref{eq: 9} simplifies to $\frac{{\ell_0} V^2}{{\ell_0}-1}=\frac{1}{C_{\ell_0}}$ as claimed. Thus, from \Cref{eq: 6} and \eqref{eq: 8}, we obtain 
\begin{align}
    \Theta_{\Pi\times\pi}\left( \mathcal{P}_{\mathrm{Tr}(\delta_{{\ell_0},\mu)})}'\right)=L(\Pi\times\nu,0)^{-1}=\Theta_{\Pi\times\pi}\left(\mathcal{P}_{\ell_0}^{\mathrm{spin}}(\nu({\ell_0}))\right).
\end{align}
Then it immediately follows that 
\begin{align*}
    \Theta_{\Pi^\vee\times\pi^\vee}\left( \mathcal{P}_{\mathrm{Tr}(\delta_{{\ell_0},\mu)})}\right)=\Theta_{\Pi^\vee\times\pi^\vee}\left(\mathcal{P}_{\ell_0}^{\mathrm{spin}'}(\nu({\ell_0}))\right).
\end{align*}
This holds for all such representations $\Pi$ and thus the result then follows by a density argument in the Hecke algebra.
\end{proof}

\begin{prop}\label{prop: integrality} Suppose $p$ is now coprime to $MNN'$. Then the image of the class $_c\Xi_{\mathrm{mot}}(\mu[N,N'])$ under the $p$-adic \'etale regulator map lies in the cohomology with $\overline{\mathbf{Z}}_p$-coefficients.
    \begin{proof}
        We need to check integrality prime by prime. For primes dividing $pN$ this follows by definition of the input data. For primes not dividing $pNN'$, this follows by part of \cite[Theorem $8.7$] {hsu2020eulersystemsmathrmgsp4times} (the non-trivial cases being the primes dividing $M$, which we have fixed here). It remains to check primes dividing $N'$ and not $N$. In other words, if $\ell$ is such a prime, we need to check that the local input data $C_\ell\phi_{\ell,2}\otimes\xi_{\ell,\mu}$ of \Cref{def input at R}, is $\overline{\mathbf{Z}}_p$-integral at level $\GSp_4(\mathbf{Z}_\ell)\times K_1(\ell^e)$, in the sense of \cite[\S $3$]{Loeffler_2021}. Since $\ell\neq p$ we may assume that $e=1$ since powers of $\ell$ are inconsequential. Write 
        \begin{align*}K_1(\ell^2)&:=\{k\in\GL_2(\mathbf{Z}_\ell)\ |\ k\equiv\left[\begin{smallmatrix}
            * & * \\
            0 & 1
        \end{smallmatrix}\right]\mod \ell^2\}\\
        K^1(\ell^2)&:=\{k\in\GL_2(\mathbf{Z}_\ell)\ |\ k\equiv\left[\begin{smallmatrix}
            1 & * \\
            0 & *
        \end{smallmatrix}\right]\mod \ell^2\}
        \end{align*}
        Let $m\in\GL_2(\mathbf{Q}_\ell)$ be either of the matrices $\left[\begin{smallmatrix}
            & -1\\
            \ell^2 & 
        \end{smallmatrix}\right],\left[\begin{smallmatrix}
            & -\ell\\
            \ell^2 & 
        \end{smallmatrix}\right]$, which appear in the definition of $\xi_{\ell,\mu}^{\GL_2}$ in \Cref{def input at R}. An eassy check shows that  $$K_1(\ell^2)\times_{\GL_1} K^1(\ell^2) \subseteq\mathrm{Stab}_H(\phi_{\ell,2})\cap(\eta_\ell,m)\cdot\left(\GSp_4(\mathbf{Z}_\ell)\times K_1(\ell)\right)\cdot (\eta_\ell,m)^{-1}.$$
        Moreover, $\vol_{H(\mathbf{Q}_\ell)}(K_1(\ell^2)\times_{\GL_1} K^1(\ell^2))^{-1}=\ell^4(\ell^2-1)^2$ which coincides with $\frac{C_\ell}{\ell-1}$ up to a power of $\ell\neq p$. This concludes the proof.  
    \end{proof}
\end{prop}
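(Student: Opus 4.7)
The plan is to reduce to a local check at each finite prime $\ell$, using the definition of integrality from \cite[\S $3$]{Loeffler_2021}: an element $\phi\otimes\xi\in\mathcal{S}(\mathbf{Q}_\ell^2,\overline{\mathbf{Z}}_p)\otimes C_c^\infty(G(\mathbf{Q}_\ell)/K_\ell,\overline{\mathbf{Z}}_p)$ is integral of level $K_\ell$ provided $\phi\otimes\xi$ is supported on pairs $(v,g)$ with $v\in\mathrm{Stab}_H(\phi)\cdot gK_\ell g^{-1}$-stable values, with an appropriate volume factor dividing the scalar in front. Since the étale regulator is compatible with the local structure of the input data, the problem splits prime-by-prime.

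The next step is to break the primes into three classes. At primes $\ell|pN$ the local data $C_\ell\phi_\ell\otimes\xi_\ell$ was \emph{chosen} to be $\overline{\mathbf{Z}}_p$-integral in \Cref{sec input for G}, so there is nothing to check. At primes $\ell\nmid pNN'M$, the data is the spherical data $\ch(\mathbf{Z}_\ell^2)\otimes\ch(G(\mathbf{Z}_\ell))$ which is trivially integral, and at primes $\ell|M$ integrality is covered by \cite[Theorem $8.7$]{hsu2020eulersystemsmathrmgsp4times}. This leaves only primes $\ell$ dividing $N'$ but not $pNM$, where the input data is the one from \Cref{def input at R} and has not yet been tested for integrality.

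At such a prime $\ell$ one may replace $\ell^e\parallel N'$ by $\ell$ because $\ell\ne p$ and hence any power of $\ell$ in the denominator does not obstruct $\overline{\mathbf{Z}}_p$-integrality. Thus the task reduces to showing that $C_\ell\phi_{\ell,2}\otimes\xi_{\ell,\mu}$ is integral at level $\GSp_4(\mathbf{Z}_\ell)\times K_1(\ell)$. The core computation is to exhibit an explicit open compact subgroup $\Gamma\subseteq H(\mathbf{Q}_\ell)$ which simultaneously stabilizes $\phi_{\ell,2}$ and is contained in the conjugate $(\eta_\ell,m)\cdot(\GSp_4(\mathbf{Z}_\ell)\times K_1(\ell))\cdot(\eta_\ell,m)^{-1}$ for each matrix $m\in\GL_2(\mathbf{Q}_\ell)$ appearing in the definition of $\xi_{\ell,\mu}^{\GL_2}$, and such that $\vol_{H(\mathbf{Q}_\ell)}(\Gamma)^{-1}$ divides (up to powers of $\ell$) the scalar $C_\ell/(\ell-1)=\ell^3(\ell-1)^2(\ell+1)^2$. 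The natural candidate is $\Gamma:=K_1(\ell^2)\times_{\GL_1}K^1(\ell^2)$, whose volume is $\ell^{-4}(\ell^2-1)^{-2}$, matching the scalar exactly up to units in $\overline{\mathbf{Z}}_p$.

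The main obstacle will be the matrix-level verification of the containment $\Gamma\subseteq(\eta_\ell,m)\cdot(\GSp_4(\mathbf{Z}_\ell)\times K_1(\ell))\cdot(\eta_\ell,m)^{-1}$: one must explicitly conjugate the two-by-two upper-triangular mod-$\ell^2$ condition by $m$ and the corresponding symplectic-$\ell^{-1}$-translation $\eta_\ell$, and check that the resulting congruence conditions on $H(\mathbf{Z}_\ell)$ are all automatic. Stabilization of $\phi_{\ell,2}=\ch(\ell^2\mathbf{Z}_\ell\times(1+\ell^2\mathbf{Z}_\ell))$ by $\Gamma$ under the first-projection action of $H$ is a direct check. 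Once both are in place, the volume comparison gives the claimed integrality.
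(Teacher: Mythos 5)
Your proposal is correct and follows essentially the same approach as the paper: reduce to a prime-by-prime local integrality check, dispose of primes dividing $pN$ and primes not dividing $pNN'$ by appeal to the chosen data and \cite[Theorem $8.7$]{hsu2020eulersystemsmathrmgsp4times} respectively, and then for the remaining primes $\ell\|N'$ exhibit the subgroup $K_1(\ell^2)\times_{\GL_1}K^1(\ell^2)$ inside $\mathrm{Stab}_H(\phi_{\ell,2})\cap(\eta_\ell,m)\bigl(\GSp_4(\mathbf{Z}_\ell)\times K_1(\ell)\bigr)(\eta_\ell,m)^{-1}$ and compare its volume $\ell^{-4}(\ell^2-1)^{-2}$ with the scalar $C_\ell/(\ell-1)$. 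This matches the paper's argument step for step, including the reduction to $e=1$ and the identification of the same stabilizing subgroup.
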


\section{Norm-relations over ray class groups}\label{sec norm-relations over ray class groups}
\subsection{Introducting a Groessencharacter}\label{sec introducing a grossernacharacter}
We follow the setup of \cite{Lei_Loeffler_Zerbes_2015}. Let $K/\mathbf{Q}$ be an imaginary quadratic field of square-free discriminant $\Delta_K$. Let $\psi$ a Groessencharacter of $K$ of infinity-type $(-1,0)$ and modulus $\mathfrak{f}$, taking values in a Galois number field $L$. We let $\omega$ be the unique Dirichlet character modulo $N_{K/\mathbf{Q}}(\mathfrak{f})$ for which $\psi((n))=n\omega(n)$ for all $(n,N_{K/\mathbf{Q}}(\mathfrak{f}))=1$.  Henceforth, we also assume that the integer $M$ defining the cyclotomic field which was fixed in the previous section is $1.$
\begin{thm}\emph{(}\cite[Theorem $3.1.1$]{Lei_Loeffler_Zerbes_2015}, \cite[Theorem $4.8.2$]{miyake2006modular}\emph{)}\label{thm miyaki}
    The formal $q$-expansion 
    $$\sum_{\substack{\mathfrak{a}\subseteq\mathcal{O}_K\\(\mathfrak{a},\mathfrak{f})=1}}\psi(\mathfrak{a})q^{N_{K/\mathbf{Q}}(\mathfrak{a})},$$
    is the $q$-expansion of a normalized cuspidal Hecke eigenform 
    $g_\psi\in S_2(\Gamma_1(N_\mathfrak{f}),\omega\varepsilon_K)$,
    where $N_\mathfrak{f}:=N_{K/\mathbf{Q}}(\mathfrak{f})\Delta_K$ and $\varepsilon_K$ is the quadratic Dirichlet character associated to $K$.
\end{thm}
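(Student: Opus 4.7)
The plan is to prove this classical CM-form statement via the analytic properties of the Hecke $L$-function attached to $\psi$ and an application of Weil's converse theorem, following Hecke's original theta-series construction. First, I would note that the infinity type $(-1,0)$ forces $|\psi(\mathfrak{a})| = N_{K/\mathbf{Q}}(\mathfrak{a})^{1/2}$, which immediately gives absolute convergence of the displayed $q$-series on the upper half plane, defining a holomorphic function $g_\psi:\mathcal{H}\to\mathbf{C}$. The strategy is then to show that $g_\psi$ is the Mellin inverse of the Hecke $L$-function $L(s,\psi) = \sum_{(\mathfrak{a},\mathfrak{f})=1}\psi(\mathfrak{a}) N\mathfrak{a}^{-s}$ and to read modularity off the functional equation.

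Second, I would invoke Hecke/Tate to obtain analytic continuation and a functional equation for $L(s,\psi)$ and all its Dirichlet twists $L(s,\psi\chi)$ with $\chi$ of conductor coprime to $N_\mathfrak{f}$. The key computation is that the completed $L$-function $\Lambda(s,\psi) = (2\pi)^{-s}\Gamma(s)L(s,\psi)$ satisfies $\Lambda(s,\psi) = \varepsilon(\psi)\Lambda(2-s,\psi^{-1}\| \cdot\|)$ with analytic conductor equal to $N_\mathfrak{f} = N_{K/\mathbf{Q}}(\mathfrak{f})\Delta_K$; the $\Gamma_\mathbf{C}$-factor is precisely of weight-$2$ shape, and the $\Delta_K$ contribution to the conductor arises from the relative discriminant via the inductive conductor formula applied to $\mathrm{Ind}_{G_K}^{G_\mathbf{Q}}\psi$. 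By Weil's converse theorem applied to this family of twists, $g_\psi$ transforms as a weight-$2$ modular form on $\Gamma_1(N_\mathfrak{f})$, and the nebentypus $\omega\varepsilon_K$ is pinned down by the central character formula $\psi((n)) = n\omega(n)$ together with the factor $\varepsilon_K$ coming from induction.

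Third, I would verify cuspidality and the eigenform property. Since $\psi$ has infinite order (its archimedean component is nontrivial), it is not induced from a Dirichlet character over $\mathbf{Q}$, so $L(s,\psi)$ is entire, which under the converse theorem rules out Eisenstein contributions and forces $g_\psi$ to be cuspidal. The Hecke eigenform property, as well as the explicit eigenvalues, follow from the Euler product $L(s,\psi) = \prod_\ell L_\ell(s,\psi)$: at a split rational prime $\ell = \mathfrak{l}\bar{\mathfrak{l}} \nmid \mathfrak{f}$, the local factor is $(1-\psi(\mathfrak{l})\ell^{-s})^{-1}(1-\psi(\bar{\mathfrak{l}})\ell^{-s})^{-1}$ with $\psi(\mathfrak{l})\psi(\bar{\mathfrak{l}}) = \omega(\ell)\ell$; at an inert prime $\ell$ the factor is $(1-\psi(\ell\mathcal{O}_K)\ell^{-2s})^{-1}$, matching the standard weight-$2$ shape $(1-a_\ell \ell^{-s} + \omega\varepsilon_K(\ell)\ell^{1-2s})^{-1}$.

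The main technical obstacle lies in the careful bookkeeping of local epsilon factors and conductors at the bad primes dividing $\mathfrak{f}\Delta_K$, in order to match the level $N_\mathfrak{f}$ exactly and to obtain the correct nebentypus $\omega\varepsilon_K$ (as opposed to some twist of it). In practice this can be circumvented by a direct theta-series argument à la Miyake: write the $q$-series as a finite linear combination of theta series of the rank-two lattice $\mathcal{O}_K\subset\mathbf{C}$ weighted by ray class characters, and apply classical Poisson summation to obtain the modular transformation law by hand. Since the result is standard, I would simply cite Miyake \cite[Theorem $4.8.2$]{miyake2006modular} and the formulation in \cite[Theorem $3.1.1$]{Lei_Loeffler_Zerbes_2015}.
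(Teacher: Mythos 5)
The paper does not give its own proof of this theorem; it is stated with citations to \cite[Theorem $3.1.1$]{Lei_Loeffler_Zerbes_2015} and \cite[Theorem $4.8.2$]{miyake2006modular} and no further argument, so there is no paper-internal proof to compare against. Your proposal ultimately does the same thing (cites these references), and the sketch you give beforehand is the standard Hecke/Weil route and is essentially correct: the conductor-discriminant formula for $\mathrm{Ind}_{G_K}^{G_{\mathbf{Q}}}\psi$ gives level $N_{K/\mathbf{Q}}(\mathfrak{f})|\Delta_K|$, the central character computation pins down the nebentypus $\omega\varepsilon_K$, and the Euler factor bookkeeping at split and inert primes is correct.

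The one point worth sharpening is your justification of cuspidality. Entirety of $L(s,\psi)$ by itself does not rule out Eisenstein behaviour: if $\psi$ factored through $N_{K/\mathbf{Q}}$, say $\psi=\eta\circ N_{K/\mathbf{Q}}$ with $\eta$ a nontrivial Hecke character of $\mathbf{Q}$, then $L(s,\psi)=L(s,\eta)L(s,\eta\varepsilon_K)$ would still be entire while $g_\psi$ would be an Eisenstein series. The correct criterion is $\psi\neq\psi^\sigma$ (with $\sigma$ the nontrivial element of $\mathrm{Gal}(K/\mathbf{Q})$), which is automatic here since $\psi$ has infinity type $(-1,0)$ and $\psi^\sigma$ has infinity type $(0,-1)$. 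Your phrase ``not induced from a Dirichlet character over $\mathbf{Q}$'' is gesturing at the right condition, but the intermediate inference via entirety is a non sequitur as written.
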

Let $\mathfrak{n\subseteq\mathcal{O}}_K$ be an integral ideal with $\mathfrak{f}|\mathfrak{n}$, and let $N_\mathfrak{n}:=N_{K/\mathbf{Q}}(\mathfrak{n})\Delta_K$ which is a multiple of $N_\mathfrak{f}$. Let $H_\mathfrak{n}$ be the ray class group of $K$ of modulus $\mathfrak{n}$. Let $\mathbf{T}^{\GL_2}_{L,N_\mathfrak{n}}$ be the commutative $L$-subalgebra of $\mathbf{C}[U_1(N_\mathfrak{n})\backslash\GL_2(\mathbf{A}_\mathrm{f})/U_1(N_\mathfrak{n})]$ given by the polynomial algebra $L[T_{\mathfrak{n},\ell}':\ell\ \mathrm{prime}][S_{\ell}'^{\pm 1}:\ell\nmid N_\mathfrak{n}\ \mathrm{prime}]$ where 
\begin{align*}
    T_{\mathfrak{n},\ell}'=\begin{dcases}
        \ch(K_1(\ell^e)\left[\begin{smallmatrix}
            \ell^{-1} & \\
            & 1
        \end{smallmatrix}\right] K_1(\ell^e)),\ &\mathrm{if}\ \ell^e\parallel N_\mathfrak{n}\\
        \ch(\GL_2(\mathbf{Z}_\ell)\left[\begin{smallmatrix}
            \ell^{-1} & \\
            & 1
        \end{smallmatrix}\right]\GL_2(\mathbf{Z}_\ell)),\ &\mathrm{if}\ \ell\nmid N_\mathfrak{n}
    \end{dcases},\ \ \ 
    S_{\ell}'=
        \ch(\left[\begin{smallmatrix}
            \ell^{-1} & \\
            & \ell^{-1}
\end{smallmatrix}\right]\GL_2(\mathbf{Z}_\ell)). 
\end{align*} 
We will consider the following algebra homomorphism, as in \cite[\S $3.2$]{Lei_Loeffler_Zerbes_2015}. For $R\in\{L,\mathcal{O}_L\}$, we set
\begin{align*}
    \phi_\mathfrak{n}: \mathbf{T}_{R,N_\mathfrak{n}}^{\GL_2}  &\longrightarrow R[H_\mathfrak{n}]\\
    T_{\mathfrak{n},\ell}'&\mapsto \sum_{\substack{\mathfrak{l}\nmid\mathfrak{n}\\ N_{K/\mathbf{Q}}(\mathfrak{l})=\ell}}[\mathfrak{l}]{\psi(\mathfrak{l})}\\
    S_{\ell}'&\mapsto [(\ell)]{(\omega\varepsilon_K)(\ell)}.
\end{align*}
\noindent It has the property that for any character $\chi$ of $H_\mathfrak{n}$, the evaluation at $\chi$ map composed with $\phi_\mathfrak{n}$, corresponds to the Hecke eigensystem of the eigenform $g_{\psi\chi}$, i.e. the form attached to the twist of $\psi$ by $\chi$ through \Cref{thm miyaki}.
\subsection{Introducing a $\GSp_4$ automorphic representation and its associated Galois representation}\label{sec  GSp4 aut rep} Let $\Pi=\Pi_\infty\otimes\Pi_\mathrm{f}$ be a cuspidal automorphic representation of $\GSp_4$ with $\Pi_\infty$ cohomological unitary discrete series representation of weight $(k_1,k_2)$, $k_1\geq k_2\geq 3$. We set $w:=k_1+k_2-3$, and write $\omega_\Pi$ for the finite-order Hecke character given by the central character of $\Pi.$
\begin{thm}\emph{(\cite[Theorem I]{weissauer2005four})}\label{thm galois rep}
    Let $\Pi$ be as above and further assume that $\Pi$ is of general-type. Let $S$ be a finite set of primes containing all the primes at which $\Pi$ ramifies. Then the following hold:
    \begin{enumerate}
        \item For any prime $\ell\notin S$, the local component $\Pi_\ell$ is an unramified principal-series representation.
        \item Let $\mathcal{P}_\ell^\mathrm{spin}(X)\in\mathcal{H}_{\GSp_4(\mathbf{Q}_\ell)}^\circ[X]$ be the polynomial interpolating local spin $L$-factors as in the previous section. The subfield $L\subseteq\mathbf{C}$ generated by the coefficients of $\Theta_{\Pi_\ell}(\mathcal{P}_\ell^\mathrm{spin})(\ell^{\tfrac{w}{2}}X)$ for all $\ell\notin S$, is a finite extension of $\mathbf{Q}.$
        \item For any prime $p$ and any choice of embedding $L\hookrightarrow\overline{\mathbf{Q}}_p$, there is a semi-simple four-simensional Galois representation $$\rho_{\Pi,p}:\mathrm{Gal}(\overline{\mathbf{Q}}/\mathbf{Q})\rightarrow \GL_4(\overline{\mathbf{Q}}_p)$$ characterised up to isomorphism by the property that for all primes $\ell\notin S\cup\{p\}$, we have
        $$P_\ell(X):=\det\left(1-X\rho_{\Pi,p}(\mathrm{Frob}_\ell^{-1})\right)= \Theta_{\Pi_\ell}(\mathcal{P}_\ell^\mathrm{spin})(\ell^{\tfrac{w}{2}}X).$$
        \item For all isomorphisms $\overline{\mathbf{Q}}_p\simeq\mathbf{C}$ the image of the eigenvalues of $\rho_{\Pi,p}(\mathrm{Frob}_\ell^{-1})$ has absolute value $\ell^{w/2}$ for all $\ell\not\in S\cup\{p\}.$
    \end{enumerate}
\end{thm}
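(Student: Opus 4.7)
The plan is to realise $\rho_{\Pi,p}$ geometrically, as a direct summand of the $p$-adic \'etale intersection cohomology of the Siegel modular threefold $Y_{\GSp_4}$ with coefficients in the local system $\mathscr{V}_\lambda$ attached to the algebraic representation of $\GSp_4$ of highest weight $\lambda$ determined by $(k_1,k_2)$. Since $\Pi_\infty$ is a cohomological discrete-series of the relevant weight, Matsushima's formula (in its $L^2$/intersection-cohomology form, cf.\ Borel--Casselman) guarantees that $\Pi_\mathrm{f}$ contributes non-trivially to this cohomology, so that the $\Pi_\mathrm{f}$-isotypic piece is non-zero and carries a natural commuting Galois action.

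For $(1)$, I would use the hypothesis that $\Pi$ is unramified outside $S$ (so $\Pi_\ell$ has non-zero $\GSp_4(\mathbf{Z}_\ell)$-fixed vectors for $\ell\notin S$), combined with genericity: general-type representations of $\GSp_4$ lie in generic global packets by Arthur/Gan--Takeda, and unramified generic representations of $\GSp_4(\mathbf{Q}_\ell)$ are necessarily full (irreducible) unramified principal series. To extract a \emph{four-dimensional} piece from the $\Pi_\mathrm{f}$-isotypic component, I would next invoke the endoscopic classification of discrete automorphic representations of $\GSp_4$ together with the general-type hypothesis, which rules out CAP and endoscopic contributions; hence $\Pi$ sits in a stable tempered packet and contributes with its correct multiplicity and with no spurious summands. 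Combining this with Zucker's conjecture yields a clean tensor-product decomposition of the $\Pi_\mathrm{f}$-isotypic part of intersection cohomology as $\Pi_\mathrm{f}\otimes \rho_{\Pi,p}$ for a four-dimensional $\rho_{\Pi,p}$, giving $(3)$'s existence and semi-simplicity.

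For the local--global compatibility in $(3)$, I would appeal to Eichler--Shimura--type congruence relations on Siegel threefolds: at a good prime $\ell\notin S\cup\{p\}$, the action of the spherical Hecke algebra on the nearby cycles is intertwined with $\mathrm{Frob}_\ell^{-1}$, and the polynomial relation satisfied by Frobenius is exactly the image under $\Theta_{\Pi_\ell}$ of the Hecke polynomial interpolating spin $L$-factors; this is essentially Chai--Faltings's study of the bad reduction of $Y_{\GSp_4}$, applied to Hecke correspondences. Statement $(2)$ then follows because $L$ is generated by the eigenvalues of the Hecke operators on a finite-dimensional space of cohomological cuspidal forms of fixed level and weight (Harish-Chandra finiteness). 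Finally, $(4)$ is a consequence of Deligne's Weil~II: intersection cohomology of a proper model is pure of weight $w=k_1+k_2-3$, and a standard specialisation argument forces the eigenvalues of $\mathrm{Frob}_\ell^{-1}$ to have complex absolute value $\ell^{w/2}$ under every embedding $\overline{\mathbf{Q}}_p\simeq \mathbf{C}$.

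The hard part is the extraction step in the second paragraph: isolating a genuinely four-dimensional stable piece from the intersection cohomology requires Arthur's classification for $\GSp_4$ together with a careful analysis of endoscopic and CAP contributions (which must vanish for general-type $\Pi$, but proving this rigorously requires the full stable trace formula machinery and, in Weissauer's original approach, a delicate topological trace formula analysis on Siegel threefolds, including the contribution of the non-compact boundary). Once this extraction is in hand, $(3)$ and $(4)$ are essentially standard consequences of the geometric realisation, and $(1)$--$(2)$ are bookkeeping on the automorphic side.
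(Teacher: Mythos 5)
The paper does not prove this statement; it is imported wholesale from Weissauer \cite{weissauer2005four}, and the theorem environment carries only a citation. Your sketch is a reasonable high-level account of how Weissauer's theorem is actually established (realisation in the $\Pi_{\mathrm{f}}$-isotypic part of intersection cohomology of a Siegel threefold via Matsushima and Zucker, elimination of CAP and endoscopic contributions for general-type $\Pi$ via the trace formula, Eichler--Shimura/Chai--Faltings congruences for local--global compatibility at good primes, and Weil II purity for $(4)$), and you correctly identify the stable four-dimensional extraction as the genuinely hard step, so this is the same route as the cited source rather than an alternative one.
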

Let $N_\Pi:=\prod_{\ell\in S}\ell$ and write  $\mathcal{H}_{L,N_\Pi}^{\GSp_4}$ for the $L$-valued spherical Hecke algebra of $\GSp_4$ away from $N_\Pi$. Let
$\Theta_{\Pi^\vee}[\tfrac{w-3}{2}]$ be the spherical Hecke eigensystem of $\Pi^\vee\otimes|\mu(-)|_\mathbf{A}^{(w-3)/2}$ where $\mu$ here denotes the multiplier map. Since all the local spherical Hecke algebras (away from $N_\Pi$) are generated by the Hecke operators $\mathscr{T}_\ell,\mathscr{R}_\ell,\mathscr{S}_\ell$ of \eqref{eq: Hecke ops} (for example \cite[\S $5.1.3$]{pilloni2020higher}), it follows from \Cref{thm galois rep} parts $(2)$ and $(3)$ and the explicit expression of the spin $L$-factors given earlier, that the spherical Hecke eigensystem gives a morphism 
$\Theta_{\Pi^\vee}[\tfrac{w-3}{2}]:\mathcal{H}_{L,N_\Pi}^{\GSp_4}\longrightarrow L.$
\subsection{Quotients of cohomology, norm maps and classes}\label{sec quotients of coh.} We will now slightly change the cohomology groups used. We set 
$$\mathbf{H}_\mathrm{mot}^5(N_\Pi,N_\mathfrak{n}):= H^5_\mathrm{mot}(Y_{\GSp_4}(U^{\GSp_4}(N_\Pi))\times Y_{\GL_2}(U^{\GL_2}(N_\mathfrak{n})),\mathscr{W}_\mathbf{Q}^{a,b,d,*}(3-a-r)).$$
In particular, we get rid of the twist $[-a-r]$, so that $H_\mathrm{mot}^5(N_\Pi,N_\mathfrak{n})=\mathbf{H}_\mathrm{mot}^5(N_\Pi,N_\mathfrak{n})[-a-r].$ The algebra $\mathcal{H}_{L,N_\Pi}^{\GSp_4}\otimes_L \mathbf{T}_{L,N_\mathfrak{n}}^{\GL_2}$ naturally acts on $\mathbf{H}_\mathrm{mot}^5(N_\Pi,N_\mathfrak{n})$.

\begin{defn}\label{def quot of H5}
    We define $\mathbf{H}_\mathrm{mot}^5(\Pi,\psi,\mathfrak{n},L)$ to be 
$$L[H_\mathfrak{n}]\underset{\left(\Theta_{\Pi^\vee}[\tfrac{w-3}{2}]\otimes\phi_\mathfrak{n},\mathcal{H}_{L,N_\Pi}^{\GSp_4}\otimes_L\mathbf{T}_{L,N_\mathfrak{n}}^{\GL_2}\right)}{\bigotimes}\mathbf{H}_\mathrm{mot}^5(N_\Pi,N_\mathfrak{n})_L.$$
    Let $E/L$ be a finite field extension. For any $E$-valued character $\chi$ of $H_\mathfrak{n}$, we write $\mathbf{H}_\mathrm{mot}^5(\Pi,\psi,\mathfrak{n},L)_{E,\chi}$ for the largest quotient of $$\mathbf{H}_\mathrm{mot}^5(\Pi,\psi,\mathfrak{n},L)\otimes_L E=L[H_\mathfrak{n}]\underset{\left(\Theta_{\Pi^\vee}[\tfrac{w-3}{2}]\otimes\phi_\mathfrak{n},\mathcal{H}_{L,N_\Pi}^{\GSp_4}\otimes_L\mathbf{T}_{L,N_\mathfrak{n}}^{\GL_2}\right)}{\bigotimes}\mathbf{H}_\mathrm{mot}^5(N_\Pi,N_\mathfrak{n})_E$$on which $H_\mathfrak{n}$ acts through $\chi.$ 
\end{defn}
\begin{defn}\label{def class Xi'}
   Let $c\in\mathbf{Z}_{>0}$ be coprime to $6N_\Pi N_\mathfrak{f}$. 
   \begin{enumerate}
  \item  We firstly define the class $_c\Xi_\mathrm{mot}'([N_\Pi,N_{\mathfrak{n}}])\in \mathbf{H}_\mathrm{mot}^5(N_\Pi,N_\mathfrak{n})$ to be the class constructed as in \emph{\Cref{def classes Xi_mot}}, but with $\mu=\mathbf{1}$ and $\xi_{\ell,\mu}^{\GL_2}$ of \emph{\Cref{def input at R}} replaced with $${\xi_{\ell,\mathbf{1}}^{\GL_2}}':=\ell^{-1}(\ell-1)^{-1}\ch(\left[\begin{smallmatrix}
       & -1 \\
       \ell^2 & 
   \end{smallmatrix}\right]K_\ell^{\GL_2})$$
   for all primes $\ell.$
   \item  We now define the class $_c\Xi_\mathfrak{n}^{\Pi,\psi}:=1\otimes 
 \ _c\Xi_\mathrm{mot}'([N_\Pi,N_{\mathfrak{n}}])$  in $\mathbf{H}_\mathrm{mot}^5(\Pi,\psi,\mathfrak{n},L)$
   \end{enumerate}
   
\end{defn}
Recall the natural projection map $\mathrm{pr_1}: Y_{\GL_2}(U^{\GL_2}(N_\mathfrak{n}\ell))\rightarrow Y_{\GL_2}(U^{\GL_2}(N_\mathfrak{n}))$ for primes $\ell\nmid N_\mathfrak{n}$. We will introduce two more degeneracy maps which we call $\mathrm{pr}_2$ and $\mathrm{pr}_2'$.
\begin{defn} Let $\mathfrak{n}$ be as above and $\ell\nmid N_\mathfrak{n}$.
\begin{enumerate}
\item We write $$\mathrm{pr}_2:Y_{\GL_2}(U^{\GL_2}(N_\mathfrak{n}\ell))\rightarrow Y_{\GL_2}(\left[\begin{smallmatrix}
    \ell & \\
    & 1
\end{smallmatrix}\right]U^{\GL_2}(N_\mathfrak{n}\ell)\left[\begin{smallmatrix}
    \ell^{-1} & \\
    & 1
\end{smallmatrix}\right])\rightarrow Y_{\GL_2}(U^{\GL_2}(N_\mathfrak{n}))$$
where the first arrow is given by right multiplication by $\left[\begin{smallmatrix}
    \ell^{-1} & \\
    & 1
\end{smallmatrix}\right]\in \GL_2(\mathbf{Q}_\ell)$ and the second arrow corresponds to the inclusion $\left[\begin{smallmatrix}
    \ell & \\
    & 1
\end{smallmatrix}\right]U^{\GL_2}(N_\mathfrak{n}\ell)\left[\begin{smallmatrix}
    \ell^{-1} & \\
    & 1
\end{smallmatrix}\right]\subseteq U^{\GL_2}(N_\mathfrak{n}).$
    \item We write $$\mathrm{pr}_2':Y_{\GL_2}(U^{\GL_2}(N_\mathfrak{n}\ell))\rightarrow Y_{\GL_2}(\left[\begin{smallmatrix}
    1 & \\
    & \ell^{-1}
\end{smallmatrix}\right]U^{\GL_2}(N_\mathfrak{n}\ell)\left[\begin{smallmatrix}
    1 & \\
    & \ell
\end{smallmatrix}\right])\rightarrow Y_{\GL_2}(U^{\GL_2}(N_\mathfrak{n}))$$
where the first arrow is given by right multiplication by $\left[\begin{smallmatrix}
    1 & \\
    & \ell
\end{smallmatrix}\right]\in\GL_2(\mathbf{Q}_\ell)$, and the second arrow corresponds to the inclusion $\left[\begin{smallmatrix}
    1 & \\
    & \ell
\end{smallmatrix}\right]^{-1}U^{\GL_2}(N_\mathfrak{n}\ell)\left[\begin{smallmatrix}
    1 & \\
    & \ell
\end{smallmatrix}\right]\subseteq U^{\GL_2}(N_\mathfrak{n}).$ 
\end{enumerate}
As usual, we denote by $(\mathrm{pr}_2)_*$ and $(\mathrm{pr}_2')_*$ the corresponding pushforward maps $\mathbf{H}_\mathrm{mot}^5(N_\Pi,N_\mathfrak{n}\ell)\rightarrow \mathbf{H}_\mathrm{mot}^5(N_\Pi,N_\mathfrak{n})$.
\end{defn}
It follows by construction that $(\mathrm{pr}_2)_*$ and $(\mathrm{pr}_2')_*$ are related via 
\[\begin{tikzcd}[ampersand replacement=\&]
	{\mathbf{H}_\mathrm{mot}^5(N_\Pi,N_\mathfrak{n}\ell)} \& {\mathbf{H}_\mathrm{mot}^5(N_\Pi,N_\mathfrak{n})} \\
	\& {\mathbf{H}_\mathrm{mot}^5(N_\Pi,N_\mathfrak{n})}
	\arrow["{(\mathrm{pr}_2)_*}", from=1-1, to=1-2]
	\arrow["{(\mathrm{pr}_2')_*}"', from=1-1, to=2-2]
	\arrow["{\cdot S_{\ell}=\mathrm{ch}(\mathrm{diag}(\ell,\ell)\mathrm{GL}_2(\mathbf{Z}_\ell))}"', from=2-2, to=1-2]
\end{tikzcd}\]
\begin{defn}\emph{(\cite[Definition $3.3.1$]{Lei_Loeffler_Zerbes_2015})}\label{def set A and norm map} Let $c\in\mathbf{Z}_{>0}$ be an integer coprime to $6N_\Pi N_\mathfrak{f}$. Write $\mathcal{A}_c$ for the set of integral ideals of $K$ divisible by $\mathfrak{f}$, whose norm is coprime to $c$.
     Let $\mathfrak{n},\mathfrak{nl}\in\mathcal{A}_c$. 
     \begin{enumerate}
     \item If $\ell=\mathfrak{l}\overline{\mathfrak{l}}\nmid N_\mathfrak{n}$ is a split prime. We define formally 
$$\mathcal{N}^{\mathfrak{nl}}_\mathfrak{n}:=[\overline{\mathfrak{l}}]^{-2}\psi(\overline{\mathfrak{l}})^{-2}\left\{1\otimes(\mathrm{pr}_1)_*-\frac{[\mathfrak{l}]\psi(\mathfrak{l})}{\ell}\otimes(\mathrm{pr}_2)_*\right\}.$$
\item If $\mathfrak{l}|\mathfrak{n}$, we define formally 
$$\mathcal{N}^{\mathfrak{nl}}_\mathfrak{n}:=1\otimes (\mathrm{pr}_1)_*.$$
\end{enumerate}
\end{defn}

\begin{rem}Notice the extra normalization factor of $[\overline{\mathfrak{l}}]^{-2}\psi(\overline{\mathfrak{l}})^{-2}$ in our definition of $\mathcal{N}_\mathfrak{n}^{\mathfrak{nl}}$ for split primes, which is not there in \textit{loc.cit}. Here it is convenient to include it, and it's purpose will become apparent soon enough. 
\end{rem}
\begin{lem}
    For $\mathfrak{n},\mathfrak{nl}\in\mathcal{A}_c$ as above. The map $\mathcal{N}^{\mathfrak{ln}}_\mathfrak{n}$ gives rise to a well-defined morphism 
    $$\mathcal{N}^{\mathfrak{nl}}_\mathfrak{n}:\mathbf{H}_\mathrm{mot}^5(\Pi,\psi,\mathfrak{nl},L)\rightarrow \mathbf{H}_\mathrm{mot}^5(\Pi,\psi,\mathfrak{n},L).$$
    \begin{proof}
    This is essentially an adelic formulation of (part of) \cite[Proposition $3.3.2$]{Lei_Loeffler_Zerbes_2015} in our setup. However, this once, we will give the details for completeness. We do this for $\mathfrak{l}\nmid\mathfrak{n}$ since otherwise it is clear. Away from the prime $\mathfrak{l}$ this follows formally by construction. At the prime $\mathfrak{l}$, it suffices to show that $\mathcal{N}_\mathfrak{n}^\mathfrak{nl}(1\otimes (T_{\mathfrak{nl},\ell}'\cdot x))=\mathcal{N}_\mathfrak{n}^\mathfrak{nl}(\phi_\mathfrak{nl}(T_{\mathfrak{nl},\ell}')\otimes x)$ as elements of $\mathbf{H}_\mathrm{mot}^5(\Pi,\psi,\mathfrak{n},L)$, where $x$ denotes an arbitrary element of $\mathbf{H}_\mathrm{mot}^5(\Pi,\psi,\mathfrak{nl},L)$, and $\ell$ is the prime below $\mathfrak{l}$. For ease of notation, we will set $K:=\GL_2(\mathbf{Z}_\ell)$ and $t_\ell:=\left[\begin{smallmatrix}
            1 & \\
             & \ell
        \end{smallmatrix}\right]$ for the rest of the proof. We start by noting that since $\ell\parallel N_\mathfrak{nl}$, 
    \begin{align}\label{eq:11}
        (\mathrm{pr}_1)_*\cdot T_{\mathfrak{nl},\ell}'\cdot x=S_\ell'\cdot (\mathrm{pr}_1)_*\cdot T_{\mathfrak{nl},\ell}\cdot x&=S_\ell'\cdot\sum_{k_1\in K/K_1(\ell)}k_1\cdot \sum_{k_2\in K_1(\ell)t_\ell K_1(\ell)/K_1(\ell)}k_2\cdot x\\
   \nonumber     &=S_\ell'\cdot\sum_{k_1\in K/K_1(\ell)}k_1\cdot\sum_{k_2\in K_1(\ell)/K_1(\ell)\cap t_\ell K_1(\ell) t_\ell^{-1}} k_2\cdot t_\ell\cdot x\\
     \nonumber   &=S_\ell'\cdot\sum_{k\in K/K_1(\ell)\cap t_\ell K_1(\ell) t_\ell^{-1}}k\cdot t_\ell\cdot x.
    \end{align}
    On the other hand since $\ell\nmid N_\mathfrak{n}$, we have 
    \begin{align}\label{eq: 12}
    T_{\mathfrak{n},\ell}'\cdot(\mathrm{pr}_1)_*\cdot x=S_\ell'\cdot T_{\mathfrak{n},\ell}\cdot(\mathrm{pr}_1)_*\cdot x&=S_\ell'\cdot\sum_{k_1\in Kt_\ell K/K}k_1\cdot \sum_{k_2\in K/K_1(\ell)} k_2\cdot x\\
    \nonumber &=S_\ell'\cdot \sum_{k_1\in K/K\cap t_\ell K t_\ell^{-1}}k_1 \cdot t_\ell\cdot\sum_{k_2\in K/K_1(\ell)} k_2\cdot x\\
    \nonumber &=S_\ell'\cdot\sum_{k_1\in K/K\cap t_\ell K t_\ell^{-1}} k_1 \cdot \sum_{k_2\in t_\ell Kt_\ell^{-1}/t_\ell K_1(\ell) t_\ell^{-1}} k_2\cdot t_\ell \cdot x.
    \end{align}
    One easily checks that the natural map 
    $$\frac{t_\ell K t_\ell^{-1} \cap K}{t_\ell K_1(\ell) t_\ell^{-1}\cap K_1(\ell)}\hookrightarrow  \frac{t_\ell K t_\ell^{-1}}{t_\ell K_1(\ell) t_\ell^{-1}}$$
    is a strict injection and a complete set of distinct coset representatives of the righthand side which do not lie in the image of this map, is given by $\{t_\ell \left[\begin{smallmatrix}
        & u\\
        1 &
    \end{smallmatrix}\right]t_\ell^{-1}\ |\ u\in (\mathbf{Z}/\ell \mathbf{Z})^\times\}$.
    Combining this with \eqref{eq: 12}, \eqref{eq:11}, the fact that $K\cap t_\ell K t_\ell^{-1}=K_0(\ell)$, and fixing a complete list of distinct coset representatives $K/K_0(\ell)=\{
        \left[\begin{smallmatrix}
            1 & \\
            v & 1
        \end{smallmatrix}\right],\left[\begin{smallmatrix}
            & 1\\
            1 &
        \end{smallmatrix}\right]\ |\ v\in \mathbf{Z}/\ell \mathbf{Z}\}$, we have 
    \begin{align}\label{eq:13}
        T_{\mathfrak{n},\ell}'\cdot (\mathrm{pr}_1)_*\cdot x&=\left\{S_\ell'\cdot\sum_{k\in K/K_1(\ell)\cap t_\ell K_1(\ell) t_\ell^{-1}}k_1\cdot t_\ell \cdot x\right\}+\left\{S_\ell'\cdot\sum_{k_2\in K/K_0(\ell)}k_2\cdot \sum_{u\in (\mathbf{Z}/\ell\mathbf{Z})^\times} \left[\begin{smallmatrix}
            & u\\
            1 &
        \end{smallmatrix}\right]\left[\begin{smallmatrix}
            \ell & \\
            & 1
        \end{smallmatrix}\right]\cdot x\right\}\\
        \nonumber  &=\left\{(\mathrm{pr}_1)_*\cdot T_{\mathfrak{nl},\ell}'\cdot x\right\} + \left\{ S_{\ell}'\cdot (\mathrm{pr}_2)_*\cdot x\right\}
    \end{align}
    where the second equality follows from the fact that the $k_2\left[\begin{smallmatrix}
         & u\\
         1 & 
    \end{smallmatrix}\right]$ form a complete set of distinct coset representatives for $K/t_\ell^{-1} K_1(\ell) t_\ell$, and the definition of the map $\mathrm{pr_2}.$ Thus, all in all we have that $(\mathrm{pr}_1)_*\cdot T_{\mathfrak{nl},\ell}'\cdot x= \{T_{\mathfrak{n},\ell}'\cdot (\mathrm{pr}_1)_*\cdot x\}-\{S_{\ell}'\cdot (\mathrm{pr}_2)_*\cdot x\}.$ We now consider the second term in the definition of $\mathcal{N}^\mathfrak{nl}_\mathfrak{n}$. We have 
    \begin{align}\label{eq: 14}
        (\mathrm{pr}_2)_*\cdot T_{\mathfrak{nl},\ell}'\cdot x&=S_\ell'\cdot\sum_{k_1\in K/t_\ell^{-1} K_1(\ell)t_\ell}k_1\cdot \left[\begin{smallmatrix}
            \ell & \\
            & 1
        \end{smallmatrix}\right]\cdot \sum_{k_2\in K_1(\ell)t_\ell K_1(\ell)/ K_1(\ell) }k_2\cdot x\\
        \nonumber &=\sum_{k\in K/K_1(\ell)\cap t_\ell^{-1}K_1(\ell)t_\ell}k\cdot x\\
        \nonumber&=[K_1(\ell):K_1(\ell) \cap t_\ell^{-1}K_1(\ell)t_\ell]\sum_{k\in K/K_1(\ell)} k\cdot x\\
        \nonumber &=\ell(\mathrm{pr}_1)_*\cdot x
    \end{align}
    where the third equality follows from the fact that $x$ is already $K_1(\ell)$-invariant by definition, and the fourth equality follows from a simple calculation of the index and the definition of $\mathrm{pr}_1$. Putting \eqref{eq:13} and \eqref{eq: 14} together, while omitting the inconsequential factor of $[\overline{\mathfrak{l}}]^{-2}\psi(\overline{\mathfrak{l}})^{-2}$, we have 
    \begin{align}\label{eq: 15a}
\mathcal{N}^\mathfrak{nl}_\mathfrak{n}(1\otimes (T_{\mathfrak{nl},\ell}'\cdot x))&=\left(\phi_\mathfrak{n}(T_{\mathfrak{n},\ell}')-[{\mathfrak{l}}]\psi({\mathfrak{l}})\right)\otimes(\mathrm{pr}_1)_*\cdot x-\phi_\mathfrak{n}(S_{\ell}')\otimes(\mathrm{pr}_2)_*\cdot x
    \end{align}
    and we wish to verify that this is coincides with $\mathcal{N}^\mathfrak{nl}_\mathfrak{n}(\phi_\mathfrak{nl}(T_{\mathfrak{nl},\ell}')\otimes x)$. But $\phi_\mathfrak{n}(T_{\mathfrak{n},\ell}')=[\mathfrak{l}]\psi(\mathfrak{l})+[\overline{\mathfrak{l}}]\psi(\overline{\mathfrak{l}})$, $\phi_\mathfrak{n}(S_{\ell}')=[(\ell)](\nu \varepsilon_K)(\ell)=[\mathfrak{l}\overline{\mathfrak{l}}]\psi(\mathfrak{l}\overline{\mathfrak{l}})$ and $\phi_\mathfrak{nl}(T_{\mathfrak{nl},\ell}')=[\overline{\mathfrak{l}}]\psi(\overline{\mathfrak{l}}).$ The result now follows from \eqref{eq: 15a}.
\end{proof}
\end{lem}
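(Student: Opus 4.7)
My plan is to reduce the well-definedness check to a local calculation at the single rational prime $\ell$ below $\mathfrak{l}$, and then carry out that calculation by unpacking the Hecke operators via explicit coset decompositions at $\GL_2(\mathbf{Q}_\ell)$.

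Away from $\ell$, the levels $N_\mathfrak{n}$ and $N_\mathfrak{nl}$ agree, so $\phi_\mathfrak{n}$ and $\phi_\mathfrak{nl}$ agree on those Hecke operators and both pushforwards $(\mathrm{pr}_1)_*,(\mathrm{pr}_2)_*$ are Hecke-equivariant; compatibility with the tensor-product relation is then automatic. The case $\mathfrak{l}\mid\mathfrak{n}$ is immediate because $\mathcal{N}_\mathfrak{n}^\mathfrak{nl}$ is just $1\otimes(\mathrm{pr}_1)_*$. In the remaining split case $\ell=\mathfrak{l}\overline{\mathfrak{l}}\nmid N_\mathfrak{n}$, the problem reduces to showing that for every $x\in\mathbf{H}_\mathrm{mot}^5(N_\Pi,N_\mathfrak{nl})_L$,
$$\mathcal{N}_\mathfrak{n}^\mathfrak{nl}(1\otimes T_{\mathfrak{nl},\ell}'\cdot x)=\mathcal{N}_\mathfrak{n}^\mathfrak{nl}(\phi_\mathfrak{nl}(T_{\mathfrak{nl},\ell}')\otimes x),$$
together with the trivial analogue for $S_\ell'$.

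The heart of the argument will be two operator identities on cohomology that relate the Hecke action at level $\mathfrak{nl}$ to the Hecke action at level $\mathfrak{n}$ through the degeneracy maps. Writing $K=\GL_2(\mathbf{Z}_\ell)$ and $t_\ell=\mathrm{diag}(1,\ell)$, I expect
\begin{align*}
(\mathrm{pr}_1)_*\circ T_{\mathfrak{nl},\ell}'&=T_{\mathfrak{n},\ell}'\circ(\mathrm{pr}_1)_*-S_\ell'\circ(\mathrm{pr}_2)_*,\\
(\mathrm{pr}_2)_*\circ T_{\mathfrak{nl},\ell}'&=\ell\cdot(\mathrm{pr}_1)_*,
\end{align*}
both obtained by decomposing the double coset $K_1(\ell)t_\ell K_1(\ell)$ and exploiting the $K_1(\ell)$-invariance of $x$. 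These are adelic counterparts of the classical relations between degeneracy maps and oldforms on modular curves. Substituting them into the definition of $\mathcal{N}_\mathfrak{n}^\mathfrak{nl}$ and simplifying, the left-hand side becomes, up to the normalization $[\overline{\mathfrak{l}}]^{-2}\psi(\overline{\mathfrak{l}})^{-2}$,
$$(\phi_\mathfrak{n}(T_{\mathfrak{n},\ell}')-[\mathfrak{l}]\psi(\mathfrak{l}))\otimes(\mathrm{pr}_1)_*x-\phi_\mathfrak{n}(S_\ell')\otimes(\mathrm{pr}_2)_*x,$$
while the right-hand side follows directly from pulling the scalar $\phi_\mathfrak{nl}(T_{\mathfrak{nl},\ell}')$ through $\mathcal{N}_\mathfrak{n}^\mathfrak{nl}$. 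Matching the two reduces to the numerical identities $\phi_\mathfrak{n}(T_{\mathfrak{n},\ell}')=[\mathfrak{l}]\psi(\mathfrak{l})+[\overline{\mathfrak{l}}]\psi(\overline{\mathfrak{l}})$, $\phi_\mathfrak{n}(S_\ell')=[\mathfrak{l}\overline{\mathfrak{l}}]\psi(\mathfrak{l}\overline{\mathfrak{l}})$ and $\phi_\mathfrak{nl}(T_{\mathfrak{nl},\ell}')=[\overline{\mathfrak{l}}]\psi(\overline{\mathfrak{l}})$; the normalization factor in $\mathcal{N}_\mathfrak{n}^\mathfrak{nl}$ is precisely tuned to make this comparison balance.

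The main obstacle will be the coset bookkeeping underlying the first operator identity. One must enumerate the cosets $K/(K_1(\ell)\cap t_\ell K_1(\ell)t_\ell^{-1})$ alongside $K/(K\cap t_\ell K t_\ell^{-1})=K/K_0(\ell)$, and then isolate the ``excess'' representatives (parametrized by $(\mathbf{Z}/\ell\mathbf{Z})^\times$) sitting in the larger quotient but not in the image of the natural injection. These excess classes are exactly what produce the $S_\ell'\circ(\mathrm{pr}_2)_*$ correction term, and keeping track of the diamond action while identifying them is the delicate technical point.
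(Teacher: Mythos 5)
Your proposal matches the paper's proof essentially exactly: both reduce to the prime $\ell$, establish the same two operator identities $(\mathrm{pr}_1)_*\circ T'_{\mathfrak{nl},\ell}=T'_{\mathfrak{n},\ell}\circ(\mathrm{pr}_1)_*-S'_\ell\circ(\mathrm{pr}_2)_*$ and $(\mathrm{pr}_2)_*\circ T'_{\mathfrak{nl},\ell}=\ell\cdot(\mathrm{pr}_1)_*$ by the same coset decompositions (with the ``excess'' representatives indexed by $(\mathbf{Z}/\ell\mathbf{Z})^\times$ producing the correction term), and close by matching against $\phi_\mathfrak{n}(T'_{\mathfrak{n},\ell})$, $\phi_\mathfrak{n}(S'_\ell)$, $\phi_\mathfrak{nl}(T'_{\mathfrak{nl},\ell})$. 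One tiny remark: there is no separate $S'_\ell$ compatibility to check on the $\mathfrak{nl}$ side, since $S'_\ell$ is not a generator of $\mathbf{T}^{\GL_2}_{L,N_\mathfrak{nl}}$ when $\ell\mid N_\mathfrak{nl}$, so that clause in your plan is vacuous.
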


\begin{thm}\label{thm 4.3.4}
  Let $\mathfrak{n},\mathfrak{nl}\in\mathcal{A}_c$ with $\ell=\mathfrak{l}\overline{\mathfrak{l}}\nmid N_\mathfrak{n} N_\Pi$ a split prime. Then, for any finite field extension $E/L$ and any $E$-valued character $\chi$ of $H_\mathfrak{n}$, we have 
    \begin{align}\label{eq: 11}
        \mathcal{N}^{\mathfrak{nl}}_\mathfrak{n}\left(\ _c\Xi_{\mathfrak{nl}}^{\Pi,\psi}\right)=P_\ell\left([\mathfrak{l}]\psi(\mathfrak{l})\ell^{-2-a-r}\right)\cdot\ _c\Xi_{\mathfrak{n}}^{\Pi,\psi}
    \end{align}
    as elements of $\mathbf{H}_\mathrm{mot}^5(\Pi,\psi,\mathfrak{n},L)_{E,\chi}.$
\end{thm}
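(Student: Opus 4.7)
The plan is to reduce the assertion to a purely local computation at the split prime $\ell$ and then assemble the two degeneracy pushforwards $(\mathrm{pr}_1)_*$ and $(\mathrm{pr}_2)_*$ into a single degree-four factor via the defining formula for $\mathcal{N}_\mathfrak{n}^\mathfrak{nl}$. As a first step, I would observe that the local input data defining $_c\Xi_\mathfrak{nl}^{\Pi,\psi}$ and $_c\Xi_\mathfrak{n}^{\Pi,\psi}$ coincide away from $\ell$, and that the maps $(\mathrm{pr}_i)_*$ affect only the $\GL_2$-factor at $\ell$. Arguing exactly as in the reduction step of \Cref{thm gl_2 norm-relations} (using cyclicity of $\mathcal{I}(G(\mathbf{Q}_\ell)/G(\mathbf{Z}_\ell))$ and the commutative diagram of local symbol maps), it suffices to evaluate the period $\mathcal{Z}$ of \Cref{lem linear form} on the traces $\mathrm{Tr}((\mathrm{pr}_i)_*\delta_\ell)$ to $G(\mathbf{Z}_\ell)$-level, where $\delta_\ell$ denotes the local input of $_c\Xi_\mathrm{mot}'([N_\Pi,N_\mathfrak{nl}])$ at $\ell$. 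The relevant local representation governing the quotient is $\Pi_\ell^\vee\otimes|\mu|^{(w-3)/2}$ tensored with the unramified principal series $\pi_\ell$ whose Satake parameters are the local components of $\chi\psi$ at $\mathfrak{l}$ and $\overline{\mathfrak{l}}$, coming from $g_{\psi\chi}$ via \Cref{thm miyaki}.

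Next, I would carry out the two local zeta-integral computations. For $(\mathrm{pr}_1)_*$, the calculation is essentially a reprise of the unfolding performed in the proof of \Cref{thm gl_2 norm-relations}: pass from Novodvorsky's to Piatetski-Shapiro's integral via the basic formula \eqref{eq: basic formula}, then unfold against the Bessel model on the sublattice $U_1(\ell^2)\times U_1(\ell^2)\subseteq\mathrm{SL}_2(\mathbf{Z}_\ell)^2$. The outcome, via \Cref{lem Nov-poly}, is that modulo the Hecke eigensystem $(\mathrm{pr}_1)_*$ acts on the class by $\mathcal{P}_\ell^{\mathrm{spin}}{}'$ evaluated at $\chi([\mathfrak{l}])\psi(\mathfrak{l})$, up to an explicit rational normalisation. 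For $(\mathrm{pr}_2)_*$, the most efficient route is to exploit the trace identity $(\mathrm{pr}_2)_*\cdot T_{\mathfrak{nl},\ell}' = \ell\,(\mathrm{pr}_1)_*$ established in equation \eqref{eq: 14} of the preceding lemma: after pushing to the level-$\mathfrak{n}$ quotient and combining with the identity \eqref{eq: 13}, this expresses $(\mathrm{pr}_2)_*\, _c\Xi_\mathrm{mot}'([N_\Pi,N_\mathfrak{nl}])$ as an explicit $\mathbf{T}_{L,N_\mathfrak{n}}^{\GL_2}$-linear combination of $(\mathrm{pr}_1)_*\, _c\Xi_\mathrm{mot}'([N_\Pi,N_\mathfrak{nl}])$ without requiring a second Bessel unfolding.

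Substituting both computations into the explicit formula for $\mathcal{N}_\mathfrak{n}^\mathfrak{nl}$, the normalisation $[\overline{\mathfrak{l}}]^{-2}\psi(\overline{\mathfrak{l}})^{-2}$ introduced in \Cref{def set A and norm map} is precisely what converts the naturally arising polynomial in $\chi([\overline{\mathfrak{l}}])\psi(\overline{\mathfrak{l}})$ into one in $[\mathfrak{l}]\psi(\mathfrak{l})$. Finally, invoking \Cref{thm galois rep}(3) together with the explicit Satake expansion \eqref{eq: 3} of $\mathcal{P}_\ell^{\mathrm{spin}}$, the expression $\Theta_{\Pi^\vee}[\tfrac{w-3}{2}](\mathcal{P}_\ell^\mathrm{spin}{}')$ is identified with $P_\ell$ after renormalising the argument by a controlled power of $\ell$; combining this with the multiplier twist $|\mu(-)|^{a+r}$ that separates $\mathbf{H}_\mathrm{mot}^5$ from its $[-a-r]$-twisted variant $H_\mathrm{mot}^5$, the argument of the final polynomial becomes $[\mathfrak{l}]\psi(\mathfrak{l})\ell^{-2-a-r}$, as claimed.

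The main obstacle will be the careful bookkeeping of normalisations. Three distinct sources contribute to the final shift $\ell^{-2-a-r}$ (the $(w-3)/2$ shift in $\Theta_{\Pi^\vee}$, the $w/2$ shift between $\mathcal{P}_\ell^{\mathrm{spin}}$ and $P_\ell$ from \Cref{thm galois rep}(3), and the $-a-r$ multiplier twist distinguishing $\mathbf{H}_\mathrm{mot}^5$ from $H_\mathrm{mot}^5$), and the factor $[\overline{\mathfrak{l}}]^{-2}\psi(\overline{\mathfrak{l}})^{-2}$ from \Cref{def set A and norm map} must conspire with all of them to produce the clean answer; verifying that these combine correctly, while simultaneously tracking the integrality/rationality factors absorbed into each intermediate step, is the most delicate part of the argument.
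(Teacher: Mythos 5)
The central strategy here does not work. You propose to compute $(\mathrm{pr}_1)_*\,{}_c\Xi_\mathrm{mot}'([N_\Pi,N_\mathfrak{nl}])$ in isolation via the basic formula \eqref{eq: basic formula} and the Bessel unfolding, and then obtain $(\mathrm{pr}_2)_*$ from the trace identities of the well-definedness lemma. But the Bessel unfolding in the proof of \Cref{thm gl_2 norm-relations} depends crucially on the fact that the \emph{two-term} local $\GL_2$-data $\xi_{\ell_0,\mu}^{\GL_2}$, once traced down and applied to $W_\pi^\mathrm{sph}$, reconstitutes the Siegel-section Whittaker function $W^{\phi_{\ell_0}}(\nu,\mu)$; only then is \eqref{eq: basic formula} applicable and the degree-eight Novodvorsky factor collapses to the degree-four spin factor. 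The class $_c\Xi_\mathrm{mot}'([N_\Pi,N_\mathfrak{nl}])$ is built from the \emph{one-term} data ${\xi_{\ell,\mathbf{1}}^{\GL_2}}'$ of \Cref{def class Xi'}, and $(\mathrm{pr}_1)_*$ applied to it alone does \emph{not} produce the Siegel-section vector, so neither the basic formula nor the Bessel unfolding gives a single degree-four factor $\mathcal{P}_\ell^\mathrm{spin}{}'$ evaluated at a point, as you assert. Likewise, the trace identity \eqref{eq: 14} relates $(\mathrm{pr}_2)_*(T_{\mathfrak{nl},\ell}'\cdot x)$ to $(\mathrm{pr}_1)_*(x)$ — not $(\mathrm{pr}_2)_*(x)$ itself — and since $(\mathrm{pr}_1)_*$ and $(\mathrm{pr}_2)_*$ do not individually descend to the quotient, one cannot simply replace $T_{\mathfrak{nl},\ell}'$ by its eigenvalue before pushing forward; you would still be left computing $(\mathrm{pr}_1)_*$ of a new class $T_{\mathfrak{nl},\ell}'\cdot x$ whose local data is not a Hecke multiple of the original.

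The key idea you are missing is a repackaging step that sidesteps both difficulties at once. In the paper's proof, after unwinding the definition of $\mathcal{N}_\mathfrak{n}^\mathfrak{nl}$ and using $(\mathrm{pr}_2)_* = S_\ell\cdot(\mathrm{pr}_2')_*$, one recognises that in the quotient $\mathbf{H}_\mathrm{mot}^5(\Pi,\psi,\mathfrak{n},L)_{E,\chi}$ the operator $(\chi\psi)(\mathfrak{l})\ell^{-1}S_\ell$ simplifies to $\ell^{-1/2}\mu_\ell(\ell)^{-1}$ with $\mu_\ell(\ell):=\ell^{-1/2}(\chi\psi)(\overline{\mathfrak{l}})$, and that the entire linear combination in the definition of $\mathcal{N}_\mathfrak{n}^\mathfrak{nl}$ applied to the \emph{one-term} data reconstructs precisely $(\mathrm{pr}_1)_*$ of the \emph{two-term}, $\mu_\ell$-twisted class $_c\Xi_\mathrm{mot}(\mu_\ell[N_\Pi,N_\mathfrak{nl}])$ from \Cref{def input at R}. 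This is exactly the situation to which \Cref{thm gl_2 norm-relations} applies, with $\nu_\ell(\ell):=\ell^{-1/2}(\chi\psi)(\mathfrak{l})$, so that theorem is invoked as a black box (supplying the $\mathcal{P}_\ell^\mathrm{spin}{}'(\nu_\ell(\ell)\cdot\ell^{-a-r}\cdot-)$ factor), and the remaining work is the bookkeeping of twists you correctly flag as delicate. Without the repackaging observation, the two pushforwards do not admit clean separate zeta-integral evaluations, and the argument as you have outlined it cannot be completed.
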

\begin{proof}
    By definition, we have 
    \begin{align}
\scalemath{0.95}{\mathcal{N}^{\mathfrak{nl}}_\mathfrak{n}\left(_c\Xi_{\mathfrak{nl}}^{\Pi,\psi}\right)=[\overline{\mathfrak{l}}]^{-2}\psi(\overline{\mathfrak{l}})^{-2}\left\{ 1\otimes(\mathrm{pr}_1)_*\left(\ _c\Xi_\mathrm{mot}'([N_\Pi,N_\mathfrak{nl}])\right) \right\}-\left\{ [\mathfrak{l}]\psi(\mathfrak{l})\ell^{-1}\otimes (\mathrm{pr}_2)_*\left(\ _c\Xi_\mathrm{mot}'([N_\Pi,N_\mathfrak{nl}])\right)\right\} } 
        \end{align}
        where the class $_c\Xi_\mathrm{mot}'([N_\Pi,N_\mathfrak{nl}])$ is given by \Cref{def class Xi'}. Thus, in the quotient $\mathbf{H}_\mathrm{mot}^5(\Pi,\psi,\mathfrak{n},L)_{E,\chi}$, we have 
        \begin{align}\label{eq: 13}        \mathcal{N}^{\mathfrak{nl}}_\mathfrak{n}\left(_c\Xi_{\mathfrak{nl}}^{\Pi,\psi}\right)=1\otimes(\chi\psi)(\overline{\mathfrak{l}})^{-2}\left\{(\mathrm{pr}_1)_*\left(\ _c\Xi_\mathrm{mot}'([N_\Pi,N_\mathfrak{nl}])\right)-(\chi\psi)(\mathfrak{l})\ell^{-1}(\mathrm{pr}_2)_*\left(\ _c\Xi_\mathrm{mot}'([N_\Pi,N_\mathfrak{nl}])\right)\right\}.
        \end{align}As in the proof of \Cref{thm gl_2 norm-relations}, after fixing the local input data away from $\ell$, we have 
        \begin{align*}(\mathrm{pr}_1)_*\left(\ _c\Xi_\mathrm{mot}'([N_\Pi,N_\mathfrak{nl}])\right)&=\mathrm{Symbl}^{[a,b,d,r],\mathrm{sph}}_\ell\left(C_\ell\phi_{\ell,2}\otimes\left\{\xi_\ell^{\GSp_4}\otimes \mathrm{Tr}^{K_1(\ell)}_{\GL_2(\mathbf{Z}_\ell)}{\xi_{\ell,\mathbf{1}}^{\GL_2}}'\right\}\right)\\
        &=\mathrm{Symbl}^{[a,b,d,r],\mathrm{sph}}_\ell\left(C_\ell\phi_{\ell,2}\otimes\left\{\xi_\ell^{\GSp_4}\otimes \ell^{-1}(\ell-1)^{-1}\ch(\left[\begin{smallmatrix}
           & -1\\
           \ell^2 & 
       \end{smallmatrix}\right]\GL_2(\mathbf{Z}_\ell))\right\}\right).
        \end{align*}
         Recall the relation $(\mathrm{pr}_2)_*=S_\ell\cdot(\mathrm{pr}_2')_*$. The pushforward along the first map in the definition of $(\mathrm{pr}_2')_*$, corresponds to the action of $\left[\begin{smallmatrix}
             1 & \\
             & \ell^{-1}
         \end{smallmatrix}\right]$ on cohomology (see the proof of \cite[Proposition $6.1.5$]{grossi2020norm}). Thus, after once again fixing the data away from $\ell$, and using a similar commutative diagram as before, we have 
        $$(\mathrm{pr}_2)_*\left(\ _c\Xi_\mathrm{mot}'([N_\Pi,N_\mathfrak{nl}])\right)=S_\ell\cdot\mathrm{Symbl}^{[a,b,d,r],\mathrm{sph}}_\ell\left(C_\ell\phi_{\ell,2}\otimes\left\{\xi_\ell^{\GSp_4}\otimes \ell^{-1}(\ell-1)^{-1}\ch(\left[\begin{smallmatrix}
           & -\ell\\
           \ell^2 & 
\end{smallmatrix}\right]\GL_2(\mathbf{Z}_\ell))\right\}\right).$$ 
However in the quotient, $(\chi\psi)(\mathfrak{l})\ell^{-1}S_\ell$ is the same as $(\chi\psi)(\mathfrak{l})\ell^{-1}\omega(\ell)^{-1}\chi(\ell)^{-1}=\chi(\overline{\mathfrak{l}})^{-1}\psi(\overline{\mathfrak{l}})^{-1}=\ell^{-1/2}(\chi(\overline{\mathfrak{l}})\cdot \ell^{-1/2}\psi(\overline{\mathfrak{l}}))^{-1}.$
Combining these two facts with \Cref{def input at R}, we see that \eqref{eq: 13} is given in $\mathbf{H}_\mathrm{mot}^5(\Pi,\psi,\mathfrak{n},L)_{E,\chi}$, by
\begin{align}
    1\otimes(\mathrm{pr}_1)_*\left( _c\Xi_\mathrm{mot}(\mu_{{\ell}}[N_\Pi,N_\mathfrak{nl}] \right) 
\end{align}
where now $\ell$ acts as the distinguished prime $\ell_0$ in \Cref{def input at R}, and $\mu_\ell(\ell):=\ell^{-1/2}(\chi\psi)(\overline{\mathfrak{l}}).$ Naturally, we set $\nu_\ell(\ell):=\ell^{-1/2}(\chi\psi)(\mathfrak{l})$ and consider the unramified principal-series $I(\nu_\ell,\mu_\ell)$. Note that the spherical Hecke eigenvalues of $I(\nu_\ell,\mu_\ell)$ and $\mathcal{P}_\ell^\mathrm{spin}(\nu_\ell(\ell))$ are now both defined over $E$. Thus, by \Cref{thm gl_2 norm-relations}, we have that (notice the factor of $\ell^{-a-r}$ which pops up inside the brackets since we got rid of the twist $[-a-r]$ on the cohomology)
\begin{align}
    (\mathrm{pr}_1)_*\left( _c\Xi_\mathrm{mot}(\mu_{{\ell}}[N_\Pi,N_\mathfrak{nl}] \right)={\mathcal{P}_\ell^\mathrm{spin}}'(\ell^{-1/2}\ell^{-a-r}(\chi\psi)(\mathfrak{l}))\cdot\ _c\Xi_\mathrm{mot}([N_\Pi,N_\mathfrak{n}])
\end{align}
in $$\left[\mathbf{H}_\mathrm{mot}^5(N_\Pi,N_\mathfrak{n})_E\right]_{\substack{T_{\mathfrak{n},\ell}'=a_\ell\\
S_\ell'=b_\ell}},\ \ a_\ell:=(\chi\psi)(\mathfrak{l})+(\chi\psi)(\overline{\mathfrak{l}}),\ b_\ell:=(\chi\nu)(\ell).$$
Moreover, it follows by construction that $1\otimes (T_{\mathfrak{n},\ell}'-a_\ell)=1\otimes(S_\ell'-b_\ell)=0$ in $\mathbf{H}_\mathrm{mot}^5(\Pi,\psi,\mathfrak{n},L)_{E,\chi}$. Thus, also using \Cref{thm galois rep}, we see that \begin{align*}\mathcal{N}^{\mathfrak{nl}}_\mathfrak{n}\left(_c\Xi_{\mathfrak{nl}}^{\Pi,\psi}\right)&=1\otimes\left\{{\mathcal{P}_\ell^\mathrm{spin}}'(\ell^{-1/2}\ell^{-a-r}(\chi\psi)(\mathfrak{l}))\cdot \ _c\Xi_\mathrm{mot}'([N_\Pi,N_\mathfrak{n}])\right\}\\&=P_\ell([\mathfrak{l}]\psi(\mathfrak{l})\ell^{-2-a-r})\cdot (1\otimes _c\Xi_\mathrm{mot}'([N_\Pi,N_\mathfrak{n}]))\\
&=P_\ell([\mathfrak{l}]\psi(\mathfrak{l})\ell^{-2-a-r})\cdot\ _c\Xi_\mathfrak{n}^{\Pi,\psi}\end{align*}in $\mathbf{H}_\mathrm{mot}^5(\Pi,\psi,\mathfrak{n},L)_{E,\chi}$, concluding the proof.
\end{proof}
\begin{cor}\label{cor norm-relations over ray class groups}
    Let $\mathfrak{n},\mathfrak{nl}\in\mathcal{A}_c$ with $\ell=\mathfrak{l}\overline{\mathfrak{l}}\nmid N_\mathfrak{n} N_\Pi$ a split prime. Then, 
    \begin{align}\label{eq: 16}\mathcal{N}^{\mathfrak{nl}}_\mathfrak{n}\left(\ _c\Xi_{\mathfrak{nl}}^{\Pi,\psi}\right)=P_\ell\left([\mathfrak{l}]\psi(\mathfrak{l})\ell^{-2-a-r}\right)\cdot\ _c\Xi_{\mathfrak{n}}^{\Pi,\psi}\end{align}
    as elements of $\mathbf{H}_\mathrm{mot}^5(\Pi,\psi,\mathfrak{n},L)$.
\end{cor}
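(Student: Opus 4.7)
The plan is to deduce the corollary from \Cref{thm 4.3.4} by a standard semisimplicity argument exploiting that $H_\mathfrak{n}$ is a finite abelian group. The theorem gives the desired identity in every $\chi$-quotient $\mathbf{H}_\mathrm{mot}^5(\Pi,\psi,\mathfrak{n},L)_{E,\chi}$, and the task is to lift this family of equalities to a single equality in $\mathbf{H}_\mathrm{mot}^5(\Pi,\psi,\mathfrak{n},L)$ itself. Both sides of \eqref{eq: 16} are honest elements of this $L[H_\mathfrak{n}]$-module (the map $\mathcal{N}^{\mathfrak{nl}}_\mathfrak{n}$ is defined without reference to any character, and the polynomial $P_\ell([\mathfrak{l}]\psi(\mathfrak{l})\ell^{-2-a-r})$ lies in $L[H_\mathfrak{n}]$), so the question is genuinely about vanishing of a difference.

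First I would pick a finite extension $E/L$ large enough to contain all $|H_\mathfrak{n}|$-th roots of unity. Maschke's theorem then gives a canonical decomposition $E[H_\mathfrak{n}] \simeq \prod_{\chi} E$ where $\chi$ ranges over the $E$-valued characters of $H_\mathfrak{n}$, via evaluation. Consequently, any $E[H_\mathfrak{n}]$-module $M$ decomposes as a direct sum $M = \bigoplus_\chi M^\chi$ of its $\chi$-isotypic components, and for each $\chi$ the projection $M \twoheadrightarrow M^\chi$ realises the largest quotient of $M$ on which $H_\mathfrak{n}$ acts through $\chi$. Applying this to $M := \mathbf{H}_\mathrm{mot}^5(\Pi,\psi,\mathfrak{n},L)\otimes_L E$, I obtain a canonical isomorphism
\[
\mathbf{H}_\mathrm{mot}^5(\Pi,\psi,\mathfrak{n},L)\otimes_L E \;\xrightarrow{\sim}\; \bigoplus_\chi \mathbf{H}_\mathrm{mot}^5(\Pi,\psi,\mathfrak{n},L)_{E,\chi}.
\]

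Next, let $D \in \mathbf{H}_\mathrm{mot}^5(\Pi,\psi,\mathfrak{n},L)$ denote the difference of the two sides of \eqref{eq: 16}. By \Cref{thm 4.3.4}, the image of $D$ (under $-\otimes_L E$ followed by projection) is zero in $\mathbf{H}_\mathrm{mot}^5(\Pi,\psi,\mathfrak{n},L)_{E,\chi}$ for every $E$-valued character $\chi$ of $H_\mathfrak{n}$. By the decomposition above, this forces $D \otimes 1 = 0$ in $\mathbf{H}_\mathrm{mot}^5(\Pi,\psi,\mathfrak{n},L)\otimes_L E$. Since $L \to E$ is faithfully flat, $D = 0$ in $\mathbf{H}_\mathrm{mot}^5(\Pi,\psi,\mathfrak{n},L)$, which is exactly the claim.

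There is essentially no obstacle here: the proof is a one-line reduction once one observes that \Cref{thm 4.3.4} was established uniformly over all pairs $(E,\chi)$ with the explicit $P_\ell$-factor independent of $\chi$, so the semisimplicity of $E[H_\mathfrak{n}]$ for $E$ a splitting field of the finite abelian group $H_\mathfrak{n}$ glues the local information into the global identity. The only mild subtlety to keep in mind is that the scalar $P_\ell([\mathfrak{l}]\psi(\mathfrak{l})\ell^{-2-a-r})$ on the right-hand side is already an element of $L[H_\mathfrak{n}]$ (not of $E[H_\mathfrak{n}]$) and specialises under $\chi$ to the scalar that appears in \Cref{thm 4.3.4}, so its action commutes with the decomposition.
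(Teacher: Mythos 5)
Your proof is correct and takes essentially the same route as the paper's: enlarge $E$ to split $H_\mathfrak{n}$, use semisimplicity of $E[H_\mathfrak{n}]$ to decompose $\mathbf{H}_\mathrm{mot}^5(\Pi,\psi,\mathfrak{n},L)\otimes_L E$ into $\chi$-eigenspaces which are exactly the $\chi$-quotients of \Cref{thm 4.3.4}, conclude vanishing over $E$, and descend to $L$ by (faithful) flatness. The only cosmetic difference is that you phrase the eigenspace decomposition via Maschke's theorem and the descent via faithful flatness, whereas the paper simply asserts the eigenspace splitting and calls the descent ``flat base change''; the underlying argument is identical.
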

\begin{proof}
    By \Cref{thm 4.3.4}, we can enlarge $E$ if necessary to contain all roots of unity of order equal to the size of $H_\mathfrak{n}.$ Then, \eqref{eq: 16} holds in $\mathbf{H}_\mathrm{mot}^5(\Pi,\psi,\mathfrak{n},L)_{E,\chi}$ for all characters of $H_\mathfrak{n}$. Also note that $\mathbf{H}_\mathrm{mot}^5(\Pi,\psi,\mathfrak{n},L)_{E}$ splits as a sum of eigenspaces over all characters of $H_\mathfrak{n}$. Combining these two facts, we see that \eqref{eq: 16} holds in $\mathbf{H}_\mathrm{mot}^5(\Pi,\psi,\mathfrak{n},L)_{E}$. Finally, since both sides of \eqref{eq: 16} are defined over $L$ and since $$\mathbf{H}_\mathrm{mot}^5(\Pi,\psi,\mathfrak{n},L)_{E}=\mathbf{H}_\mathrm{mot}^5(\Pi,\psi,\mathfrak{n},L)\otimes_L E$$
    the result follows by flat base change.
\end{proof}
\subsection{Mapping to Galois cohomology}\label{sec: mapping to Galois}

Recall that $\Pi$ denotes a general-type cuspidal automorphic representation of $\GSp_4$. We once again enlarge $L$, if necessary, to a finite Galois extension of $\mathbf{Q}$ over which $\Pi_\mathrm{f}$ is definable, as in \cite[\S 2]{loeffler2024universaleulergsp4}. We write $\Pi'$ for the (non-unitary) twist $\Pi\otimes|\mu(-)|_\mathbf{A}^{(3-w)/2}$. From now on, 
\begin{itemize}
    \item Let $p$ be a prime such that $p\nmid N_\Pi N_\mathfrak{f}$ and let $\mathfrak{P}|\mathfrak{p}$ be primes of $L|K$ above $p$.
    \item Let $d=0$, and hence $r=0$, $a=k_2-3$ and $b=k_1-k_2$. 
\end{itemize}
  Consider the natural map
\begin{align}\label{eq: basechange}
    H^5_\mathrm{\acute{e}t}(Y_G, \mathscr{W}_{\mathbf{Q}_p}^{a,b,0,*}(3-a))_{L_\mathfrak{P}}\rightarrow H^0\left(\mathbf{Q}, H^5_\mathrm{\acute{e}t}(Y_{G,\overline{\mathbf{Q}}},\mathscr{W}_{\mathbf{Q}_p}^{a,b,0,*}(3-a))_{L_\mathfrak{P}}\right)
\end{align}
where on the left we use continuous \'etale cohomology in the sense of \cite{jannsen1988continuous} and \cite{loeffler2021euler}. For a $\mathcal{H}_{L,N_\Pi}^{\GSp_4}$-module $M$, we write $M(\Pi_\mathrm{f}'^\vee)$ for the generalized eigenspace with respect to the spherical Hecke eigensystem of $\Pi_\mathrm{f}'^\vee$. We now claim that $H^5_\mathrm{\acute{e}t}(Y_G, \mathscr{W}_{\mathbf{Q}_p}^{a,b,0,*}(3-a))_{L_\mathfrak{P}}(\Pi_\mathrm{f}'^\vee)$ is contained in the homologically trivial classes (the kernel of \eqref{eq: basechange}): For ease of notation, we temporarily drop the subscript $L_\mathfrak{P}$. Under \eqref{eq: basechange}, an element in $H^5_\mathrm{\acute{e}t}(Y_G, \mathscr{W}_{\mathbf{Q}_p}^{a,b,0,*}(3-a))(\Pi_\mathrm{f}'^\vee)$ is mapped to $H^0(\mathbf{Q}, H^5_\mathrm{\acute{e}t}(Y_{G,\overline{\mathbf{Q}}},\mathscr{W}_{\mathbf{Q}_p}^{a,b,0,*}(3-a))(\Pi_\mathrm{f}'^\vee))$. By Kunneth formula for \'etale cohomology and the fact that direct limits commute with direct sums and tensor products, we have
\begin{align}\label{eq: 23}
    H^5_\mathrm{\acute{e}t}(Y_{G,\overline{\mathbf{Q}}},\mathscr{W}_{\mathbf{Q}_p}^{a,b,0,*}(3-a))(\Pi_\mathrm{f}'^\vee)=\bigoplus_{i+j=5} H^i_{\mathrm{\acute{e}t}}(Y_{\mathrm{GSp}_4,\overline{\mathbf{Q}}},\mathscr{V}_{\mathbf{Q}_p}^{a,b,*}(3-a))(\Pi_\mathrm{f}'^\vee)\otimes H^j_{\mathrm{\acute{e}t}}(Y_{\GL_2,\overline{\mathbf{Q}}},\mathbf{Q}_p)
\end{align}
where $\mathscr{V}_{\mathbf{Q}}^{a,b}$ is the $\GSp_4(\mathbf{A}_\mathbf{f})$-equivariant relative Chow motive over $Y_{\GSp_4}$ associated to the highest weight algebraic $\GSp_4$-representation $V^{a,b}$ under Ancona's functor \cite{ancona2015decomposition}. For $j\geq 2$, $H^j_{\mathrm{\acute{e}t}}(Y_{\GL_2,\overline{\mathbf{Q}}},\mathbf{Q}_p)=0$ by the Artin vanishing theorem. Moreover, by \cite[\S 1]{weissauer2005four}, the terms $H^i_{\mathrm{\acute{e}t}}(Y_{\mathrm{GSp}_4,\overline{\mathbf{Q}}},\dots)(\Pi_\mathrm{f}'^\vee)$ are zero for $i=4,5$. Thus, we conclude \eqref{eq: 23} is identically zero which proves the claim. 

Combining this with the edge map from the Hochschild-Serre spectral sequence, the projection map onto the $\Pi_\mathrm{f}'^\vee$-eigenspace
$$H^1\left(\mathbf{Q},H^4_\mathrm{\acute{e}t}(Y_{G,\overline{\mathbf{Q}}},\mathscr{W}_{\mathrm{Q}_p}^{a,b,0,*}(3-a))_{L_\mathfrak{P}}\right)\rightarrow H^1\left(\mathbf{Q},H^4_\mathrm{\acute{e}t}(Y_{G,\overline{\mathbf{Q}}},\mathscr{W}_{\mathbf{Q}_p}^{a,b,0,*}(3-a))_{L_\mathfrak{P}}[\Pi_\mathrm{f}'^\vee]\right)$$
lifts to a map 
\begin{align}\label{eq: 28}
\mathrm{AJ}^{\Pi,a,b,r}_{L_\mathfrak{P}}:H^5_\mathrm{\acute{e}t}(Y_G,\mathscr{W}_{\mathbf{Q}_p}^{a,b,0,*}(3-a))_{L_\mathfrak{P}}\rightarrow H^1(\mathbf{Q},H^4_\mathrm{\acute{e}t}(Y_{G,\overline{\mathbf{Q}}},\mathscr{W}_{\mathbf{Q}_p}^{a,b,0,*}(3-a))_{L_\mathfrak{P}}[\Pi_\mathrm{f}'^\vee])
\end{align}
characterized as the unique Hecke-equivariant map agreeing with the $\Pi_\mathrm{f}'^\vee$-projection of the \'etale Abel-Jacobi map on homologically trivial classes. We note that this is the exact same lifting strategy used in \cite[\S $3.4.2$]{loeffler2024universaleulergsp4}. The map \eqref{eq: 28} is by construction compatible with taking finite level on both sides. Using Kunneth formula one more time for the $H^4_\mathrm{\acute{e}t}$ term, we obtain a Hecke equivariant map which we again denote by 
\begin{align}
\scalemath{0.95}{\mathrm{AJ}^{\Pi,a,b,r}_{L_\mathfrak{P}}:H^5_\mathrm{\acute{e}t}(Y_G,\mathscr{W}_{\mathbf{Q}_p}^{a,b,0,*}(3-a))_{L_\mathfrak{P}}\rightarrow H^1\left(\mathbf{Q},H^3_\mathrm{\acute{e}t}(Y_{\mathrm{GSp_4},\overline{\mathbf{Q}}},\mathscr{V}_{\mathrm{Q}_p}^{a,b,*}(3-a))_{L_\mathfrak{P}}[\Pi_\mathrm{f}'^\vee]\otimes_{L_\mathfrak{P}} H^1_\mathrm{\acute{e}t}(Y_{\GL_2,\overline{\mathbf{Q}}},\mathbf{Q}_p)_{L_\mathfrak{P}}\right)}.
\end{align}
Let $\mathcal{M}(\Pi_\mathrm{f}')$ be an arbitrary but fixed $L$-model of $\Pi_\mathrm{f}'$, i.e. $\mathcal{M}(\Pi_\mathrm{f}')\otimes_L\mathbf{C}\simeq \Pi_\mathrm{f}'$ as $\GSp_4(\mathbf{A}_\mathrm{f})$-representations. As in \cite[Definition $3.3.2$]{loeffler2024universaleulergsp4}, we set 
$$V_\Pi:=\mathrm{Hom}_{L_\mathfrak{P}[\GSp_4(\mathbf{A}_\mathrm{f})]}\left(\mathcal{M}(\Pi_\mathrm{f}')_{L_\mathfrak{P}},H^3_{\mathrm{\acute{e}t},c}(Y_{\GSp_4,\overline{\mathbf{Q}}},\mathscr{V}_{\mathbf{Q}_p}^{a,b})_{L_\mathfrak{P}}\right).$$
Since by assumption, $\Pi$ is of general-type, $V_\Pi$ is a four dimensional $L_\mathfrak{P}$-linear $\mathrm{Gal}(\overline{\mathbf{Q}}/\mathbf{Q})$-representation, whose semi-simplification is isomorphic to $\rho_{\Pi,p}$ of \Cref{thm galois rep}. By \cite[Proposition $3.3.4$]{loeffler2024universaleulergsp4} there is a canonical isomorphism of $\GSp_4(\mathbf{A}_\mathrm{f})$-representations
\begin{align}\label{eq: 26}
    \mathcal{M}(\Pi_\mathrm{f}')_{L_\mathfrak{P}}\simeq \mathrm{Hom}_{\mathrm{Gal}(\overline{\mathbf{Q}}/\mathbf{Q})}\left(H^3_\mathrm{\acute{e}t}(Y_{\GSp_4,\overline{\mathbf{Q}}},\mathscr{V}_{\mathbf{Q}_p}^{a,b,*}(3))_{L_\mathfrak{P}}[\Pi_\mathrm{f}'^\vee],V_\Pi^* \right),\ \ v\mapsto \Phi_v
\end{align}
induced by the infinite level $\GSp_4(\mathbf{A}_\mathrm{f})\times\mathrm{Gal}(\overline{\mathbf{Q}}/\mathbf{Q})$ Poincar\'e duality pairing
$$\langle\cdot,\cdot\rangle:H^3_{\mathrm{\acute{e}t},c}(Y_{\GSp_4,\overline{\mathbf{Q}}},\mathscr{V}_{\mathbf{Q}_p}^{a,b})_{L_\mathfrak{P}}\times H^3_\mathrm{\acute{e}t}(Y_{\GSp_4,\overline{\mathbf{Q}}},\mathscr{V}_{\mathbf{Q}_p}^{a,b,*}(3))_{L_\mathfrak{P}}\rightarrow L_\mathfrak{p.}$$
In other words, if $v\in\mathcal{M}(\Pi_\mathrm{f}'), x\in H^3_\mathrm{\acute{e}t}(Y_{\GSp_4,\overline{\mathbf{Q}}},\mathscr{V}_{\mathbf{Q}_p}^{a,b,*}(3))_{L_\mathfrak{P}}[\Pi_\mathrm{f}'^\vee]$ and $f\in V_\Pi$, then the isomorphism \eqref{eq: 26} is characterized by $\Phi_v(x)(f)=\langle f(v),x\rangle$.

Recall that the subgroup $U^{\GSp_4}(N_\Pi)\subseteq\GSp_4(\hat{\mathbf{Z}})$ is an open compact level subgroup, unramified away from $N_\Pi:=\prod_{\ell\in S}\ell$, where $S$ is a finite set of primes containing all primes at which $\Pi$ ramifies. By enlarging $S$ and shrinking $U^{\GSp_4}(N_\Pi)$ is necessary, we may assume that $\Pi_\mathrm{f}$ has non-zero $U^{\GSp_4}(N_\Pi)$-invariants. We fix a choice of non-zero $U^{\GSp_4}(N_\Pi)$-invariant vector $v_0\in \mathcal{M}(\Pi_\mathrm{f}')^{U^{\GSp_4}(N_\Pi)}$, which corresponds to a modular parametrisation $\Phi_{v_0}$ under \eqref{eq: 26}. We denote by $T_\Pi^*$ the image of $H^3_\mathrm{\acute{e}t}(Y_{\GSp_4,\overline{\mathbf{Q}}},\mathscr{V}_{\mathbf{Z}_p}^{a,b,*}(3))_{\mathcal{O}_{L_\mathfrak{P}}}$ under $\Phi_{v_0}$. Then $T_\Pi^*$ is a Galois-stable $\mathcal{O}_{L_\mathfrak{P}}$-lattice in $V_\Pi^*.$

\begin{rem} If $\Pi_\mathrm{f}$ is assumed to be everywhere locally generic, we can take $\mathcal{M}(\Pi_\mathrm{f}')$   to be the $L$-linear Whittaker model of $\Pi_\mathrm{f}'$ in the sense of  \cite[Definition $10.2$]{loeffler2021higher} and $U^{\GSp_4}(N_\Pi)$ can be taken to be the paramodular subgroup of level $N_\Pi$. Then, by the new-vector theory of \cite{roberts2007local} and \cite{okazaki2019localwhittakernewformsgsp4matching}, there is a unique choice of non-zero new-vector $v_0$.
\end{rem}
The composition of $\mathrm{AJ}_{L_\mathfrak{P}}^{\Pi,a,b,r}$ with $\Phi_{v_0,*}$ (the induced map on $H^1(\mathbf{Q},\dots)$) gives a linear map 
\begin{align}
\Phi_{v_0,*}\circ\mathrm{AJ}_{L_\mathfrak{P}}^{\Pi,a,b,r} : H^5_\mathrm{\acute{e}t}(Y_G,\mathscr{W}_{\mathbf{Q}_p}^{a,b,0,*}(3-a))_{L_\mathfrak{P}}&\rightarrow H^1\left(\mathbf{Q},V_\Pi^*(-a)\otimes_{L_\mathfrak{P}} H^1_\mathrm{\acute{e}t}(Y_{\GL_2,\overline{\mathbf{Q}}},\mathbf{Q}_p)_{L_\mathfrak{P}}\right)\\
\nonumber &\overset{\simeq}{\rightarrow}  H^1\left(\mathbf{Q},V_\Pi^*(-a-1)\otimes_{L_\mathfrak{P}} H^1_\mathrm{\acute{e}t}(Y_{\GL_2,\overline{\mathbf{Q}}},\mathbf{Q}_p)(1)_{L_\mathfrak{P}}\right)
\end{align}
\begin{defn}
    We write $\mathcal{A}_{cp}$ for the set of integral ideals in $\mathcal{A}_c$ \emph{(\emph{\Cref{def set A and norm map}})} that satisfy the extra condition of having norm coprime to $p$. 
\end{defn}
\begin{defn}\label{def H^1_et}
   Let $\mathfrak{n}$ be an ideal in $\mathcal{A}_{cp}$. Similarly to \emph{\Cref{def quot of H5}}, we set 
   $$H^1_\mathrm{\acute{e}t}(\psi,\mathfrak{n},L_\mathfrak{P}):= L_\mathfrak{P}[H_\mathfrak{n}]\bigotimes_{\left(\phi_\mathfrak{n},\mathbf{T}_{L_\mathfrak{P}, N_\mathfrak{n}}^{\GL_2}\right)} H^1_\mathrm{\acute{e}t}(Y_{\GL_2}(N_\mathfrak{n})_{\overline{\mathbf{Q}}},\mathbf{Q}_p(1))_{L_\mathfrak{P}}.$$
\end{defn}

\begin{prop}\label{prop descent of AJ}Let $\mathfrak{n}$ be an ideal in $\mathcal{A}_{cp}$ and write $r_\mathrm{\acute{e}t}$ for the $p$-adic \'etale regulator map as in \cite{jannsen1988continuous}. The composition $\Phi_{v_0,*}\circ \mathrm{AJ}_{L_\mathfrak{P}}^{\Pi,a,b,r}\circ r_{\mathrm{\acute{e}t},L_\mathfrak{P}}$ descents to a well-defined map
    $$\Phi_{v_0,*}\circ \mathrm{AJ}_{L_\mathfrak{P}}^{\Pi,a,b,r}\circ r_{\mathrm{\acute{e}t},L_\mathfrak{P}}: H^5_\mathrm{mot}(\Pi,\psi,\mathfrak{n},L)\rightarrow H^1\left(\mathbf{Q},V_\Pi^*(-a-1)\otimes_{L_\mathfrak{P}} H^1_\mathrm{\acute{e}t}(\psi,\mathfrak{n},L_\mathfrak{P})\right)$$
    \begin{proof}
        Recall from \Cref{def quot of H5} that $$H^5_\mathrm{mot}(\Pi,\psi,\mathfrak{n},L)=L[H_\mathfrak{n}]\underset{\left(\Theta_{\Pi^\vee}[\tfrac{w-3}{2}]\otimes\phi_\mathfrak{n},\mathcal{H}_{L,N_\Pi}^{\GSp_4}\otimes_L\mathbf{T}_{L,N_\mathfrak{n}}^{\GL_2}\right)}{\bigotimes}\mathbf{H}_\mathrm{mot}^5(N_\Pi,N_\mathfrak{n})_{L}.$$
         The descent over the $\GL_2$ Hecke algebra with respect to $\phi_\mathfrak{n}$, follows by construction, the compatibility of $\mathrm{AJ}_{L_\mathfrak{P}}^{\Pi,a,b,r}$ and $r_{\mathrm{\acute{e}t},L_\mathfrak{P}}$ with finite levels, and the Hecke equivariance of these two maps. It remains to verify the descent over the unramified $\GSp_4$ Hecke algebra. Let $\theta$ be a Hecke operator in the unramified Hecke algebra $\mathcal{H}_{L,N_\Pi}^{\GSp_4}$ and let $x\in H^5_\mathrm{mot}(N_\Pi,N_\mathfrak{n})_{L}$. Again, by the Hecke equivariance of $\mathrm{AJ}_{L_\mathfrak{P}}^{\Pi,a,b,r}$ and $r_{\mathrm{\acute{e}t},L_\mathfrak{P}}$, the image of $\theta\cdot x$ under $\Phi_{v_0,*}\circ \mathrm{AJ}_{L_\mathfrak{P}}^{\Pi,a,b,r}\circ r_{\mathrm{\acute{e}t},L_\mathfrak{P}}$ is given by $\Phi_{v_0,*}(\theta\cdot (\mathrm{AJ}_{L_\mathfrak{P}}^{\Pi,a,b,r}(r_{\mathrm{\acute{e}t},L_\mathfrak{P}}(x)))=:C.$ Regarding the latter as a cocycle and evaluating at an element $g\in\mathrm{Gal}(\overline{\mathbf{Q}}/\mathbf{Q})$, we have $C(g)=\Phi_{v_0}(\theta\cdot\mathrm{AJ}_{L_\mathfrak{P}}^{\Pi,a,b,r}(r_{\mathrm{\acute{e}t},L_\mathfrak{P}}(x))(g))$. By linearity, we may assume that $\mathrm{AJ}_{L_\mathfrak{P}}^{\Pi,a,b,r}(r_{\mathrm{\acute{e}t},L_\mathfrak{P}}(x))(g)$ is a pure tensor $$y_1\otimes y_2\in H^3_\mathrm{\acute{e}t}(Y_{\mathrm{GSp_4},\overline{\mathbf{Q}}},\mathscr{V}_{\mathrm{Q}_p}^{a,b,*}(3-a))_{L_\mathfrak{P}}[\Pi_\mathrm{f}'^\vee]\otimes_{L_\mathfrak{P}} H^1_\mathrm{\acute{e}t}(Y_{\GL_2,\overline{\mathbf{Q}}},\mathbf{Q}_p)_{L_\mathfrak{P}}.$$ 
         Hence $C(g)=\Phi_{v_0}(\theta\cdot(y_1\otimes y_2))=\Phi_{v_0}(\theta\cdot y_1)\otimes y_2$ since the $\GSp_4$ Hecke action only takes place on the $H^3_\mathrm{\acute{e}t}$ component.
         By the characterizing property of \eqref{eq: 26}, upon evaluating on any $f\in V_\Pi$ we see that $\Phi_{v_0}(\theta\cdot y_1)(f)=\left\langle f(v_0),\theta\cdot y_1\right\rangle.$
         By construction, all three vectors $v_0,f(v_0)$ and $y_1$ are invariant under the action of the subgroup $U^{\GSp_4}(N_\Pi).$ In particular, they are invariant under $\GSp_4(\hat{\mathbf{Z}}^S)$, and $\theta$ is by definition an element of $C_c^\infty(\GSp_4(\hat{\mathbf{Z}}^S)\backslash \GSp_4(\mathbf{A}_\mathrm{f}^S)/\GSp_4(\hat{\mathbf{Z}}^S))$. Thus, by the $\GSp_4(\mathbf{A}_\mathrm{f})$-equivariance of the Poincar\'e pairing, and the definition of $V_\Pi$, we have 
         \begin{align*}
             \Phi_{v_0}(\theta\cdot y_1)(f)&=\left\langle f(v_0),\theta\cdot y_1\right\rangle\\
             &=\left\langle f(\theta'\cdot v_0),y_1\right\rangle\\
             &=\Theta_{\Pi'}(\theta')\cdot \left\langle f(v_0),y_1\right\rangle\\
             &=\Theta_{\Pi'}(\theta')\Phi_{v_0}( y_1)(f)
        \end{align*}
         where the last equality follows from the fact $\Pi$ is spherical away from $N_\Pi$, and $\Theta_{\Pi'}$ denotes the spherical Hecke eigensystem of $\Pi'=\Pi\otimes |\mu(-)|_\mathbf{A}^{(3-w)/2}$. But then, $\Theta_{\Pi'}(\theta')$ is nothing more than $\Theta_{\Pi^\vee}[\tfrac{w-3}{2}](\theta)$, which is the map appearing in the definition of $H^5_\mathrm{mot}(\Pi,\psi,\mathfrak{n},L)$. 
    \end{proof}
\end{prop}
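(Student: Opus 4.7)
Setting $\alpha := \Phi_{v_0,*} \circ \mathrm{AJ}_{L_\mathfrak{P}}^{\Pi,a,b,r} \circ r_{\mathrm{\acute{e}t},L_\mathfrak{P}}$, the plan is to verify that for every $T \in \mathcal{H}^{\GSp_4}_{L,N_\Pi} \otimes_L \mathbf{T}^{\GL_2}_{L,N_\mathfrak{n}}$ and every $x \in \mathbf{H}_\mathrm{mot}^5(N_\Pi,N_\mathfrak{n})_L$ one has $\alpha(T \cdot x) = \bigl(\Theta_{\Pi^\vee}[\tfrac{w-3}{2}] \otimes \phi_\mathfrak{n}\bigr)(T) \cdot \alpha(x)$ in the target; then $\alpha$ extends $L[H_\mathfrak{n}]$-linearly to give the claimed map. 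Because the two Hecke factors act on disjoint Kunneth components of the cohomology (the $\GL_2$ factor touches only the $H^1_\mathrm{\acute{e}t}(Y_{\GL_2},\cdot)$-component, while the $\GSp_4$ factor touches only the $H^3_\mathrm{\acute{e}t}(Y_{\GSp_4},\cdot)$-component), this splits into two independent verifications.

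The $\GL_2$ descent I would handle first, and it should be essentially tautological. Each of $r_\mathrm{\acute{e}t}$, $\mathrm{AJ}_{L_\mathfrak{P}}^{\Pi,a,b,r}$, and $\Phi_{v_0,*}$ is Hecke-equivariant and compatible with finite-level descent on the $\GL_2$ side; the target group $H^1_\mathrm{\acute{e}t}(\psi,\mathfrak{n},L_\mathfrak{P})$ is defined precisely as the quotient imposing the relations $T' = \phi_\mathfrak{n}(T')$ for $T' \in \mathbf{T}^{\GL_2}_{L,N_\mathfrak{n}}$, so the desired factorization is immediate from the definitions.

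The $\GSp_4$ descent is where the real content sits. Given $\theta \in \mathcal{H}^{\GSp_4}_{L,N_\Pi}$ and a class $y_1$ in the $\Pi_\mathrm{f}'^\vee$-eigenspace inside $H^3_\mathrm{\acute{e}t}(Y_{\GSp_4,\overline{\mathbf{Q}}}, \mathscr{V}_{\mathbf{Q}_p}^{a,b,*}(3))_{L_\mathfrak{P}}$, I would begin from the characterizing formula $\Phi_{v_0}(y_1)(f) = \langle f(v_0), y_1\rangle$ for $f \in V_\Pi$, apply it with $y_1$ replaced by $\theta \cdot y_1$, use the Hecke-adjunction property of Poincar\'e duality to swap $\theta$ across the pairing as $\theta'$, and then use the $\GSp_4(\mathbf{A}_\mathrm{f})$-equivariance of $f$ to rewrite $\theta' \cdot f(v_0) = f(\theta' \cdot v_0)$. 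Since $U^{\GSp_4}(N_\Pi)$ is unramified away from $N_\Pi$ (hence contains $\GSp_4(\hat{\mathbf{Z}}^S)$) and $\theta$ lies in the spherical Hecke algebra at exactly those places, the vector $v_0$ is a $\theta'$-eigenvector with eigenvalue $\Theta_{\Pi'}(\theta')$, so this scalar pulls out of the pairing.

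The last step is to match $\Theta_{\Pi'}(\theta')$ with the normalization $\Theta_{\Pi^\vee}[\tfrac{w-3}{2}](\theta)$ used to define $\mathbf{H}_\mathrm{mot}^5(\Pi,\psi,\mathfrak{n},L)$. This is pure bookkeeping: the involution $\theta \mapsto \theta'$ interchanges the spherical Hecke eigensystem of an irreducible representation with that of its contragredient, and combining this with the twist $\Pi' = \Pi \otimes |\mu(-)|_\mathbf{A}^{(3-w)/2}$ produces exactly the shift $[\tfrac{w-3}{2}]$ applied to $\Theta_{\Pi^\vee}$. I expect the main obstacle to be precisely this normalization alignment --- once the involution, the contragredient, and the central twist are tracked carefully, the rest is formal from the Hecke-equivariance and Kunneth inputs already in place.
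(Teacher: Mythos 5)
Your proposal follows essentially the same strategy as the paper's proof: reduce the descent to the two Hecke factors independently, dispatch the $\GL_2$ part by Hecke-equivariance and level compatibility, and for the $\GSp_4$ part use the characterizing property of $\Phi_{v_0}$ via the Poincar\'e duality pairing to move $\theta$ across as $\theta'$ and act on the spherical vector $v_0$, extracting the eigenvalue $\Theta_{\Pi'}(\theta')=\Theta_{\Pi^\vee}[\tfrac{w-3}{2}](\theta)$. The bookkeeping steps are slightly reorganized, but the argument is the same.
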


\begin{defn}
    For $\mathfrak{n}\in\mathcal{A}_{cp}$, we define the class 
    $$_c \tilde{z}_\mathfrak{n}^{\Pi,\psi}\in H^1\left(\mathbf{Q},V_\Pi^*(-a-1)\otimes_{L_\mathfrak{P}} H^1_\mathrm{\acute{e}t}(\psi,\mathfrak{n},L_\mathfrak{P})\right)$$ to be the image of the class $_c \Xi_\mathfrak{n}^{\Pi,\psi}$ under the map of \emph{\Cref{prop descent of AJ}}.
\end{defn}
\begin{rem}\label{rem integrality}By \Cref{prop: integrality} and the fact that everything now is defined over $L_\mathfrak{P}$, we see that the classes $_c \tilde{z}_\mathfrak{n}^{\Pi,\psi}$ lie in the image of $H^1(\mathbf{Q}, T_\Pi^*(-a-1)\otimes_{\mathcal{O}_{L_\mathfrak{P}}} H^1_\mathrm{\acute{e}t}(\psi,\mathfrak{n},\mathcal{O}_{L_\mathfrak{P}}))$ where $H^1_\mathrm{\acute{e}t}(\psi,\mathfrak{n},\mathcal{O}_{L_\mathfrak{P}})$ is defined as in \Cref{def H^1_et} but replacing $L_\mathfrak{P}$ with $\mathcal{O}_{L_\mathfrak{P}}$ and $\mathbf{Q}_p$ with $\mathbf{Z}_p.$
\end{rem}
\subsection{CM-patching ala Lei-Loeffler-Zerbes}\label{sec: CM patching} Let $\mathfrak{n}$ be an integral ideal in $\mathcal{A}_{cp}.$
\begin{defn}[{\cite[Definition $5.2.1$ \& $5.2.2$]{Lei_Loeffler_Zerbes_2015}}]\label{def H^1p}Let $H_\mathfrak{n}^{(p)}$ be the largest quotient of $H_\mathfrak{n}$ whose order is a power of $p$, and let 
$$\phi_\mathfrak{n}^{(p)}:\mathbf{T}_{L_\mathfrak{P},N_\mathfrak{n}}^{\GL_2}\longrightarrow L_\mathfrak{P}[H_\mathfrak{n}^{(p)}]$$
be the composition of the map $\phi_\mathfrak{n}$ and the natural projection $H_\mathfrak{n}\rightarrow H_\mathfrak{n}^{(p)}.$ Moreover, for $R\in\{L_\mathfrak{P},\mathcal{O}_{L_\mathfrak{P}}\}$, let 
$$H^{1}(\psi,\mathfrak{n},R):=R[H_\mathfrak{n}^{(p)}]\bigotimes_{\left(\phi_\mathfrak{n}^{(p)},\mathbf{T}_{R,N_\mathfrak{n}}^{\GL_2}\right)} H^1_\mathrm{\acute{e}t}(Y_{\GL_2}(N_\mathfrak{n})_{\overline{\mathbf{Q}}},A_R(1))_{R}$$
where $A_R:=\mathbf{Q}_p$ if $R=L_\mathfrak{P}$, and $\mathbf{Z}_p$ if $R=\mathcal{O}_{L_\mathfrak{P}}.$
\end{defn}

\begin{defn}\label{def: classe over H_n^(p)}
    For $\mathfrak{n}\in\mathcal{A}_{cp}$, we define the class 
    $$_c z_\mathfrak{n}^{\Pi,\psi}\in H^1\left(\mathbf{Q},V_\Pi^*(-a-1)\otimes_{L_\mathfrak{P}} H^{1}(\psi,\mathfrak{n},L_\mathfrak{P})\right)$$
    to be the image of the class $_c \tilde{z}_\mathfrak{n}^{\Pi,\psi}$ under the map induced from the natural projection $H_\mathfrak{n}\rightarrow H_\mathfrak{n}^{(p)}$.
\end{defn}
 Let $\mathfrak{n},\mathfrak{nl}\in\mathcal{A}_{cp}$ with $\ell=\mathfrak{l}\overline{\mathfrak{l}}\nmid \mathfrak{n}$ a split prime. In the exact same way as \Cref{lem intertwining action}, the norm map of \Cref{def set A and norm map}, descends to 
$$\mathcal{N}^{\mathfrak{nl}}_\mathfrak{n}:H^{1}(\psi,\mathfrak{nl},R)\longrightarrow H^{1}(\psi,\mathfrak{n},R)$$
for $R\in\{L_\mathfrak{P},\mathcal{O}_{L_\mathfrak{P}}\}$.
\begin{cor}\label{cor norm-relations one but final}
 Let $\mathfrak{n},\mathfrak{nl}\in\mathcal{A}_{cp}$ with $\ell=\mathfrak{l}\overline{\mathfrak{l}}\nmid N_\mathfrak{n} N_\Pi$ a split prime. Then,
$$\mathcal{N}^{\mathfrak{nl}}_\mathfrak{n}\left(\ _cz_{\mathfrak{nl}}^{\Pi,\psi}\right)=P_\ell\left([\mathfrak{l}]\psi(\mathfrak{l})\ell^{-2-a}\right)\cdot\ _c z_{\mathfrak{n}}^{\Pi,\psi}$$
as an equality of elements in $H^1(\mathbf{Q},V_\Pi^*(-a-1)\otimes_{L_\mathfrak{P}} H^{1}(\psi,\mathfrak{n},L_\mathfrak{P}))$. Moreover, the classes $_cz_{\mathfrak{n}}^{\Pi,\psi}$ for any $\mathfrak{n}\in\mathcal{A}_{cp}$ lie in the image of $H^1(\mathbf{Q}, T_\Pi^*(-a-1)\otimes_{\mathcal{O}_{L_\mathfrak{P}}} H^{1}(\psi,\mathfrak{n},\mathcal{O}_{L_\mathfrak{P}}))$.
\begin{proof}
    The norm relation follows from \Cref{cor norm-relations over ray class groups}, whereass the integrality statement follows from \Cref{rem integrality}.
\end{proof}
\end{cor}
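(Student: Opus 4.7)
The plan is to obtain this statement as a formal consequence of the motivic norm-relation established in the previous subsection, together with the compatibility properties built up along the way. The argument splits cleanly into a norm-relation step and an integrality step.

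For the norm-relation, I would first observe that the natural transformation from $\mathbf{H}_\mathrm{mot}^5(\Pi,\psi,\mathfrak{n},L)$ to $H^1(\mathbf{Q},V_\Pi^*(-a-1)\otimes_{L_\mathfrak{P}} H^{1}(\psi,\mathfrak{n},L_\mathfrak{P}))$ sending $_c\Xi_\mathfrak{n}^{\Pi,\psi}$ to $_cz_\mathfrak{n}^{\Pi,\psi}$ is, by definition, the composition of the $p$-adic étale regulator, the map $\Phi_{v_0,*}\circ\mathrm{AJ}_{L_\mathfrak{P}}^{\Pi,a,b,r}$ of \Cref{prop descent of AJ}, and the pushforward induced by $H_\mathfrak{n}\twoheadrightarrow H_\mathfrak{n}^{(p)}$. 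Each of these three constructions is manifestly functorial in the $\GL_2$-level: the regulator is compatible with pushforward along the degeneracy maps $\mathrm{pr}_1,\mathrm{pr}_2$, the Abel--Jacobi map is Hecke equivariant, and the $\phi_\mathfrak{n}$-descent was built precisely so that the operator $\mathcal{N}_\mathfrak{n}^{\mathfrak{nl}}$ of \Cref{def set A and norm map} makes sense on all of these cohomology groups. Applying this compatibility to the motivic identity of \Cref{cor norm-relations over ray class groups} immediately yields
\begin{equation*}
    \mathcal{N}^{\mathfrak{nl}}_\mathfrak{n}\left(\ _cz_{\mathfrak{nl}}^{\Pi,\psi}\right)=P_\ell\left([\mathfrak{l}]\psi(\mathfrak{l})\ell^{-2-a-r}\right)\cdot\ _c z_{\mathfrak{n}}^{\Pi,\psi},
\end{equation*}
and the assumption $d=0$ forces $r=0$, giving the exponent $-2-a$ as claimed.

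For integrality, I would invoke \Cref{prop: integrality}, which says that the image of $_c\Xi_{\mathrm{mot}}(\mu[N,N'])$ under the $p$-adic étale regulator already lives in cohomology with $\overline{\mathbf{Z}}_p$-coefficients. Since everything has been extended to be $L_\mathfrak{P}$-linear, this translates into $\mathcal{O}_{L_\mathfrak{P}}$-integrality. The lattice $T_\Pi^*$ in $V_\Pi^*$ was defined as the image of the integral $H^3_\mathrm{\acute{e}t}$ under $\Phi_{v_0}$, so $\Phi_{v_0,*}$ sends integral classes to integral classes. Combining these ingredients (as summarized in \Cref{rem integrality}) shows that the classes $_c\tilde z_\mathfrak{n}^{\Pi,\psi}$, and hence their projections $_c z_\mathfrak{n}^{\Pi,\psi}$, lie in the image of the cohomology of $T_\Pi^*(-a-1)\otimes_{\mathcal{O}_{L_\mathfrak{P}}} H^{1}(\psi,\mathfrak{n},\mathcal{O}_{L_\mathfrak{P}})$.

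There is no real obstacle here: all the serious work was done in constructing $\mathcal{N}_\mathfrak{n}^{\mathfrak{nl}}$ as a Hecke-descended operator and in proving the motivic norm-relation. The only thing to be slightly careful about is that the map $H_\mathfrak{n}\twoheadrightarrow H_\mathfrak{n}^{(p)}$ is compatible with the operators $\mathcal{N}_\mathfrak{n}^{\mathfrak{nl}}$, which is clear from the fact that the formal expressions defining $\mathcal{N}_\mathfrak{n}^{\mathfrak{nl}}$ involve only group elements of $H_\mathfrak{n}$ (namely $[\mathfrak{l}]$ and $[\overline{\mathfrak{l}}]$) and hence descend to $H_\mathfrak{n}^{(p)}$.
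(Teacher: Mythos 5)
Your proposal is correct and follows essentially the same approach as the paper, which simply cites the motivic norm-relation of \Cref{cor norm-relations over ray class groups} for the first statement (with $r=0$ forced by $d=0$) and \Cref{rem integrality} for the second. Your elaboration of the intervening functoriality — the regulator, the Abel–Jacobi map of \Cref{prop descent of AJ}, and the $H_\mathfrak{n}\twoheadrightarrow H_\mathfrak{n}^{(p)}$ projection all commute with the degeneracy maps and the group-ring coefficients defining $\mathcal{N}_\mathfrak{n}^{\mathfrak{nl}}$ — is exactly what the paper implicitly relies on (and states just before the corollary, where it records that $\mathcal{N}_\mathfrak{n}^{\mathfrak{nl}}$ descends to $H^1(\psi,\cdot,R)$).
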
 However, the classes $_cz_\mathfrak{n}^{\Pi,\psi}$ still live on different modular curves as we vary $\mathfrak{n}$. We will now apply the CM-patching method of \cite[\S $5.2$]{Lei_Loeffler_Zerbes_2015} to merge them together and obtain Euler system norm-relations over an infinite tower of ray class fields of our imaginary quadratic field $K$. We now summarize the results of \cite[\S$5.2$]{Lei_Loeffler_Zerbes_2015}, translated to our setting. Firstly, the choice of the primes $\mathfrak{P}|\mathfrak{p}|p$ of $L/K/\mathbf{Q}$, allows us to fix once and for all the following commutative diagram of embeddings of fields
\begin{align}\label{eq: diagram of fields}\begin{tikzcd}[ampersand replacement=\&,cramped,column sep=scriptsize,row sep=small]
	L \& {L_\mathfrak{P}} \& {\overline{\mathbf{Q}}_p} \\
	K \& {K_\mathfrak{p}}
	\arrow[hook, from=1-1, to=1-2]
	\arrow[hook, from=1-2, to=1-3]
	\arrow[hook, from=2-1, to=1-1]
	\arrow[hook, from=2-1, to=2-2]
	\arrow[hook, from=2-2, to=1-2]
	\arrow[hook, from=2-2, to=1-3]
\end{tikzcd}\end{align}
which we will always implicitly use for the rest of the paper when working inside $\overline{\mathbf{Q}}_p.$ We define the continuous character $$\psi_\mathfrak{P}:K^\times\backslash\mathbf{A}_{K,\mathrm{f}}^\times \longrightarrow L_\mathfrak{P}^\times,\ \ \ x\mapsto x_\mathfrak{p}^{-1}\psi_\mathrm{f}(x).$$
This is well-defined since by assumption $\psi$ has infinity type $(-1,0)$. We regard $\psi_\mathfrak{P}$ as a character of $\mathrm{Gal}(\overline{K}/K)$ via class field theory, where we normalize the global Artin map $K^\times\backslash\mathbf{A}_K^\times \rightarrow \mathrm{Gal}(\overline{K }/K)^\mathrm{ab}$ to map uniformizers to geometric Frobenius elements. Let $R\in\{L_\mathfrak{P},\mathcal{O}_{L_\mathfrak{P}}\}$. 

\begin{enumerate}
    \item Let $\mathfrak{n}\in\mathcal{A}_{cp}$. Then the $R[H_\mathfrak{n}^{(p)}]$-module $H^{1}(\psi,\mathfrak{n},R)$ is free of rank $2$: This is \cite[Proposition $5.2.3$]{Lei_Loeffler_Zerbes_2015}.
    \item Let $\mathfrak{n}\in\mathcal{A}_{cp}$. Then the module $H^{1}(\psi,\mathfrak{n},L_\mathfrak{P})$ is isomorphic as a $L_\mathfrak{P}[H_\mathfrak{n}^{(p)}]$-linear $\mathrm{Gal}(\overline{\mathbf{Q}}/\mathbf{Q})$-module to the induced representation $\mathrm{Ind}_{K(\mathfrak{n})}^\mathbf{Q}(L_\mathfrak{P}(\psi_\mathfrak{P}))^*$, where $K(\mathfrak{n})$ denotes the ray class field corresponding to $H_\mathfrak{n}^{(p)}.$ This is \cite[Theorem $5.2.4$]{Lei_Loeffler_Zerbes_2015}. The proof is the same after verifying that in our adelic setup, and for any character $\eta$ of $H_\mathfrak{n}^{(p)}$, the $L_\mathfrak{P}$-vector space $$L_\mathfrak{P}\bigotimes_{\left(\eta,L_\mathfrak{P}[H_\mathfrak{n}^{(p)}]\right)}H^{1}(\psi,\mathfrak{n},L_\mathfrak{P})$$
    is identified with the largest quotient of $H^1_\mathrm{\acute{e}t}(Y_{\GL_2}(N_\mathfrak{n})_{\overline{\mathbf{Q}}},\mathbf{Q}_p(1))_{L_\mathfrak{P}}$ on which the Hecke operators $T_{\mathfrak{n},\ell}'$ for all primes $\ell$, act as multiplication by the $\ell$-th Fourier coefficient of the eigenform $g_{\psi \eta}$. This follows by construction.
    \item  Let $\mathfrak{n},\mathfrak{nl}\in\mathcal{A}_{cp}$ with $\ell=\mathfrak{l}\overline{\mathfrak{l}}\nmid \mathfrak{n}$ a split prime and let $R\in\{L_\mathfrak{p},\mathcal{O}_{L_\mathfrak{P}}\}$. The map  
$$R[H_\mathfrak{n}^{(p)}]\otimes_{R[H_\mathfrak{ln}^{(p)}]} H^{1}(\psi,\mathfrak{ln},R)\longrightarrow H^{1}(\psi,\mathfrak{n},R),\ \ f(h)\otimes x\mapsto f(h)\cdot\mathcal{N}^\mathfrak{ln}_\mathfrak{n}(x)$$
is an isomorphism. This is part of \cite[Proposition $5.2.5$]{Lei_Loeffler_Zerbes_2015}, together with the fact that our extra normalization factor $[\overline{\mathfrak{l}}]^{-2}\psi(\overline{\mathfrak{l}})^{-2}$ in the definition of $\mathcal{N}^{\mathfrak{ln}}_\mathfrak{n}$, is a unit in $\mathcal{O}_{L_\mathfrak{P}}[H_\mathfrak{n}^{(p)}].$
\item\label{eq: item 4} One can find a family of (non-canonical) isomorphisms 
\begin{align*}\label{eq: fam of isos}\mathfrak{\nu_\mathfrak{n}}:H^{1}(\psi,\mathfrak{n},R)\simeq\mathrm{Ind}_{K(\mathfrak{n})}^\mathbf{Q} R(\psi_\mathfrak{P}^{-1}),\ \ \forall\  \mathfrak{n}\in\mathcal{A}_{cp}\end{align*}
of $R[H_\mathfrak{n}^{(p)}][\mathrm{Gal}(\overline{\mathbf{Q}}/\mathbf{Q})]$-modules, such that for any $\mathfrak{n},\mathfrak{nl}\in\mathcal{A}_{cp}$ with $\ell=\mathfrak{l}\overline{\mathfrak{l}}\nmid \mathfrak{n} $ a split prime, the diagram 
\[\begin{tikzcd}[ampersand replacement=\&]
	{H^{1}(\psi,\mathfrak{nl},R)} \& {\mathrm{Ind}_{K(\mathfrak{nl})}^\mathbf{Q} R(\psi_\mathfrak{P}^{-1})} \\
	{H^{1}(\psi,\mathfrak{n},R)} \& {\mathrm{Ind}_{K(\mathfrak{n})}^\mathbf{Q} R(\psi_\mathfrak{P}^{-1})}
	\arrow["{\simeq_{\nu_\mathfrak{nl}}}", from=1-1, to=1-2]
	\arrow["{\mathcal{N}^\mathfrak{nl}_\mathfrak{n}}"', from=1-1, to=2-1]
	\arrow[from=1-2, to=2-2]
	\arrow["{\simeq_{\nu_\mathfrak{n}}}", from=2-1, to=2-2]
\end{tikzcd}\]
commutes. This is part of \cite[Corollary $5.2.6$]{Lei_Loeffler_Zerbes_2015}, and uses the three prior results mentioned above. We use the same notation $\nu_\mathfrak{n}$ for both $R=\mathcal{O}_{L_\mathfrak{P}}$ and $R=L_\mathfrak{P}$ to signify that the latter is taken to be the base change of the former. We also note that our $\nu_\mathfrak{n}$ are not exactly the same as the ones in \textit{loc.cit.}, since as mentioned above, the maps $\mathcal{N}_\mathfrak{n}^{\mathfrak{nl}}$ differ by a normalization factor from the ones in \textit{loc.cit.}. The way the $\nu_\mathfrak{n}$ are obtained however, is exactly the same as in \textit{loc.cit.}.
\end{enumerate}
\subsubsection{Galois cohomology classes over ray class fields} We now have all the ingredients needed to define classes in the Galois cohomology over ray class fields and show that they are norm compatible. We fix a family of isomorphisms $\{\nu_\mathfrak{n}\ |\ \mathfrak{n}\in\mathcal{A}_{cp}\}$ as above. Then, by a form of Shapiro's lemma and flatness of $L_\mathfrak{P}$ over $\mathcal{O}_{L_\mathfrak{P}}$, we get a commutative diagram
\begin{align}\label{eq: comm diagram}\begin{tikzcd}[ampersand replacement=\&]
	{H^1(\mathbf{Q},V_\Pi^*(-a-1)\otimes_{L_\mathfrak{P}} H^{1}(\psi,\mathfrak{n},L_\mathfrak{P}))} \& {H^1(K(\mathfrak{n}),V_\Pi^*(-a-1)(\psi_\mathfrak{P}^{-1}))} \\
	{H^1(\mathbf{Q}, T_\Pi^*(-a-1)\otimes_{\mathcal{O}_{L_\mathfrak{P}}} H^{1}(\psi,\mathfrak{n},\mathcal{O}_{L_\mathfrak{P}}))} \& {H^1(K(\mathfrak{n}),T_\Pi^*(-a-1)(\psi_\mathfrak{P}^{-1}))}
	\arrow["\simeq", from=1-1, to=1-2]
	\arrow[from=2-1, to=1-1]
	\arrow["\simeq", from=2-1, to=2-2]
	\arrow[from=2-2, to=1-2]
\end{tikzcd}\end{align}
\begin{defn}
 For any $\mathfrak{n}\in\mathcal{A}_{cp}$, let $$_c\mathbf{z}_\mathfrak{n}^{\Pi,\psi}\in H^1(K(\mathfrak{n}),V_\Pi^*(-a-1)(\psi_\mathfrak{P}^{-1}))$$ be the image of the class $_cz_\mathfrak{n}^{\Pi,\psi}$ of \emph{\Cref{def: classe over H_n^(p)}}, under the top isomorphism in \eqref{eq: comm diagram}.
 
 If instead $\mathfrak{n}$ is an integral ideal of $K$ coprime to $\mathfrak{f}$ such that $\mathfrak{nf}\in\mathcal{A}_{cp}$, we let $_c\mathbf{z}_\mathfrak{n}^{\Pi,\psi}$ be the image of $_c\mathbf{z}_\mathfrak{nf}^{\Pi,\psi}$ under the Galois corestriction map.
\end{defn}
\begin{thm}\label{thm split norm-relations in Galois}
    Let $c>0$ be an integer coprime to $6N_\Pi N_\mathfrak{f}$. Let $\mathscr{A}$ be the set of integral ideals of $K$ whose norm is coprime to $cpN_\Pi N_\mathfrak{f}$. Then for any $\mathfrak{n},\mathfrak{l}\in\mathscr{A}$ with $\ell=\mathfrak{l\overline{\mathfrak{l}}}\nmid\mathfrak{n}$ a split prime, we have 
    $$\mathrm{cores}^{K(\mathfrak{nl})}_{K(\mathfrak{n})}\left( _c \mathbf{z}_\mathfrak{nl}^{\Pi,\psi} \right)=Q_\mathfrak{l}(\sigma_\mathfrak{l}^{-1})\cdot\ _c \mathbf{z}_\mathfrak{n}^{\Pi,\psi}$$
    where $Q_\mathfrak{l}(X)=\det(1-X\ \mathrm{Frob}_\mathfrak{l}^{-1}|V_\Pi(\psi_\mathfrak{P})(2+a))$, $\mathrm{Frob}_\mathfrak{l}\in\mathrm{Gal}(\overline{K}/K)$ is arithmetic Frobenius and $\sigma_\mathfrak{l}$ is its image in $\mathrm{Gal}(K(\mathfrak{n})/K).$ Moreoever, every such class lies in the image of the cohomology of $T_\Pi^*(-a-1)(\psi_\mathfrak{P}^{-1})$.
    \begin{proof}
        Recall that we are regarding $\psi_\mathfrak{P}$ as a character of $\mathrm{Gal}(\overline{K}/K)$ via the global Artin map normalized in the geometric way. Keeping this in mind, the result follows from \Cref{cor norm-relations one but final}, in the same way as \cite[Theorem $5.3.2$]{Lei_Loeffler_Zerbes_2015}; i.e. the compatibility of the isomorphisms $\nu$ with the norm maps $\mathcal{N}$ given in \eqref{eq: item 4}, and \eqref{eq: comm diagram}.
    \end{proof}
\end{thm}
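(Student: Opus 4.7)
The plan is to combine Corollary \ref{cor norm-relations one but final} with the commutative diagram \eqref{eq: comm diagram} and the patching compatibility recorded in item \eqref{eq: item 4} of the CM-patching summary, along the same lines as \cite[Theorem $5.3.2$]{Lei_Loeffler_Zerbes_2015}. I would first fix a family of isomorphisms $\{\nu_\mathfrak{n}\}$ and unwind the top isomorphism in \eqref{eq: comm diagram}, which by definition identifies $\,_cz_\mathfrak{n}^{\Pi,\psi}$ with $\,_c\mathbf{z}_\mathfrak{n}^{\Pi,\psi}$ for every $\mathfrak{n}\in\mathcal{A}_{cp}$. The crucial input is the compatibility square in \eqref{eq: item 4}: it says precisely that the norm map $\mathcal{N}^\mathfrak{nl}_\mathfrak{n}$ on $H^{1}(\psi,-,L_\mathfrak{P})$ is intertwined under $\nu_\bullet$ with the natural corestriction of induced Galois modules, which by Shapiro's lemma is the geometric corestriction $\mathrm{cores}^{K(\mathfrak{nl})}_{K(\mathfrak{n})}$. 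Pushing the identity of Corollary \ref{cor norm-relations one but final} through this chain yields
$$\mathrm{cores}^{K(\mathfrak{nl})}_{K(\mathfrak{n})}\!\bigl(\,_c\mathbf{z}_\mathfrak{nl}^{\Pi,\psi}\bigr) \;=\; P_\ell\!\bigl([\mathfrak{l}]\psi(\mathfrak{l})\ell^{-2-a}\bigr)\cdot\,_c\mathbf{z}_\mathfrak{n}^{\Pi,\psi},$$
where $[\mathfrak{l}]\in H_\mathfrak{n}^{(p)}$ now operates via its image in $\mathrm{Gal}(K(\mathfrak{n})/K)$.

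The next step is to match the Hecke polynomial factor with $Q_\mathfrak{l}(\sigma_\mathfrak{l}^{-1})$. Under the geometric normalisation of the Artin map fixed in \eqref{eq: diagram of fields}, a uniformiser at $\mathfrak{l}$ maps to $\mathrm{Frob}_\mathfrak{l}^{-1}$, and consequently $[\mathfrak{l}]\mapsto\sigma_\mathfrak{l}^{-1}$ and $\psi_\mathfrak{P}(\sigma_\mathfrak{l}^{-1})=\psi(\mathfrak{l})$. Since $\mathfrak{l}$ is split with $N(\mathfrak{l})=\ell$, one has $\mathrm{Frob}_\mathfrak{l}=\mathrm{Frob}_\ell$ on the inertia-trivial representation $V_\Pi$; combining this with $\chi_\mathrm{cyc}(\mathrm{Frob}_\mathfrak{l})^{2+a}=\ell^{2+a}$ and the definition of $P_\ell$ from Theorem \ref{thm galois rep}, a straightforward manipulation of characteristic polynomials on a twist gives
$$P_\ell\!\bigl([\mathfrak{l}]\psi(\mathfrak{l})\ell^{-2-a}\bigr) \;=\; \det\!\bigl(1-\sigma_\mathfrak{l}^{-1}\mathrm{Frob}_\mathfrak{l}^{-1}\bigm|V_\Pi(\psi_\mathfrak{P})(2+a)\bigr) \;=\; Q_\mathfrak{l}(\sigma_\mathfrak{l}^{-1}),$$
which establishes the norm-relation for all $\mathfrak{n}\in\mathcal{A}_{cp}$.

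The integrality claim follows by reading the identity off the bottom row of \eqref{eq: comm diagram}: Remark \ref{rem integrality} already places $\,_cz_\mathfrak{n}^{\Pi,\psi}$ in the image of the integral cohomology of $T_\Pi^*(-a-1)\otimes_{\mathcal{O}_{L_\mathfrak{P}}}H^{1}(\psi,\mathfrak{n},\mathcal{O}_{L_\mathfrak{P}})$, and the isomorphisms $\nu_\mathfrak{n}$ are set up at the level of $\mathcal{O}_{L_\mathfrak{P}}$-coefficients, so the transported class lies in $H^1(K(\mathfrak{n}),T_\Pi^*(-a-1)(\psi_\mathfrak{P}^{-1}))$. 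Finally, for $\mathfrak{n}\in\mathscr{A}$ not divisible by $\mathfrak{f}$ the class is defined by $\,_c\mathbf{z}_\mathfrak{n}^{\Pi,\psi}:=\mathrm{cores}^{K(\mathfrak{nf})}_{K(\mathfrak{n})}(\,_c\mathbf{z}_\mathfrak{nf}^{\Pi,\psi})$, so applying transitivity of corestriction along $K(\mathfrak{nlf})/K(\mathfrak{nf})/K(\mathfrak{n})$ and $K(\mathfrak{nlf})/K(\mathfrak{nl})/K(\mathfrak{n})$ to the identity at level $\mathfrak{nf}$ promotes the norm-relation to all of $\mathscr{A}$, since $Q_\mathfrak{l}(\sigma_\mathfrak{l}^{-1})$ commutes with corestriction. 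The main bookkeeping obstacle is the polynomial identification: one must carefully align the Tate twist $(-a-1)$ on the coefficient module, the $\ell^{-2-a}$-shift inside $P_\ell$, and the shift $(2+a)$ in the definition of $Q_\mathfrak{l}$; once these exponents are confirmed to fit, the theorem is a formal consequence of the transport of structure built up in Sections \ref{sec norm-relations over ray class groups} and \ref{sec: CM patching}.
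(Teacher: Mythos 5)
Your proposal is correct and follows the same route as the paper's proof: both proceed by transporting Corollary \ref{cor norm-relations one but final} through the patching isomorphisms $\nu_\mathfrak{n}$ via item \eqref{eq: item 4} and diagram \eqref{eq: comm diagram}, matching $\mathcal{N}^{\mathfrak{nl}}_\mathfrak{n}$ with corestriction under Shapiro's lemma and identifying $P_\ell([\mathfrak{l}]\psi(\mathfrak{l})\ell^{-2-a})$ with $Q_\mathfrak{l}(\sigma_\mathfrak{l}^{-1})$ using the geometric normalisation $[\mathfrak{l}]\mapsto\sigma_\mathfrak{l}^{-1}$. You have simply made explicit the Frobenius/twist bookkeeping and the extension from $\mathcal{A}_{cp}$ to $\mathscr{A}$ via transitivity of corestriction, which the paper leaves implicit by citing \cite[Theorem $5.3.2$]{Lei_Loeffler_Zerbes_2015}.
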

\begin{defn}
    We write 
    $$_c\mathbf{z}^{\Pi,\psi}\in H^1(K,V_\Pi^*(-a-1)(\psi_\mathfrak{P}^{-1}))$$ for the image of the class $_c\mathbf{z}_{1}^{\Pi,\psi}$ \emph{(}i.e. $\mathfrak{n}=1$\emph{)} under evaluation at the trivial character of $\mathrm{Gal}(K(1)/K)$ where recall that $K(1)$ denotes the maximal abelian $p$-extension inside the Hilbert class field of $K$.
\end{defn}
\section{Bounding Selmer groups}\label{sec: selmer bound}
From now on, we also assume that the prime $p$ fixed at the beginning of \Cref{sec: mapping to Galois} is odd. Let $T$ be an $\mathcal{O}_{L_\mathfrak{P}}$-linear $\mathrm{Gal}(\overline{K}/K)$-representation  that is unramified at almost all primes of $K$ and let $V=T[1/p]$. Let $v$ be a place of $K$ that is coprime to $p$. Then the Bloch-Kato Selmer subspace $H^1_f(K_v,V)$ is defined as in \cite{bloch1990functions}, by
$$H^1_f(K_v,V):=H^1\left(\mathrm{Gal}\left((K_v)^\mathrm{nr}/K_v\right),V^{I_v}\right)$$
   where $I_v$ is the inertia subgroup of $\mathrm{Gal}(\overline{K_v}/K_v)$. We then define $H^1_f(K_v,T)$ as the pre-image of $K^1_f(K_v,V).$ Let $\Sigma_p$ be the set of places of $K$ above $p$. As in \cite[Definition $1.5.1$]{rubin2000annals}, we set
$$\mathrm{Sel}^{\Sigma_p}(K,T):=\mathrm{ker}\left( H^1(K,T)\rightarrow\bigoplus_{v\not\in \Sigma_p} \frac{H^1(K_v,T)}{H^1_f(K_v,T)}\right)$$
and
$$\mathrm{Sel}_{\Sigma_p}(K,T):=\mathrm{ker}\left(\mathrm{Sel}^{\Sigma_p}(K,T)\rightarrow\bigoplus_{v\in\Sigma_p} H^1(K_v,T)\right).
$$
From now on, to ease the notation, we write 
$$T:=T_\Pi^*(-a-1)(\psi_\mathfrak{P}^{-1})\ \ \&\ \ V:=V_\Pi^*(-a-1)(\psi_\mathfrak{P}^{-1})$$
for the representations from previous sections.
\subsection{Hypotheses}\label{sec hypotheses} We now give a list of hypotheses under which the Euler system machinery of \cite{rubin2000annals} applied to our Euler system norm-relations of \Cref{thm split norm-relations in Galois}, gives strict Selmer bounds.
\begin{enumerate}
\item[(H1)] $\Pi$ has level one, and $k_1>k_2.$
\item[(H2)] $T/\mathfrak{P}T$ is irreducible as a $(\mathcal{O}_{L_\mathfrak{P}}/\mathfrak{P})$-linear representation of $\mathrm{Gal}(\overline{K}/K(1))$.
\item[(H3)] If $F$ denotes the field obtained by adjoining $\mu_{p^\infty}$ to $K(1)$, then there exist elements $\tau,\gamma\in\mathrm{Gal}(\overline{K}/F)$ such that $T/(\tau-1)T$ is free of rank one over $\mathcal{O}_{L_\mathfrak{P}},$ and $T^{\gamma=1}=0.$
\end{enumerate}
\begin{rem}
   Regarding hypothesis (H1); assuming $\Pi$ has level one, the condition on the weights, $k_1>k_2,$ implies that certain $H^1_f$'s are as big as possible, locally away from $p$. In general, this happens quite often. Even for $\Pi$ of an arbitrary level, one should be able to obtain analogous sufficient conditions on the weights. This would require us to use a description (of the real part of the poles) of local $L$-factors, and deep local-global compatibility results due to Arthur, for ramified $\Pi_\ell.$ Of course, more work would be required to make this precise. To keep the argument as simple as possible we stick with the level one case, and we will revisit this in the future when we $p$-adically modify and use our Euler system to bound Bloch-Kato Selmer groups.
\end{rem}
\begin{rem}
    Regarding hypotheses (H2) \& (H3); these are standard ``big image'' assumptions for the representation $V_{\Pi}$, and are expected to be satisfied for almost all primes $p$ if $\Pi$ is “sufficiently general”; see for example \cite{weiss2022images}.
\end{rem}
\subsection{The Selmer bound} We set 
$$W:=T\otimes_{\mathcal{O}_{L_\mathfrak{P}}}(L_\mathfrak{P}/\mathcal{O}_{L_\mathfrak{P}})\ \ \& \ \ W^*:=T^\vee(1)$$
where $T^\vee:=\mathrm{Hom}_{\mathcal{O}_{L_\mathfrak{P}}}(T,L_\mathfrak{P}/\mathcal{O}_{L_\mathfrak{P}})$ is the Pontryagian dual of $T$. We write $\ell_{\mathcal{O}_{L_\mathfrak{P}}}(M)\leq \infty$ for the length of an $\mathcal{O}_{L_\mathfrak{P}}$-module $M$. We write $\mathrm{ind}_{\mathcal{O}_{L_\mathfrak{P}}}(_c \mathbf{z}^{\Pi,\psi}), \mathfrak{n}_W$ and $\mathfrak{n}_W^*$ for the quantities defined in \cite[\S $2$]{rubin2000annals}.
\begin{thm}\label{thm selmer bound}
    Suppose that the class $_c \mathbf{z}^{\Pi,\psi}$ is non-zero and the hypotheses of \emph{\Cref{sec hypotheses}} are satisfied. Then 
    $$\ell_{\mathcal{O}_{L_\mathfrak{P}}}\left(\mathrm{Sel}_{\Sigma_p}(K,T^\vee(1))\right)\leq \mathrm{ind}_{\mathcal{O}_{L_\mathfrak{P}}}(_c \mathbf{z}^{\Pi,\psi})+\mathfrak{n}_W+\mathfrak{n}_W^*<\infty.$$
    In particular, the strict Selmer group $\mathrm{Sel}_{\Sigma_p}(K,T^\vee(1))$ is finite.
    \begin{proof}
        Hypothesis (H2) forces $H^1(K,T)$ to be torsion-free, thus \Cref{thm split norm-relations in Galois} holds integrally. So we have obtained Euler system norm-relations for $T$, from $K(\mathfrak{nl})$ to $K(\mathfrak{n})$ for all $\mathfrak{n},\mathfrak{l}\in\mathscr{A}$ with $\ell=\mathfrak{l\overline{\mathfrak{l}}}\nmid\mathfrak{n}$ a split prime. In our case of $K/\mathbf{Q}$ imaginary quadratic, this gives us a large enough supply of primes to carry out Rubin's Euler system argument. However, we need to use the modified version of \cite[Theorem $2.2$]{rubin2000annals} since the composite of all the $K(\mathfrak{n})$ does not contain a $\mathbf{Z}_p$-extension. More precisely, we need to verify that both Hyp$(K,T)$ of \cite[\S $2$]{rubin2000annals}, and condition (ii')(b) of \cite[$\S 9.1$]{rubin2000annals} are satisfied. The former is satisfied automatically by our hypotheses (H2) and (H3). The supply of an element $\gamma$ as in \cite[$\S 9.1$]{rubin2000annals} is included in (H3), and the assumption $T^{\mathrm{Gal}(\overline{K}/K(1))}=0$ follows by (H2) and Nakayama's lemma. The final thing we need to check to fully verify condition (ii')(b) of \cite[$\S 9.1$]{rubin2000annals}, is that our classes $_c\mathbf{z}_\mathfrak{n}^{\Pi,\psi}$ actually lie in $\mathrm{Sel}^{\Sigma_p}(K(\mathfrak{n}),T)$ for every $\mathfrak{n}\in\mathscr{A}$. We now give the details on how to deduce this from hypothesis (H1). 
        
        Let $\mathfrak{n}\in\mathscr{A}$. For starters, it is enough to show that $H^1(K(\mathfrak{n})_{v_\mathfrak{n}},T)=H^1_f(K(\mathfrak{n})_{v_\mathfrak{n}},T)$ for any prime $v_\mathfrak{n}\nmid p$ of $K(\mathfrak{n}).$ Since $H^1_f$ contains torsion, it is enough to show that $H^1(K(\mathfrak{n})_{v_\mathfrak{n}},V)$ vanishes. Using local Tate duality and the fact that $v_\mathfrak{n}\nmid p$, it is enough to show that both $H^1_f(K(\mathfrak{n})_{v_\mathfrak{n}},V)$ and $H^1_f(K(\mathfrak{n})_{v_\mathfrak{n}},V^*(1))$ vanish. These two are isomorphic to $V^{I_{v_\mathfrak{n}}}/(\mathrm{Frob}_{v_\mathfrak{n}}-1)V^{I_{v_\mathfrak{n}}}$ and $V^*(1)^{I_{v_\mathfrak{n}}}/(\mathrm{Frob}_{v_\mathfrak{n}}-1)V^*(1)^{I_{v_\mathfrak{n}}}$ respectively, where $\mathrm{Frob}$ will always denote arithmetic Frobenius, and $I$ the inertia group. Since $\Pi$ has level one, we have that 
        \begin{align}\label{eq: spaces}V^{I_{v_\mathfrak{n}}}=V_\Pi^*(-a-1)\otimes(\psi_\mathfrak{P}^{-1})^{I_{v_\mathfrak{n}}}\ \ \& \ \ V^*(1)^{I_{v_\mathfrak{n}}}=V_\Pi(a+2)\otimes (\psi_\mathfrak{P})^{I_{v_\mathfrak{n}}}.\end{align}
        We write $v$ be the prime of $K$ below $v_\mathfrak{n}$ and $\ell$ for the rational prime below $v$. Recall that $a=k_2-3.$
        
        Firstly, suppose that $v|\mathfrak{f}$; i.e. $\psi_\mathfrak{P}$ is ramified at $v$. By assumption $\mathfrak{n}\in\mathscr{A}$ which means its norm is coprime to $cpN_\Pi N_\mathfrak{f}$. In particular, $v$ is unramified in $K(\mathfrak{n})$, and hence $(\psi_\mathfrak{P})^{I_{v_\mathfrak{n}}}=(\psi_\mathfrak{P})^{I_{v}}=0$. Thus, both spaces in \eqref{eq: spaces} are zero and we are done.

        Now, we suppose that $v\nmid\mathfrak{f}$. In this case $(\psi_\mathfrak{P})^{I_{v_\mathfrak{n}}}=\psi_\mathfrak{P}$, and both spaces in \eqref{eq: spaces} are simply $V$ and $V^*(1)$ respectively. Thus, it now suffices to show that $1$ is not an eigenvalue of $\mathrm{Frob}_{v_\mathfrak{n}}$ acting on $V$ or $V^*(1).$ To do this, it is enough to study the eigenvalues of $\mathrm{Frob}_v$ on these two representations. We \textit{fix} an isomorphism $\mathbf{C}\overset{\iota}{\simeq}\overline{\mathbf{Q}}_p$ extending the embedding $L\hookrightarrow \overline{\mathbf{Q}}_p$ in \eqref{eq: diagram of fields} which was used to define $\psi_\mathfrak{P}$. Using \Cref{thm galois rep}$(4)$ and the fact that $|\psi(v)|=N_{K/\mathbf{Q}}(v)^{1/2}$, we can calculate that the absolute value under $\iota$ of the eigenvalues of $\mathrm{Frob}_v$ acting on $V$ and $V^*(1)$ is given by $N_{K/\mathbf{Q}}(v)^{(k_1-k_2+2)/2}$ and $N_{K/\mathbf{Q}}(v)^{(k_2-k_1)/2}$ respectively. But since $k_1>k_2$ by assumption, these are never equal to one. This is enough to conclude.
        
    \end{proof}
\end{thm}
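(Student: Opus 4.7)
The plan is to apply the Euler system machinery of Rubin \cite{rubin2000annals} to the norm-compatible system $\{_c\mathbf{z}_\mathfrak{n}^{\Pi,\psi}\}_{\mathfrak{n}\in\mathscr{A}}$ supplied by \Cref{thm split norm-relations in Galois}. Since $K/\mathbf{Q}$ is imaginary quadratic and the norm-relations are only available at primes of $K$ that split over $\mathbf{Q}$, the classical bound of \cite[Theorem $2.2$]{rubin2000annals} does not directly apply: the compositum of the $K(\mathfrak{n})$ for $\mathfrak{n}\in\mathscr{A}$ need not contain a $\mathbf{Z}_p$-extension. Instead, I would invoke the modified version in \cite[\S $9.1$]{rubin2000annals}, which was designed precisely to handle this split-prime-only setting. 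The two things one must supply are (a) Rubin's Hyp$(K,T)$ and (b) condition (ii')(b) of \emph{loc.\ cit.}, namely that the Euler system classes land in the strict Selmer group $\mathrm{Sel}^{\Sigma_p}(K(\mathfrak{n}),T)$.

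First, hypothesis (H2) forces $H^1(K,T)$ to be torsion-free (standard Nakayama/irreducibility argument on the residual representation), which guarantees that the norm-relations, a priori stated with coefficients in $V$, actually hold in the cohomology of the lattice $T$. Hypothesis (H3) directly supplies the element $\gamma\in\mathrm{Gal}(\overline{K}/F)$ with $T^{\gamma=1}=0$ and the element $\tau$ with $T/(\tau-1)T$ free of rank one over $\mathcal{O}_{L_\mathfrak{P}}$, so Hyp$(K,T)$ is verified. The remaining piece of condition (ii')(b), namely $T^{\mathrm{Gal}(\overline{K}/K(1))}=0$, follows from (H2) and Nakayama.

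The hard part will be (b) — the local condition at places not above $p$ — and this is where hypothesis (H1) enters. Fix $\mathfrak{n}\in\mathscr{A}$ and a place $v_\mathfrak{n}$ of $K(\mathfrak{n})$ above a rational prime $\ell\neq p$; it suffices to prove $H^1(K(\mathfrak{n})_{v_\mathfrak{n}},V)=0$, since $H^1_f$ absorbs torsion. By the local Euler characteristic formula at primes away from $p$, together with local Tate duality, this reduces to showing that both $V^{I_{v_\mathfrak{n}}}/(\mathrm{Frob}_{v_\mathfrak{n}}-1)$ and $V^*(1)^{I_{v_\mathfrak{n}}}/(\mathrm{Frob}_{v_\mathfrak{n}}-1)$ vanish. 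Because $\Pi$ has level one, $V_\Pi$ is everywhere unramified away from $p$, so the inertia action comes entirely from the Grossencharacter twist $\psi_\mathfrak{P}^{\pm 1}$. If $v:=v_\mathfrak{n}\cap\mathcal{O}_K$ divides $\mathfrak{f}$, then $v$ is unramified in $K(\mathfrak{n})$ (since $\mathrm{N}(\mathfrak{n})$ is coprime to $N_\mathfrak{f}$), so $\psi_\mathfrak{P}$ remains ramified at $v_\mathfrak{n}$ and the inertia invariants vanish outright.

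The subtle case is $v\nmid\mathfrak{f}$, where the inertia invariants are everything and one must show that $1$ is not a Frobenius eigenvalue on $V$ or $V^*(1)$. Here I would pass to the prime $v$ of $K$ and use \Cref{thm galois rep}$(4)$ together with the fact that $\psi$ has infinity-type $(-1,0)$, so $|\psi(v)|=\mathrm{N}_{K/\mathbf{Q}}(v)^{1/2}$: a direct calculation (after fixing an isomorphism $\mathbf{C}\simeq\overline{\mathbf{Q}}_p$ compatible with \eqref{eq: diagram of fields}) shows the Frobenius eigenvalues on $V$ and $V^*(1)$ have absolute values $\mathrm{N}_{K/\mathbf{Q}}(v)^{(k_1-k_2+2)/2}$ and $\mathrm{N}_{K/\mathbf{Q}}(v)^{(k_2-k_1)/2}$ respectively. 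The condition $k_1>k_2$ in (H1) makes both of these strictly different from $1$, which is precisely what forces the local cohomology to vanish and the classes to lie in $\mathrm{Sel}^{\Sigma_p}$. Once all these ingredients are in place, \cite[Theorem $2.2$ + \S $9.1$]{rubin2000annals} directly produces the stated length bound, and finiteness of $\mathrm{Sel}_{\Sigma_p}(K,T^\vee(1))$ follows from non-vanishing of $_c\mathbf{z}^{\Pi,\psi}$.
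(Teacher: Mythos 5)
Your proposal is correct and follows essentially the same argument as the paper: invoke Rubin's modified Theorem 2.2 via \S 9.1 of \cite{rubin2000annals}, verify Hyp$(K,T)$ from (H2) and (H3), use (H2) and Nakayama for torsion-freeness and $T^{\mathrm{Gal}(\overline{K}/K(1))}=0$, and reduce condition (ii')(b) at places $v_\mathfrak{n}\nmid p$ to vanishing of inertia-coinvariants of Frobenius on $V$ and $V^*(1)$, handled by the two cases $v\mid\mathfrak{f}$ (ramification of $\psi_\mathfrak{P}$) and $v\nmid\mathfrak{f}$ (Weil-weight computation using \Cref{thm galois rep}(4), $|\psi(v)|=N(v)^{1/2}$, and $k_1>k_2$). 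The only superficial difference is your explicit mention of the local Euler characteristic formula alongside Tate duality, which the paper folds into the same sentence.
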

\bibliography{citation} 
\bibliographystyle{alpha}

\noindent\textit{Mathematics Institute, Zeeman Building, University of Warwick, Coventry CV4 7AL,
England}.\\
\textit{Email address}: Alexandros.Groutides@warwick.ac.uk
\end{document}